\DeclareMathAlphabet{\mathpzc}{OT1}{pzc}{m}{it}
\DeclareMathOperator{\conv}{Conv}
\DeclareMathOperator{\Ham}{Ham}
\DeclareMathOperator{\osc}{Osc}
\newcommand{\Mane}{Mañé }
\def\XXint#1#2#3{{\setbox0=\hbox{$#1{#2#3}{\int}$ }
\vcenter{\hbox{$#2#3$ }}\kern-.6\wd0}}
\newtheorem{theo}{Theorem}[section]
\newtheorem{prop}[theo]{Proposition}
\newtheorem{lem}[theo]{Lemma}
\newtheorem{cor}[theo]{Corollary}
\theoremstyle{definition}
\newtheorem{defi}[theo]{Definition}
\newtheorem{rem}[theo]{Remark}
\newtheorem{quest}{Question}
\theoremstyle{remark}
\numberwithin{equation}{section}
\title{A Multidimensional Birkhoff Theorem for Recurrent Lagrangian Submanifolds by a Tonelli Hamiltonian}
\author{Skander Charfi}
\date{}
\begin{document}

\maketitle

\begin{abstract}
	Consider a closed manifold $M$ and a time-periodic Tonelli Hamiltonian $H : \mathbb{R}/\mathbb{Z} \times T^*M \to \mathbb{R}$ with flow $\phi_H$. Let $\mathcal{L} \subset T^*M$ be a Lagrangian submanifold Hamiltonianly isotopic to the zero section. We prove that if $\phi_H^n(\mathcal{L})$ admits convergent subsequences in both positive and negative times, in the Hausdorff topology and with control on the Liouville primitives, to two Lagrangian submanifolds, then $\mathcal{L}$ is a graph over the zero section $0_{T^*M}$ of $T^*M$. Furthermore, we show that $\mathcal{L}$ is recurrent in both positive and negative times for the same type of convergence.
\end{abstract}

\tableofcontents

\newpage

\addcontentsline{toc}{section}{Introduction}
\section*{Introduction}

\subsection{Overview}

A theorem attributed to G. D. Birkhoff \cite{MR1555175} establishes that any non contractible invariant curve, which is preserved under an exact twist map of the annulus, is a Lipschitz graph over the circle. This theorem has inspired numerous efforts to establish various modern proofs. We can refer for example to \cite{MR0728564,MR1476996,MR1659220}.\\

Since then, attempts were made to extend the result to higher dimensions. Under certain assumptions, the authors have managed to demonstrate that for a convex Hamiltonian of a cotangent bundle or a multidimensional positive twist map, an invariant exact Lagrangian submanifold is a graph.\\

	For results on the multidimensional torus $\mathbb{T}^n$ with various conditions on the invariant Lagrangian, we can cite the works of Herman \cite{MR1067380}, Bialy and Polterovich \cite{MR1157317}, and Carneiro and Ruggiero \cite{MR4597700}. On general manifolds $M$, Arnaud \cite{MR2738994} proved that if a Lagrangian submanifold of $T^*M$, which is Hamiltonianly isotopic to the zero section, is fixed by an autonomous Tonelli Hamiltonian map, then it is a Lipschitz graph over the base manifold $M$. Later, Bernard and dos Santos \cite{MR2860422} extended this result to the case of Lipschitz Lagrangian submanifolds, and Amorim, Oh, and dos Santos \cite{MR3860396} dropped the Hamiltonian isotopy condition, proving the theorem for exact-Lipschitz Lagrangian submanifolds using generalized graph selectors based on Floer theory.\\

In the non-autonomous setting, Arnaud and Venturelli \cite{MR3674224} showed that the result of \cite{MR2738994} still holds. It is established that the periodic Lagrangian submanifold is the graph of some $du$, where $u$ is a weak-KAM solution of the Hamilton-Jacobi equation. This finding has been the starting point of this paper, suggesting a correspondence between larger sets of Lagrangian submanifolds (Hamiltonianly isotopic the the zero section) and solutions of the Hamilton-Jacobi equation.\\  

Periodic Lagrangian submanifolds are replaced by Lagrangian submanifolds $\mathcal{L}$ which images under the action of a Tonelli Hamiltonian flow $\phi_H$ have convergent subsequences in both positive and negative times, for a type of convergence called reduced complexity convergence (see Definition \ref{ReducedComplexity}). In other words, we require that two subsequences of $\phi_H^n(\mathcal{L})$ and of $\phi_H^{-n}(\mathcal{L})$ converge in the Hausdorff topology to another Lagrangian submanifold, with a control on the Liouville primitive to prevent winding phenomena.\\

In addition, the author noted in \cite{Representation} that bounded global viscosity solutions of the Hamilton-Jacobi equation are recurrent and share the same properties as weak-KAM solutions, particularly those used to prove the Birkhoff theorem in \cite{MR3674224}.\\

As a result, it is established in this paper that Lagrangian submanifolds that converge with reduced complexity in both positive and negative times to some limit Lagrangian subamnifolds are graphs of $du$, where $u$ is a bounded viscosity solution of the Hamilton-Jacobi equation. Consequently, it is shown that these submanifolds are recurrent under the action of the Hamiltonian flow, meaning that $\phi_H^n(\mathcal{L})$ has a subsequence that converges, with reduced complexity, in both positive and negative times, to the initial Lagrangian $\mathcal{L}$.\\

One difficulty is to define the appropriate topology for the recurrence of Lagrangian submanifolds in order to obtain the desired result. If the chosen topology is too lenient, it might permit the Lagrangians to wind around their limit, leading to a failure of the graph property. It will be shown that controlling the stretching of the submanifolds will suffice. See Definition \ref{ReducedComplexity} and Theorem \ref{MainTheorem}.\\
 
Another difficulty is the construction of concrete examples of recurrent Lagrangian submanifolds under Tonelli Hamiltonian maps. This complicates the construction of counterexamples for loose topologies where recurrence does not necessarily imply a Birkhoff result. However, to demonstrate that the framework of this paper is non-empty, the author constructed an example of a recurrent, non-periodic $C^1$ Lagrangian submanifold that is the graph of a recurrent viscosity solution \cite{Recurrent}.\\

To prove the main result, we rely heavily on the weak KAM theory, which was developed by A. Fathi in the 1990s \cite{MR1451248,fathi2008weak}. However, it is important to note that we deliberately refrained from relying on the entire theory, including concepts like the Aubry sets and weak KAM solutions. This deliberate choice ensures that our article stands as a self-contained work. An exception will be made for Corollary \ref{MainCor2}, the proof of which requires the properties of the Lax-Oleinik operators briefly discussed in Appendix \ref{AppendixLO}.\\

Additionally, a crucial element of our proof involves the use of a graph selector. These allow us to choose a pseudograph (a type of discontinuous exact Lagrangian graph) within the initial Lagrangian submanifold. The concept of graph selectors was first introduced by M. Chaperon \cite{MR1094198,MR2025275}. For our purposes, we will adopt the construction given by C. Viterbo in \cite{MR1604386} relying on spectral invariants defined from generating functions (see \cite{MR1157321} or \cite{humil2008} Chapter 1). For the sake of completeness, we have chosen to reprove the vast majority of the properties of these invariants by adapting them to the framework of this paper.

\subsection{Notations and Main Result}

Fix a closed manifold $M$ of dimension $d$ endowed with a Riemannian metric $d$ and its relative norm $\Vert .\Vert $ on the cotangent bundle. The cotangent bundle $T^*M$ has a natural exact symplectic structure $(T^*M, \omega = -d\lambda)$ defined as follows. Let $\pi_M : T^*M \to M$ be the natural projection and denote by $(q,p)=(q_1,..,q_d,p_1,..,p_d)$ the coordinates in $T^*M$ where $q = (q_1,..,q_d)$ are local coordinates of $M$ and $p = (p_1, .., p_d)$ are fiberwise coordinates with respect to the cotangent vectors $dq_1,..,dq_d$. The \textit{Liouville form} $\lambda$ is defined by $\lambda(q,p) = p \circ d\pi_M = p.dq$.

A \textit{Lagrangian submanifold} $\mathcal{L}$ of $T^*M$ is a submanifold such that $\dim \mathcal{L} =d$ and $\omega_{|T \mathcal{L}} =0$. The Lagrangian submanifold $\mathcal{L}$ is \textit{exact} if $\lambda_{|T \mathcal{L}}$ is an exact form i.e there exist a \textit{Liouville primitive} $h : \mathcal{L} \to \mathbb{R}$ such that $\lambda_{|\mathcal{L}} = dh$. We define the \textit{oscillation} of $h$ as
\begin{equation}
	\osc(h) = \max h - \min h
\end{equation}

The \textit{Hausdorff distance} $d_H$ on the set of compact Lagrangian submanifolds of $T^*M$ is defined for all such $\mathcal{L}$ and $\mathcal{L}'$ as
\begin{equation}
	d_H(\mathcal{L}, \mathcal{L}') = \max \Big\{ \sup_{x' \in \mathcal{L}'} d(x', \mathcal{L}) \; , \; \sup_{x \in \mathcal{L}} d(x, \mathcal{L}') \Big\} 
\end{equation}

Set $\mathbb{T}^1 = \mathbb{R}/ \mathbb{Z}$ and denote by $t$ the time coordinate in $\mathbb{T}^1$. A $C^2$ time-periodic \textit{Hamiltonian} is a map $H : \mathbb{T}^1 \times T^*M \to \mathbb{R}$. Given $H$, the \textit{Hamiltonian vector field} $X_H$ is defined by the equation $\iota_{X^t_H} \omega = \omega( X^t_H, \cdot ) = dH_t$ where $H_t = H(t,\cdot,\cdot)$ and the corresponding \textit{Hamiltonian flow} is denoted by $\phi_H^{s,t}$. We set $\phi_H^t := \phi_H^{0,t}$. The \textit{Hamiltonian group} $\Ham(T^*M, \omega)$ is the group of Hamiltonian maps i.e time one of Hamiltonian flows.

\begin{defi} \label{Tonelli}
	 A $C^2$ time-periodic Hamiltonian $H(t,q,p) : \mathbb{T}^1 \times T^*M \to \mathbb{R}$ is called \textit{Tonelli} if it satisfies the following classical hypotheses:
	\begin{itemize}
		\item (\textit{Strict convexity}) $\partial_{pp}H(t,q,p)>0$ for all $(t,q,p) \in \mathbb{T}^1 \times T^*M$.
		\item (\textit{Superlinearity}) $\frac{|H(t,q,p)|}{\Vert p\Vert } \to \infty$ as $\Vert p\Vert  \to \infty$ for each $(t,q) \in \mathbb{T}^1 \times M$.
		\item (\textit{Completeness}) The Hamiltonian vector field $X_H$ and hence its flow $\phi_H^{s,t}$ are complete in the sense that the flow curves are defined for all times $t \in \mathbb{R}$.
	\end{itemize}
\end{defi}

Tonelli Hamiltonians are the good setting to have a correspondence between Hamiltonian and Lagrangian dynamics. This is precisely the framework of Fathi's weak KAM theory. A flavour of this will be given in Section \ref{Proofsection} and Appendix \ref{AppendixLO}. For a more detailed exposition on the subject, we refer to \cite{fathi2008weak}.\\

We now fix a Lagrangian submanifold $\mathcal{L}$ that is \textit{Hamiltonianly isotopic} or \textit{$H$-isotopic} to the zero section $0_{T^*M}$, that is, there exists a Hamiltonian map $\varphi \in \Ham(T^*M,\omega)$ such that $\mathcal{L} = \varphi (0_{T^*M})$. For all time $t$ in $\mathbb{R}$, we set $\mathcal{L}_t = \phi_H^{t}(\mathcal{L})$ and $\mathcal{L}'_t = \varphi^{-1}(\mathcal{L}_t)$.

\begin{defi} \label{ReducedComplexity}
	Let $(\mathcal{L}_n)_{n \geq 0}$ and $\mathcal{L}$ be a exact Lagrangian submanifolds of $T^*M$ $H$-isotopic to the null section $0_{T^*M}$. We say that the sequence $(\mathcal{L}_n)_{n \geq 0}$ \textit{converges with reduced complexity} if
	\begin{enumerate}[label = \roman*.]
		\item $\lim\limits_n d_H(\mathcal{L}_n , \mathcal{L}) =0$.
		\item For a Hamiltonian map $\varphi$ such that $\mathcal{L} = \varphi ( 0_{T^*M})$,  if $l_n$ is a Liouville primitive on the Lagrangian submanifold $\varphi^{-1} ( \mathcal{L}_n)$, then $\lim\limits_n \osc (l_n) =0$.
	\end{enumerate}
\end{defi}

\begin{prop} \label{ReducedComplexityProp}
	The definition of reduced complexity convergence does not depend on the choice of the Hamiltonian map $\varphi$ such that $\mathcal{L} = \varphi(0_{T^*M})$.
\end{prop}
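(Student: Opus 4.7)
The plan is to exploit the group structure of Hamiltonian diffeomorphisms. Given two Hamiltonian maps $\varphi_1, \varphi_2 \in \Ham(T^*M,\omega)$ with $\varphi_i(0_{T^*M}) = \mathcal{L}$, the composition $\psi := \varphi_2^{-1} \circ \varphi_1$ is again a Hamiltonian map and preserves the zero section. Writing $\mathcal{N}_n := \varphi_1^{-1}(\mathcal{L}_n)$ and $\mathcal{N}'_n := \varphi_2^{-1}(\mathcal{L}_n)$, one has $\mathcal{N}'_n = \psi(\mathcal{N}_n)$, so the question reduces to proving that if $\osc(l_n) \to 0$ for Liouville primitives $l_n$ on $\mathcal{N}_n$, the same holds for Liouville primitives on $\psi(\mathcal{N}_n)$.

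Since $\psi$ is Hamiltonian, it is exact symplectic: there exists $F \in C^\infty(T^*M, \mathbb{R})$ with $\psi^*\lambda - \lambda = dF$ (one may take $F$ to be a primitive along a generating isotopy). The first key observation is that $F$ is constant on $0_{T^*M}$. Indeed $\lambda$ vanishes on $T(0_{T^*M})$, and since $\psi(0_{T^*M}) = 0_{T^*M}$ the differential $d\psi$ preserves $T(0_{T^*M})$, so $\psi^*\lambda$ also vanishes there; hence $dF|_{T(0_{T^*M})} = 0$ and connectedness of $M$ yields $F \equiv c$ on $0_{T^*M}$ for some constant $c$.

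A direct computation using $\psi^*\lambda = \lambda + dF$ then shows that if $l_n$ is a Liouville primitive on $\mathcal{N}_n$, the function $l'_n := (l_n + F|_{\mathcal{N}_n}) \circ \psi^{-1}$ is a Liouville primitive on $\psi(\mathcal{N}_n)$, whence
\[
\osc(l'_n) \leq \osc(l_n) + \osc(F|_{\mathcal{N}_n}).
\]
By hypothesis $\osc(l_n) \to 0$, so it suffices to control the second term.

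To finish, I would use that $\varphi_1^{-1}$ is Lipschitz on a compact neighborhood of $\mathcal{L}$, so condition (i) of Definition \ref{ReducedComplexity} gives $d_H(\mathcal{N}_n, 0_{T^*M}) \to 0$; uniform continuity of $F$ on a compact neighborhood of $0_{T^*M}$, together with $F \equiv c$ on the zero section, then yields $\sup_{\mathcal{N}_n} |F - c| \to 0$ and hence $\osc(F|_{\mathcal{N}_n}) \to 0$. The main subtle point I expect is the rigorous verification that $F$ is constant on $0_{T^*M}$; once that is established, the rest is essentially the transformation law for Liouville primitives under Hamiltonian isotopies.
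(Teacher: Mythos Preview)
Your proposal is correct and follows essentially the same approach as the paper's proof: both reduce to the transformation law for Liouville primitives under an exact symplectomorphism preserving the zero section, observe that the action primitive $F$ (the paper's $g$) is constant on $0_{T^*M}$, and conclude via Hausdorff convergence together with uniform continuity of $F$. The only cosmetic difference is that the paper isolates the transformation law as a separate lemma (Lemma~\ref{PrimitiveChange}) phrased in terms of $f^{-1}$ rather than $f$, whereas you carry out the computation directly with the forward map.
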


\begin{rem}
	\begin{enumerate}
		\item The idea of this convergence is to constraint the winding of the Lagrangian submanifolds around the limit points. More precisely, if $h_n$ and $h_\omega$ are Liouville primitives on $\mathcal{L}_n$ and $\mathcal{L}_\omega$, we aim to control the oscillation of $"h_n - h_\omega"$. However, the two primitives have different definition domains and we need to make a change of variable in order to reduce $h_\omega$ into a constant primitive $l_\omega$ of $\varphi_\omega^{-1}(\mathcal{L}_\omega) = 0_{T^*M}$.
		\item Comparing the definition to other types of convergence, we have
		\begin{enumerate}
			\item The $C^1$ convergence of $(\mathcal{L}_n)_n$ to $\mathcal{L}_\omega$ implies reduced complexity convergence.
			\item Reduced complexity convergence implies spectral convergence by the mean of the spectral distance $\gamma$ introduced by C.Viterbo in \cite{MR1157321}. More precisely, if $(\mathcal{L_n}_n)_{n\geq 0}$ converges with reduced complexity to $\mathcal{L}_\omega$, then $\lim_n \gamma(\mathcal{L}_n, \mathcal{L}_\omega) =0$. However the converse is false since spectral convergence allows non-boundedness of $\osc (l_n)$.
		\end{enumerate}				
	\end{enumerate}
\end{rem}

We state the main theorem of the paper.

\begin{theo}  \label{MainTheorem}
	Let $M$ be a closed manifold, $H : \mathbb{T}^1 \times T^*M \to \mathbb{R}$ be a Tonelli Hamiltonian with flow $\phi_H$ and let $\mathcal{L}$ be a Lagrangian submanifold of $T^*M$ which is $H$-isotopic to the zero section. For all time $t \in \mathbb{R}$, we set $\mathcal{L}_t = \phi_H^t(\mathcal{L})$. Suppose that there exist two Lagrangian submanifolds $\mathcal{L}_\omega$ and $\mathcal{L}_\alpha$ $H$-isotopic to the zero section, and two increasing sequences of integers $n_k$ and $m_k$ such that $(\mathcal{L}_{n_k})_{k \geq 0}$ and $(\mathcal{L}_{-m_k})_{k \geq 0}$ converge with reduced complexity to $\mathcal{L}_\omega$ and $\mathcal{L}_\alpha$ respectively. Then $\mathcal{L}$ and all its images $\mathcal{L}_t$ are $C^1$ graphs over the zero section $0_{T^*M}$ of $T^*M$.
\end{theo}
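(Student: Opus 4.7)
The plan is to adapt the Arnaud--Venturelli strategy from the periodic setting of \cite{MR3674224} to this asymptotic setting. The broad idea is that two-sided reduced complexity convergence forces a graph selector of $\mathcal{L}$ to be a bounded global viscosity solution of the Hamilton--Jacobi equation associated with $H$; the $C^1$ graph property for $\mathcal{L}$ and all its iterates then follows from the regularity theory for viscosity solutions of Tonelli Hamilton--Jacobi equations.

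More precisely, I would first invoke Viterbo's construction of spectral selectors from generating functions (which the paper plans to reprove in detail) to produce, for each $t \in \mathbb{R}$, a Lipschitz function $u_t : M \to \mathbb{R}$ such that $(q, du_t(q)) \in \mathcal{L}_t$ at each point of differentiability of $u_t$. The compatibility of these selectors with the Lax--Oleinik operator, $u_{s+t} = T^{s,s+t} u_s$ up to additive constants, is a standard feature of the Tonelli setting which one extracts from Appendix \ref{AppendixLO}.

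Next, I would translate the reduced complexity convergence into an analytic statement about these selectors: since $\osc$ of a Liouville primitive of $\varphi^{-1}(\mathcal{L}_{n_k})$ tends to $0$, and by Proposition \ref{ReducedComplexityProp} this control is independent of the choice of $\varphi$, one can choose additive normalisations so that $u_{n_k} \to u_\omega$ and $u_{-m_k} \to u_\alpha$ uniformly on $M$. The forward and backward Lax--Oleinik iterates $T^{0,n_k} u_0$ and $T^{0,-m_k} u_0$ then stay uniformly bounded in $C^0$ modulo additive constants. Combining this two-sided boundedness with the recurrence properties of bounded viscosity solutions established in \cite{Representation} identifies $u_0$, up to an additive constant, with a bounded global viscosity solution of the Hamilton--Jacobi equation. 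By the classical weak KAM regularity, such a solution is $C^1$ on the projection of the orbit of $\mathcal{L}$, and its graph is a Lipschitz $d$-dimensional submanifold of $\mathcal{L}$. A clopen argument in $\mathcal{L}$, reminiscent of \cite{MR2738994}, then forces $\mathcal{L} = \mathrm{graph}(du_0)$, and repeating the argument at time $t$ gives the conclusion for every $\mathcal{L}_t$.

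The main obstacle I expect is in the second step, namely the calibration of the additive constants so that the Lax--Oleinik iterates of $u_0$ stay uniformly $C^0$-bounded rather than just being weakly convergent in some spectral sense. This requires a careful comparison between the intrinsic Liouville primitive on $\mathcal{L}_{n_k}$, the pullback primitive on $\varphi^{-1}(\mathcal{L}_{n_k})$ which is the one controlled by the hypothesis, and the additive constant arising from the spectral selector construction. Once this calibration is achieved, the passage from bounded Lax--Oleinik orbits to viscosity solutions, and from viscosity solutions to $C^1$ Lagrangian graphs, follows from a standard adaptation of the weak KAM material to the non-autonomous setting.
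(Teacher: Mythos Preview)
Your outline has the right ingredients but inverts the logical order at the crucial point, and this hides a genuine gap. You claim that the compatibility $u_{s+t} = T^{s,s+t} u_s$ (up to constants) is ``a standard feature of the Tonelli setting which one extracts from Appendix \ref{AppendixLO}.'' It is not: for a graph selector of an arbitrary Lagrangian $H$-isotopic to the zero section one only has the \emph{domination} inequality $u_t \leq T^{s,t} u_s$ (this is the paper's Proposition \ref{Domination}). Equality is precisely the statement that every Hamiltonian curve $q(\tau) = \pi \circ \phi_H^\tau(x)$, $x \in \mathcal{L}$, is $u$-calibrated, and proving \emph{that} is the whole content of Section \ref{Proofsection}. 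If you assume this compatibility, the rest of your sketch is indeed routine---but then you have assumed the theorem.

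The paper's route is therefore structurally different from what you describe. It does \emph{not} first identify $u$ as a bounded viscosity solution and then appeal to regularity theory plus a clopen argument. Instead it proves calibration directly, in two stages. First (Proposition \ref{Calib}) the reduced-complexity hypothesis is used to show that the \emph{limit} curves $q_\omega(\tau)$ and $q_\alpha(\tau)$ are calibrated by $u_\omega$ and $u_\alpha$: here the control on $\osc(l_k^\omega)$ is exactly what forces the telescoping defect $\delta(u_k,q_{k|[a,b_k]})$ to vanish in the limit. Second (Proposition \ref{Calibration} and Lemma \ref{ConvexityLemma}) this limit-curve calibration, together with Fathi's differentiability result (Proposition \ref{Fathi}) and a strict-convexity argument on the level set $\{\mathscr{H}=0\}$, is leveraged to show that the defect $\delta(u,q_{|[-m_k,n_k]})$ also vanishes for \emph{every} orbit in $\mathscr{L}$. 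The graph property then follows immediately from Proposition \ref{Fathi}, with no clopen argument needed; the viscosity-solution interpretation appears only afterwards, in the proof of Corollary \ref{MainCor2}.
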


In particular, this Theorem applies for reccurent Lagrangian submanifolds with a control on the Liouville primitives.

\begin{cor} \label{MainCorRecu}
	Let $M$ be a closed manifold, $H : \mathbb{T}^1 \times T^*M \to \mathbb{R}$ be a Tonelli Hamiltonian with flow $\phi_H$ and let $\mathcal{L}$ be a Lagrangian submanifold of $T^*M$ which is $H$-isotopic to the zero section. If $\mathcal{L}$ is a positively and negatively time recurrent Lagrangian for the reduced complexity convergence, meaning that if there exist increasing sequences of integers $n_k$ and $m_k$ such that $(\mathcal{L}_{n_k})_k$ and $(\mathcal{L}_{-m_k})_k$ converge with reduced complexity to $\mathcal{L}$, then $\mathcal{L}$ and all its images $\mathcal{L}_t$ are $C^1$ graphs over the zero section $0_{T^*M}$ of $T^*M$. 
\end{cor}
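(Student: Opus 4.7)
The plan is to observe that this corollary is a direct specialization of Theorem \ref{MainTheorem}, obtained by taking $\mathcal{L}_\omega = \mathcal{L}_\alpha = \mathcal{L}$. I would begin by verifying that the hypotheses of the main theorem are all satisfied under the recurrence assumption: the ambient manifold $M$ is closed and $H$ is Tonelli by assumption; the Lagrangian $\mathcal{L}$ is $H$-isotopic to the zero section by hypothesis, so the two candidate limit Lagrangians $\mathcal{L}_\omega := \mathcal{L}$ and $\mathcal{L}_\alpha := \mathcal{L}$ are trivially $H$-isotopic to the zero section as well. The recurrence hypothesis provides precisely the increasing sequences $(n_k)_k$ and $(m_k)_k$ of positive integers such that $(\mathcal{L}_{n_k})_k$ and $(\mathcal{L}_{-m_k})_k$ converge with reduced complexity to $\mathcal{L}$, which are now interpreted as convergence to $\mathcal{L}_\omega$ and $\mathcal{L}_\alpha$ respectively in the sense of Definition \ref{ReducedComplexity}.

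With these identifications in place, Theorem \ref{MainTheorem} applies verbatim and yields that $\mathcal{L}$, together with all its images $\mathcal{L}_t = \phi_H^t(\mathcal{L})$, is a $C^1$ graph over the zero section. Strictly speaking there is no obstacle here, as no new argument is needed beyond the main theorem; the only conceptual point worth stressing is that the definition of reduced complexity convergence in Definition \ref{ReducedComplexity} does not require $\mathcal{L}_n$ and the limit to be distinct, so recurrence is a legitimate (and in fact the most natural) special case of the hypothesis of Theorem \ref{MainTheorem}. All of the genuine work is carried out in the proof of the main theorem, where the limits $\mathcal{L}_\omega$ and $\mathcal{L}_\alpha$ are a priori allowed to differ from $\mathcal{L}$ and from each other.
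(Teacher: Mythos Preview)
Your proposal is correct and matches the paper's treatment exactly: the corollary is stated immediately after Theorem \ref{MainTheorem} as a direct specialization with $\mathcal{L}_\omega = \mathcal{L}_\alpha = \mathcal{L}$, and the paper does not even write out a separate proof. Your verification that the hypotheses of Theorem \ref{MainTheorem} are satisfied under the recurrence assumption is precisely the implicit reasoning the paper relies on.
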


An immediate consequence is the result by Marie-Claude Arnaud and Andrea Venturelli \cite{MR3674224} on periodic Lagrangian submanifolds under Tonelli Hamiltonian maps.

\begin{cor}
	Let $M$ be a closed manifold. If a Lagrangian submanifold $\mathcal{L}$ of $T^*M$ is periodic under the Hamiltonian map $\phi^1_H$ of a Tonelli Hamiltonian $H$, then $\mathcal{L}$ is a $C^1$ graph over the zero section $0_{T^*M}$ of $T^*M$. 
\end{cor}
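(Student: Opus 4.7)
The strategy is to reduce the final corollary to Corollary \ref{MainCorRecu} by showing that periodicity under $\phi_H^1$ implies positive and negative time recurrence in the sense of reduced complexity convergence. Once this reduction is made, the conclusion (that $\mathcal{L}$ is a $C^1$ graph over the zero section) is immediate.

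\textbf{Step 1: Extract a period.} By hypothesis, $\mathcal{L}$ is periodic under $\phi_H^1$, so there exists an integer $N \geq 1$ with $\phi_H^N(\mathcal{L}) = \mathcal{L}$. Set $n_k = m_k = kN$, so that $\mathcal{L}_{n_k} = \mathcal{L}_{-m_k} = \mathcal{L}$ for every $k \geq 0$. The sequences $(n_k)$ and $(m_k)$ are increasing integer sequences, as required in the hypothesis of Corollary \ref{MainCorRecu}.

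\textbf{Step 2: Verify reduced complexity convergence.} We need to check the two conditions of Definition \ref{ReducedComplexity} for the constant sequences $(\mathcal{L}_{n_k})_k$ and $(\mathcal{L}_{-m_k})_k$, both equal to $\mathcal{L}$. The Hausdorff convergence is trivial since $d_H(\mathcal{L}, \mathcal{L}) = 0$. For the control on the Liouville primitives, since $\mathcal{L}$ is $H$-isotopic to the zero section, pick $\varphi \in \Ham(T^*M, \omega)$ with $\mathcal{L} = \varphi(0_{T^*M})$; by Proposition \ref{ReducedComplexityProp}, this choice is irrelevant. Then
\begin{equation*}
	\varphi^{-1}(\mathcal{L}_{n_k}) = \varphi^{-1}(\mathcal{L}) = 0_{T^*M}.
\end{equation*}
On the zero section, the Liouville form $\lambda = p \cdot dq$ vanishes identically, so every Liouville primitive on $0_{T^*M}$ is locally constant; as $M$ is connected (or taking a primitive component by component if not), we have $\osc(l_{n_k}) = 0$ for all $k$, and identically $\osc(l_{-m_k}) = 0$. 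Both conditions of Definition \ref{ReducedComplexity} are thus satisfied trivially.

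\textbf{Step 3: Invoke the recurrence corollary.} With reduced complexity convergence established in both positive and negative times, Corollary \ref{MainCorRecu} applies directly and yields that $\mathcal{L}$ and all its images $\mathcal{L}_t$ are $C^1$ graphs over the zero section $0_{T^*M}$. There is no real obstacle here: the whole content of the corollary is that the periodic setting is a (very degenerate) special case of the recurrent setting, the Liouville oscillation being automatically zero for a constant sequence pulled back to the zero section.
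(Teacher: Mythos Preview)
Your proof is correct and follows exactly the approach the paper intends: the corollary is stated as an ``immediate consequence'' of Corollary \ref{MainCorRecu}, and you have spelled out precisely why---a periodic Lagrangian yields constant sequences $(\mathcal{L}_{n_k})_k = (\mathcal{L}_{-m_k})_k = \mathcal{L}$, for which both Hausdorff convergence and vanishing oscillation of the Liouville primitive on $\varphi^{-1}(\mathcal{L}) = 0_{T^*M}$ are trivial. One small remark: the corollary as stated omits the hypothesis that $\mathcal{L}$ is $H$-isotopic to the zero section, but this is inherited from the standing assumptions of the paper (and is required in the Arnaud--Venturelli result being recovered), so your use of it in Step~2 is legitimate.
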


It is possible to obtain more information about the scalar map $u : \mathbb{R} \times M \to \mathbb{R}$ for which $\mathcal{L}_t$ is the graph of $du(t, \cdot)$. Further study, focused on weak-KAM theory and viscosity solutions theory, reveals that $u(t, q)$ is a recurrent (viscosity) solution of the Hamilton-Jacobi equation
\begin{equation}
	\partial_tu + H(t,q,d_qu(t,q)) =\alpha_0
\end{equation}
where $\alpha_0$ is a well chosen constant (see Proposition \ref{ManeCritValueProp}). This is contained in the establishment of following.

\begin{cor} \label{MainCor2} 
	Under the assumptions of Theorem \ref{MainTheorem}, the Lagrangian submanifold $\mathcal{L}$ is a $\phi_H^1$-recurrent Lagrangian submanifold for the Hausdorff distance.
\end{cor}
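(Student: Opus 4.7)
The plan is to translate the problem into one about the associated viscosity solution of the Hamilton--Jacobi equation, so that the recurrence of bounded viscosity solutions proved in \cite{Representation} can be applied and then pulled back to the Lagrangian side through the Lax--Oleinik semigroup of Appendix \ref{AppendixLO}. By Theorem \ref{MainTheorem} every $\mathcal{L}_t$ is a $C^1$ graph, so there exists $u:\mathbb{R}\times M\to \mathbb{R}$ (unique up to an additive function of $t$) with $\mathcal{L}_t = \mathrm{graph}(d_q u(t,\cdot))$. Proposition \ref{ManeCritValueProp} together with the Lax--Oleinik calculus recalled in Appendix \ref{AppendixLO} then realises $u$ as a global viscosity solution of $\partial_t u + H(t,q,d_qu) = \alpha_0$, and the existence of reduced-complexity limits in both positive and negative times sandwiches $u$ between two bounded Busemann-type solutions, ensuring that $u$ can be normalised to be uniformly bounded on $\mathbb{R}\times M$.

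Next I would invoke the main result of \cite{Representation}, which asserts that every bounded global viscosity solution is $C^0(M)$-recurrent in both positive and negative integer times under the Lax--Oleinik semigroup. Applied to $u$, this yields increasing integer sequences $p_j, q_j \to +\infty$ and real constants $c_j^+, c_j^-$ such that
\begin{equation*}
	u(p_j,\cdot) - c_j^+ \longrightarrow u(0,\cdot) \quad\text{and}\quad u(-q_j,\cdot) - c_j^- \longrightarrow u(0,\cdot)
\end{equation*}
uniformly on $M$.

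The final and most delicate step is to upgrade this $C^0$ convergence of potentials to Hausdorff convergence of the graphs $\mathcal{L}_{p_j}=\mathrm{graph}(d_q u(p_j,\cdot))$, since $C^0$ convergence of $C^1$ functions does not in general entail $C^0$ convergence of their differentials; this is where the main obstacle lies. The fix is to use the Tonelli structure: the HJ equation together with superlinearity of $H$ and the uniform bound on $u(p_j,\cdot)-c_j^+$ confines the $\mathcal{L}_{p_j}$ to a common compact subset of $T^*M$ and equips them with uniform Lipschitz bounds. Any Hausdorff accumulation point $\mathcal{L}_\infty$ of $\{\mathcal{L}_{p_j}\}$ is then a Lipschitz graph of some potential $v$, and integrating $d_q u(p_j,\cdot)$ along paths and passing to the limit identifies $dv$ with $d_q u(0,\cdot)$; hence $\mathcal{L}_\infty = \mathcal{L}$. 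Uniqueness of every subsequential Hausdorff limit forces $\mathcal{L}_{p_j} \to \mathcal{L}$ in $d_H$, and the analogous argument applied to $-q_j$ handles the negative-time direction, completing the proof.
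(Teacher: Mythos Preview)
Your overall strategy---realise $u$ as a global solution of the Hamilton--Jacobi equation, establish its $C^0$-recurrence, then upgrade to Hausdorff recurrence of the graphs---is exactly the paper's route. Two differences in execution are worth noting. First, the paper does not invoke a black-box recurrence theorem from \cite{Representation}; it derives recurrence directly from the Lax--Oleinik machinery of Appendix \ref{AppendixLO}: after showing $u(t)=\mathcal{T}_-^{s,t}u(s)=\mathcal{T}_+^{s,t}u(s)$, it uses Proposition \ref{BoundedLO} to get a subsequential $C^0$-limit $u_\alpha$ of $u(-m_k,\cdot)$ and then the non-expansiveness of $\mathcal{T}_-$ (Proposition \ref{NonExpansiveness}) to produce the Cauchy estimate
\[
\|u(-m_{k+1}+m_k,\cdot)-u(0,\cdot)\|_\infty \le \|u(-m_{k+1},\cdot)-u_\alpha\|_\infty+\|u_\alpha-u(-m_k,\cdot)\|_\infty\to 0.
\]
Your boundedness step (``sandwiching between Busemann-type solutions'') is vaguer than the paper's direct use of Proposition \ref{BoundedLO}, though the idea is in the same circle.

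The genuine weak point in your proposal is the final upgrade. You write that for a Hausdorff accumulation point $\mathcal{L}_\infty=\mathrm{graph}(dv)$, ``integrating $d_qu(p_j,\cdot)$ along paths and passing to the limit identifies $dv$ with $d_qu(0,\cdot)$''. But $C^0$ convergence $u(p_j,\cdot)\to u(0,\cdot)$ gives no control on $d_qu(p_j,\cdot)$ along a fixed path, so you cannot pass to the limit inside the line integral; nor does it a priori identify the Lipschitz graph $\mathcal{L}_\infty$ with $\mathrm{graph}(d_qu(0,\cdot))$. This is precisely the nontrivial step: $C^0$ convergence of Lax--Oleinik images \emph{does} imply Hausdorff convergence of the associated pseudographs, but that is a theorem of Arnaud \cite{MR2150356}, which the paper explicitly invokes in the Remark following its proof. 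You should either cite that result, or---as the paper's proof body does---argue via the \emph{original} reduced-complexity hypothesis (which already contains Hausdorff convergence to $\mathcal{L}_\alpha,\mathcal{L}_\omega$) together with the identification of $u_\alpha,u_\omega$ with $u_0$.
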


In the autonomous case, A.Fathi \cite{MR1650261,Representation} established a convergence theorem which tells that recurrent (viscosity) solutions of the Hamilton-Jacobi equation are stationary, meaning that they are time-independent and thus satisfy the equation
\begin{equation}
	H(q,d_qu(t,q)) = \alpha_0
\end{equation}
As a consequence, we derive the following autonomous version of Corollary \ref{MainCor2}.

\begin{cor} \label{MainCor3} 
	Under the assumptions of Theorem \ref{MainTheorem} and if $H : T^*M \to \mathbb{R}$ is autonomous, the Lagrangian submanifold $\mathcal{L}$ is a $C^1$ graph over the zero section $0_{T^*M}$, and it is a $\phi_H$-invariant Lagrangian submanifold i.e for all time $t \in \mathbb{R}$, $\phi_H^t(\mathcal{L}) = \mathcal{L}$.
\end{cor}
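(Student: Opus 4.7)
The plan is to bootstrap on the main theorem and the dynamical information contained in Corollary \ref{MainCor2}, then invoke Fathi's autonomous rigidity theorem to rule out time dependence.

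First I would apply Theorem \ref{MainTheorem} directly: since $H$ is autonomous it is in particular a time-periodic Tonelli Hamiltonian, so the hypotheses of the theorem are satisfied, and we already know that $\mathcal{L}$ and all its images $\mathcal{L}_t$ are $C^1$ graphs over the zero section. This delivers the first half of the statement for free and, as indicated in the paragraph preceding Corollary \ref{MainCor2}, yields a function $u : \mathbb{R} \times M \to \mathbb{R}$ such that $\mathcal{L}_t = \mathrm{graph}(d_q u(t, \cdot))$ for every $t$, where $u$ is a viscosity solution of
\begin{equation*}
    \partial_t u + H(q, d_q u(t,q)) = \alpha_0
\end{equation*}
for the constant $\alpha_0$ provided by Proposition \ref{ManeCritValueProp}.

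Next I would extract recurrence of $u$ from recurrence of $\mathcal{L}$. Corollary \ref{MainCor2} gives an increasing sequence $n_k$ such that $\phi_H^{n_k}(\mathcal{L}) \to \mathcal{L}$ in the Hausdorff sense; since the limit and the $\phi_H^{n_k}(\mathcal{L})$ are $C^1$ graphs of $d_q u(n_k, \cdot)$ and $d_q u(0, \cdot)$ respectively, standard graph-closedness arguments for $C^1$ Lagrangian graphs promote this to $C^0$ convergence of $u(n_k, \cdot)$ to $u(0, \cdot)$ (up to an additive constant, which is fixed by the choice of Liouville primitive); the analogous statement holds for the negative times $-m_k$. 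Hence $u$ is a recurrent global viscosity solution of the autonomous Hamilton-Jacobi equation, in the sense used by Fathi in \cite{MR1650261} and the author's \cite{Representation}.

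At this point I would cite Fathi's convergence theorem for the autonomous case: any recurrent viscosity solution of $\partial_t u + H(q, d_q u) = \alpha_0$ is stationary, i.e.\ $u(t,q) = u(0,q)$ for all $t \in \mathbb{R}$. With $u$ time-independent, $d_q u(t, \cdot) = d_q u(0, \cdot)$ for every $t$, so $\mathcal{L}_t = \mathrm{graph}(d_q u(0, \cdot)) = \mathcal{L}$, which is precisely $\phi_H$-invariance.

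The main obstacle I anticipate is the second step, namely transferring the reduced-complexity convergence of the Lagrangian graphs into the $C^0$ (or uniform) convergence of the associated primitives $u(t,\cdot)$ needed to apply Fathi's theorem. Hausdorff convergence of graphs of $C^1$ functions on a closed manifold does give $C^0$ convergence of the functions up to additive constants, but one must use the oscillation control in Definition \ref{ReducedComplexity} to pin down those constants and ensure that the limit is the right solution; this is where the Liouville primitive condition of reduced complexity is really doing work.
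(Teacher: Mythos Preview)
Your proposal is correct and follows essentially the same route as the paper: apply Theorem \ref{MainTheorem} to get that each $\mathcal{L}_t$ is the graph of $d_qu(t,\cdot)$ for a global viscosity solution $u$, establish that $u$ is recurrent, and then invoke Fathi's autonomous convergence theorem \cite{MR1650261} to conclude that $u$ is stationary, hence $\mathcal{L}_t=\mathcal{L}$.

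The one place where you take an unnecessary detour is your second step. You propose to derive $C^0$-recurrence of $u$ from the Hausdorff recurrence of $\mathcal{L}$ given by Corollary \ref{MainCor2}, and you flag this transfer as the main obstacle. In the paper the logic runs the other way: the proof of Corollary \ref{MainCor2} establishes the $C^0$-recurrence of $u$ \emph{directly}, using that $u(t)=\mathcal{T}_-^{s,t}u(s)$ together with the non-expansiveness of the Lax--Oleinik semigroup (Proposition \ref{NonExpansiveness}) and the oscillation control on the Liouville primitives, and only then deduces Hausdorff recurrence of $\mathcal{L}$ from it. So the ``obstacle'' you anticipate simply does not arise: you may quote the $C^0$-recurrence of $u$ as already contained in the proof of Corollary \ref{MainCor2}, and then apply Fathi's theorem immediately.
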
 

\begin{rem} \label{MainRem}
Many questions arise about possible generalizations:
\begin{enumerate}
	\item (Types of recurrence) The reduced complexity convergence may be considered too restrictive. The proof presented here works when the variation between the Liouville primitives on $\mathcal{L}_{-m_k}$ and $\mathcal{L}_{n_k}$ converges to zero as this provides calibration for limit curves that will allow the use of results from Fathi's weak-KAM theory. The conclusion we obtain is that $\mathcal{L}$ is the graph of $du$ where $u$ is a (viscosity) solution of the Hamilton-Jacobi equation.\\ 
	However, this reasoning fails when this variation between the Liouville primitives grows as it may happen if we only assume the Hausdorff convergence. In this case, we failed to make a working proof nor could we provide a counter-example. 
	\begin{quest} 
		Is a Birkhoff theorem still valid if we only assume a Hausdorff convergence, or more reasonably, spectral convergence of the sequences $\mathcal{L}_{-m_k}$ and $\mathcal{L}_{n_k}$ respectively to $\mathcal{L}_\alpha$ and $\mathcal{L}_\omega$. Here, spectral convergence stands for Viterbo's gamma distance $\gamma$ on Lagrangian submanifolds.
	\end{quest}
	
	\item (Negative times) It seems that for the case of viscosity solutions $u(t,x)$, negative time-recurrence implies positive-time recurrence (see Appendix \ref{AppendixLO}). Therefore, this is the case of the Lagrangian $\mathcal{L} = \mathcal{G}(d_qu(0, \cdot))$. And since the Lagrangian submanifolds considered in Theorem \ref{MainTheorem} turn out to be graphs of the differential of recurrent viscosity solutions, it feels natural to ask the question.
	\begin{quest}
		Is the Birkhoff theorem still true if we only assume a negative-time reduced complexity convergence? Alternatively, is it possible to construct a negative-time-recurrent Lagrangian submanifold $\mathcal{L}$ that is not recurrent in positive times ?
	\end{quest}
	Providing a counter-example turned out to be trickier than expected as examples of recurrent Lagrangian submanifolds are lacking to the literature.
\end{enumerate}
\end{rem}

\begin{proof}[Proof of Proposition \ref{ReducedComplexityProp}] 
	Let $\varphi$ and $\psi$ be two Hamiltonian maps such that $\mathcal{L} = \varphi(0_{T^*M}) = \psi(0_{T^*M})$. We set $k_n$ to be Liouville primitives on the Lagrangian submanifolds $\varphi^{-1} (\mathcal{L}_n)$. We get Liouville primitives $l_n$ on $\psi^{-1} (\mathcal{L}_n)$ using the following lemma.
	\begin{lem} \label{PrimitiveChange}
		Let $f$ be an exact symplectomorphism of $T^*M$ and let $g : T^*M \to \mathbb{R}$ be a scalar map such that $f^* \lambda - \lambda = dg$. Then, if $\mathcal{L}$ is an exact Lagrangian submanifold of $T^*M$ with Liouville primitive $k$, then $f^{-1}(\mathcal{L})$ is an exact Lagrangian submanifold of $T^*M$ with Liouville primitive $l=k\circ f -g$.
	\end{lem}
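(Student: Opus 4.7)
The proof should be a direct pullback computation, so the plan is just to organize the bookkeeping cleanly. Let $i : f^{-1}(\mathcal{L}) \hookrightarrow T^*M$ and $j : \mathcal{L} \hookrightarrow T^*M$ denote the inclusions, and let $\tilde f : f^{-1}(\mathcal{L}) \to \mathcal{L}$ be the restriction of $f$, so that $f \circ i = j \circ \tilde f$. The goal is to check that $i^* \lambda = d\bigl(i^*(k \circ f - g)\bigr)$, which is exactly the statement that $l = k \circ f - g$ is a Liouville primitive on $f^{-1}(\mathcal{L})$.

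First I would compute $(f \circ i)^* \lambda$ in two ways. Using the factorization through $\mathcal{L}$ and the hypothesis $j^* \lambda = dk$, one gets
\begin{equation*}
(f\circ i)^*\lambda \;=\; \tilde f^* j^* \lambda \;=\; \tilde f^* dk \;=\; d(k \circ \tilde f) \;=\; d\bigl(i^*(k \circ f)\bigr).
\end{equation*}
Using the exactness hypothesis $f^* \lambda = \lambda + dg$, one instead gets
\begin{equation*}
(f \circ i)^* \lambda \;=\; i^*(f^*\lambda) \;=\; i^*\lambda + d\bigl(i^*g\bigr).
\end{equation*}
Equating the two expressions yields
\begin{equation*}
i^*\lambda \;=\; d\bigl(i^*(k\circ f - g)\bigr),
\end{equation*}
so $f^{-1}(\mathcal{L})$ is exact with Liouville primitive $l = k \circ f - g$.

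There is essentially no obstacle here beyond making the restrictions unambiguous: the only point to watch is that the identity $f^* \lambda - \lambda = dg$ is a global statement on $T^*M$, and its restriction to the submanifold $f^{-1}(\mathcal{L})$ is what couples to the pullback of $dk$. Once both sides are written as pullbacks along $i$, the exterior derivative commutes with $i^*$ and the computation closes in one line.
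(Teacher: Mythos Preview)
Your proof is correct and is essentially the same pullback computation as the paper's, just carried out at the level of differential forms rather than via curve integrals. The paper integrates $\lambda$ along a curve in $f^{-1}(\mathcal{L})$ and changes variables using $(f^{-1})^*\lambda = \lambda + d(-g\circ f^{-1})$, which unwinds to the same identity $i^*\lambda = d(k\circ f - g)$ you obtained directly.
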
	
	\begin{proof}
		Let $x$ and $y$ be two points of $f^{-1}(\mathcal{L})$ linked by a curve $\gamma$ in $f^{-1}(\mathcal{L})$ and let $x' = f(x)$ and $y' = f(y)$ linked by $\gamma' = f(\gamma)$ in $\mathcal{L}$. We will evaluate $\int_\gamma \lambda$.
		We have 
		\begin{align*}
			(f^{-1})^*\lambda = (f^{-1})^* ( f^* \lambda - dg) = \lambda +d(-g\circ f^{-1})
		\end{align*}
		We set $h := -g\circ f^{-1}$. Then we have
		\begin{align*}
			\int_\gamma \lambda &= \int_{\gamma'} (f^{-1})^*\lambda = \int_{\gamma'} \lambda +dh = \int_{\gamma'} dk +dh = \int_{\gamma'} d(k+h) = \int_\gamma d\big( (k+h) \circ f \big)
		\end{align*}
		where 
		\begin{align*}
			(k+h) \circ f  = k \circ f - g = l
		\end{align*}
		Hence, the Lagrangian submanifold $f^{-1}(\mathcal{L})$ is exact with Liouville primitive $l$.
	\end{proof}             
	
	We apply the lemma for $f = \varphi^{-1} \circ \psi$ so that $\psi^{-1}(\mathcal{L}_n) = f^{-1} ( \varphi^{-1}(\mathcal{L}_n))$ and we choose $l_n = k_n \circ f -g$ to be Liouville primitives on $\psi^{-1}(\mathcal{L}_n)$, where $f^*\lambda - \lambda = dg$. We need to prove that $\lim_n \osc (l_n) = 0$. We have
	\begin{equation*}
		\osc (l_n) = \osc (k_n \circ f -g) \leq \osc (k_n \circ f) + \osc (g_{|\psi^{-1}(\mathcal{L}_n)}) = \osc (k_n) + \osc (g_{|\psi^{-1}(\mathcal{L}_n)})
	\end{equation*}
	Since the reduced complexity convergence provides the first limit $\lim_n \osc (k_n) =0$, it suffices to show that $\lim_n \osc (g_{|\psi^{-1}(\mathcal{L}_n)})$. We know that $\mathcal{L} = \varphi (0_{T^*M}) = \psi( 0_{T^*M})$ so that $f(0_{T^*M}) = 0_{T^*M}$. This gives $f^*\lambda_{|0_{T^*M}} = 0$ and we get 
	\begin{equation*}
		dg_{|0_{T^*M}} = ( f^*\lambda - \lambda) _{|0_{T^*M}} = 0
	\end{equation*}
	and consequently, $g$ is constant on the zero-section $0_{T^*M}$, or in other words $ \osc (g_{|0_{T^*M}})=0$. Now, we know by assumption that $\lim\limits_n d_H(\mathcal{L}_n , \mathcal{L}) =0$, or by continuity of the Hamiltonian flow $\psi$, that $\lim\limits_n d_H(\psi^{-1}(\mathcal{L}_n) , 0_{T^*M}) =0$. Hence, uniform continuity of the map $g$ on a compact neighbourhood of the zero-section $0_{T^*M}$ yields the desired limit $\lim_n \osc (g_{|\psi^{-1}(\mathcal{L}_n)})$.
\end{proof}

\subsection{Structure of the Article}

The Section \ref{Extsection} of this article is devoted to the definition of an extended autonomous Hamiltonian $\mathscr{H}$ and an extended Lagrangian submanifold $\mathscr{L}$ in the new phase space $T^*(\mathbb{R} \times M)$. In Section \ref{GFsection}, we recall the main properties of generating functions and Liouville primitives and we construct convenient ones for $\mathscr{L}$. Section \ref{GSsection} contains a reminder on spectral invariants and graph selectors with the properties that will play a central role in the proof of the main theorem. A fitting graph selector $u$ of $\mathscr{L}$ will then be defined. The following Section \ref{Proofsection} is dedicated to the proof of the main results of this paper, using various concepts coming from the weak KAM Theory. An Appendix \ref{AppendixLO} introduces complementary tools that will serve to prove Corollary \ref{MainCor2}. \\

A knowledgeable reader about generating functions and spectral invariants can start with Section \ref{Extsection} and skip to Subsection \ref{GSConstruction} which deals with the construction of the extended graph selector.

\section{Extension of the Lagrangian Submanifold} \label{Extsection}

To have a global view over all the Lagrangian submanifolds $\mathcal{L}_t = \phi_H^t(\mathcal{L})$, we will consider time as part of the manifold variables and combine them into a single Lagrangian submanifold. Let $\mathcal{M} = \mathbb{R} \times M$ be a non-compact manifold with cotangent bundle $T^* \mathcal{M} = T^*\mathbb{R} \times T^*M$. We denote the coordinates in $T^* \mathcal{M}$ by $(\tau, E, q,p)$. The $1$-form $\Lambda = \lambda + Ed\tau$ is a Liouville form for $T^* \mathcal{M}$ endowed with the symplectic form $\Omega = -d\Lambda = \omega + d\tau \wedge dE$ \\

We extend the Hamiltonian $H : \mathbb{T}^1 \times T^*M \to \mathbb{R}$ to a Hamiltonian $\mathscr{H} : T^*\mathbb{R} \times T^*M \to \mathbb{R}$ defined by
\begin{equation}
	\mathscr{H}(\tau,E,q,p) = E + H( \tau, q, p)
\end{equation}

And we extend the exact Lagrangian submanifolds $\mathcal{L}_t$ to a Lagrangian submanifold of $T^*\mathcal{M}$ defined by
\begin{equation}
	\mathscr{L} := \big\{ \phi_\mathscr{H}^t\big(0,-H(0,q,p),q,p\big) \; | \; (q,p) \in \mathcal{L}, \; t \in \mathbb{R} \big\}
\end{equation}

Note that for $(q,p) \in \mathcal{L}$, $\mathscr{H}\big(0,-H(0,q,p), q,p\big) = -H(0,q,p) + H(0,q,p) =0$, and since the Hamiltonian is autonomous, we get the inclusion
\begin{equation} \label{zero}
	\mathscr{L} \subset \{ \mathscr{H} = 0 \}
\end{equation}

We will see in Proposition \ref{LExtExact} that the Lagrangian submanifold $\mathscr{L}$ is exact.

\section{Generating Functions} \label{GFsection}

This section is devoted to a brief presentation of generating functions and their main properties. Although everything presented here is known, we will give proofs of all properties except for Sikorav's existence theorem and Viterbo's uniqueness theorem.

Moreover, convenient generating functions for $\mathscr{L}$ will be constructed along the exposition.

\subsection{Generating Functions}

Let $\mathcal{M}$ be a manifold, not necessarily compact. Let $p : E \to \mathcal{M} $ be a finite-dimensional vector bundle over $\mathcal{M}$.

\begin{defi} \label{gf} 
	A $ \mathcal{C}^2$ map $S : E \to \mathbb{R}$ is a \textit{generating function} if
	\begin{enumerate}[label=\roman*.]
		\item Zero is a regular value of the map
	\begin{align*}
		E & \to  T^*E \\		
		(q,\xi)& \mapsto d_\xi S(q,\xi)
	\end{align*}
	So that the \textit{critical locus} $\Sigma_S = \{(q,\xi) \in E \; | \; d_\xi S(q,\xi) =0  \}$ is a submanifold of $E$.
	\item The map 
	\begin{equation}
		\begin{split}
			i_S : \; & \Sigma_S \to T^*\mathcal{M} \\
			& (q, \xi) \mapsto (q,d_q S(q,\xi) )
		\end{split}
	\end{equation}
	is a diffeomorphism from $\Sigma_S$ onto its image $\mathcal{L}_S = i_S( \Sigma_S )$. $\mathcal{L}_S$ is the (exact) Lagrangian submanifold generated by $S$.
	\end{enumerate}

	When the bundle $E$ is trivial i.e. $E = \mathcal{M} \times \mathbb{R}^k$ and there exists a non-degenerate quadratic form $Q : \mathbb{R}^k \to \mathbb{R}$ and a real constant $c \in \mathbb{R}$ such that $S = c + Q$ outside of a compact subset of $E$, we say that $S$ is a \textit{generating function quadratic at infinity} or \textit{g.f.q.i}. The \textit{index} of $S$ is the index of the quadratic form $Q$.
\end{defi}

\begin{theo} \label{Exist}
	(Sikorav's existence theorem \cite{MR0882965}) Let $\mathcal{L}$ be a Lagrangian submanifold of $T^* \mathcal{M}$ that admits a g.f.q.i and let $\phi^t$ be a Hamiltonian isotopy. Then there exists a smooth path of g.f.q.i $(S_t : M \times \mathbb{R}^k \to \mathbb{R})_{t \in [0,1]}$ with a fixed dimension $k$ such that for all $t \in [0,1]$, $S_t$ generates $\phi^t( \mathcal{L})$.
\end{theo}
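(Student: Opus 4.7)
The plan is to follow the classical composition strategy of Chaperon--Laudenbach--Sikorav: start with the existing g.f.q.i.\ $S_0$ for $\mathcal{L}$, and build $S_t$ by concatenating $S_0$ with generating functions for small-time pieces of the isotopy $\phi^t$. The key identity is the \emph{composition formula}: if $S : \mathcal{M} \times \mathbb{R}^k \to \mathbb{R}$ is a g.f.q.i.\ for a Lagrangian $\mathcal{N} \subset T^*\mathcal{M}$, and $F : \mathcal{M} \times \mathcal{M} \times \mathbb{R}^m \to \mathbb{R}$ is a g.f.q.i.\ for the graph of a Hamiltonian diffeomorphism $\psi$ (viewed as a Lagrangian in $T^*(\mathcal{M}\times\mathcal{M})$ after the anti-diagonal identification $(x,-y)\mapsto(x,y)$), then
\begin{equation*}
	(S \# F)(q, y, \xi, \eta) \; := \; S(y, \xi) + F(y, q, \eta)
\end{equation*}
defines, after checking that zero is a regular value of the fiber differential, a g.f.q.i.\ on $\mathcal{M} \times (\mathcal{M} \times \mathbb{R}^k \times \mathbb{R}^m)$ that generates $\psi(\mathcal{N})$. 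The resulting quadratic-at-infinity form is just the direct sum of the two previous ones.

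The second ingredient is the \emph{local existence} of $F$ for diffeomorphisms close to the identity. Pick $\varepsilon > 0$ small enough that for every $s \in [0,1]$, the time-$\varepsilon$ piece $\phi^{s,s+\varepsilon}$ is $C^1$-close to the identity on the compact region of interest. Then the graph of $\phi^{s,s+\varepsilon}$ is a Lagrangian in $T^*(\mathcal{M}\times\mathcal{M})$ which is $C^1$-close to the conormal bundle of the diagonal, hence projects diffeomorphically onto a neighbourhood of the zero section in $T^*\mathcal{M}$. In such coordinates the graph is the graph of an exact $1$-form $dF_{s,\varepsilon}$, and adding an auxiliary non-degenerate quadratic form in extra variables promotes $F_{s,\varepsilon}$ to a g.f.q.i., depending smoothly on $s$.

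With these two tools, fix once and for all an integer $N$ large enough that $1/N < \varepsilon$, and for every $t \in [0,1]$ split $[0,t]$ into $N$ equal subintervals of length $t/N$. Define
\begin{equation*}
	S_t := S_0 \,\#\, F_{0,\,t/N} \,\#\, F_{t/N,\,t/N} \,\#\, \cdots \,\#\, F_{(N-1)t/N,\,t/N},
\end{equation*}
so that $S_t$ generates $\phi^t(\mathcal{L})$ and has fiber dimension $k + N m$ independent of $t$. Smoothness in $t$ follows from the smoothness of each $F_{s,\varepsilon}$ in $s$ and $\varepsilon$ and the fact that the composition $\#$ is a smooth operation on generating functions.

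The main obstacle is the local step: producing $F_{s,\varepsilon}$ as a \emph{global} g.f.q.i.\ rather than merely as a local generating function, which requires either compactness of $\mathcal{M}$ or appropriate control of $H$ at infinity to guarantee that the quadratic-at-infinity behaviour can be arranged after the composition. A secondary, and purely bookkeeping, subtlety is checking that the regularity hypothesis (zero a regular value of $d_\xi(S\#F)$) and the diffeomorphism condition in Definition~\ref{gf} are preserved under $\#$; this reduces to verifying that the constraint $d_y S + d_y F = 0$ together with $d_\xi S = 0$ and $d_\eta F = 0$ cuts out a transverse intersection, which is the content of the composition formula and is insensitive to $t$.
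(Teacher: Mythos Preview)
The paper does not give its own proof of this theorem. Immediately after the statement it writes: ``This theorem proven in paragraph $1.7$ of \cite{MR0882965} has been simplified by M.~Brunella in \cite{MR1115746}\dots An easy adaptation proves this parametrized version (see \cite{humil2008} Appendix B),'' and in the introduction to Section~\ref{GFsection} the author explicitly says that proofs are given for everything \emph{except} Sikorav's existence theorem and Viterbo's uniqueness theorem. So there is nothing to compare against beyond the cited literature.

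Your outline is precisely the classical Chaperon--Laudenbach--Sikorav composition argument that those references contain: chop the isotopy into $C^1$-small pieces, represent each piece by a generating function of its graph near the diagonal, and concatenate via the fibered sum $\#$. The sketch is accurate, and the two subtleties you flag (quadratic-at-infinity behaviour in the non-compact case, and transversality for the composed critical locus) are exactly the points where the actual proofs do the work. One small remark: in the paper's applications the base is $M$ compact or $\mathbb{R}\times M$ with a Hamiltonian that is well controlled, so the ``main obstacle'' you mention is handled by the ambient hypotheses rather than by anything internal to the existence theorem; your caveat is appropriate for the statement as written but is not an obstruction in the paper's setting.
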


This theorem proven in paragraph $1.7$ of \cite{MR0882965} has been simplified by M.Brunella in \cite{MR1115746}. Actually, the method exposed in \cite{MR1115746} does not provide a smooth path, but gives only the generating function $S_1$. An easy adaptation proves this parametrized version (see \cite{humil2008} Appendix B). A more general theorem by Théret \cite{MR1709692} states that the space of g.f.q.i is, up to equivalence, a smooth Serre fibration over the space of Lagrangian submanifolds Hamiltonianly isotopic to the zero section.\\

We can define for all time $T>0$ the paths $(S^T_t : M \times \mathbb{R}^{k_T} \to \mathbb{R})_{t \in [-T,T]}$ such that $S^T_t$ generates $\mathcal{L}_t := \phi_H^t( \mathcal{L})$. We may sometimes write $S^T(t,x)$ instead of $S^T_t(x)$.

\begin{defi}
	Let $S : E = \mathcal{M} \times \mathbb{R} \to \mathbb{R}$ be a generating function. The \textit{basic operations} on generating functions are
	\begin{itemize}
		\item \textit{(Translation)} $S' = S+c : E \to \mathbb{R}$ for some constant $c \in \mathbb{R}$.
		\item \textit{(Bundle isomorphism)} $S' = S \circ F : E' \to \mathbb{R}$ for some bundle isomorphism $F : E' \to E$ where $p' : E' \to \mathcal{M}$ is a vector bundle over $\mathcal{M}$.
		\item \textit{(Stabilization)} $S' = S \oplus Q' : E \oplus E' \to \mathbb{R}$ for some map $Q' : E' \to \mathbb{R}$ that is a non-degenerate quadratic form when restricted to the fibers of a finite dimensional vector bundle $p' : E' \to \mathcal{M}$.   
	\end{itemize}
	Two generating functions $S$ and $S'$ are said \textit{equivalent} if they can be made equal to a third generating function $S''$ after a succession of basic operations.
\end{defi}

\begin{theo} \label{Uniq}
	(Viterbo's uniqueness theorem \cite{MR1157321,MR1709692}) If $S$ and $S'$ are two g.f.q.i that generate the same Lagrangian submanifold $\mathcal{L}$, then they are equivalent.
\end{theo}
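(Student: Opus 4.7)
The plan is to reduce to the case where $\mathcal{L}$ is the zero section via Sikorav's existence theorem, and then to show that any g.f.q.i.\ generating the zero section is equivalent, up to basic operations, to its quadratic part at infinity.

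First I would put $S$ and $S'$ on a common fiber dimension by stabilization, so that both are defined on $M \times \mathbb{R}^n$ for a common $n$. Since $\mathcal{L}$ admits a g.f.q.i.\ it is Hamiltonianly isotopic to $0_M$, so I pick a Hamiltonian isotopy $(\phi^t)_{t\in[0,1]}$ with $\phi^0 = \mathrm{id}$ and $\phi^1(\mathcal{L}) = 0_M$. Applying Theorem \ref{Exist} along this isotopy starting from $S$ and $S'$ respectively produces smooth paths $(S_t)$ and $(S'_t)$ of g.f.q.i., with fixed fiber dimension, generating $\phi^t(\mathcal{L})$ for each $t$. The endpoints $S_1$ and $S'_1$ both generate $0_M$. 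Because the Sikorav construction proceeds by composition with a Hamiltonian isotopy together with fiberwise adjustments that are themselves basic operations in parametrized form, a chain of basic operations relating $S_1$ and $S'_1$ can be propagated back to a chain of basic operations relating $S$ and $S'$.

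Next I treat the zero section. For a g.f.q.i.\ $S$ generating $0_M$, the identity $i_S(q,\xi) = (q,0)$ on the critical locus forces $d_q S|_{\Sigma_S} = 0$, and combined with $d_\xi S|_{\Sigma_S} = 0$ this yields $dS|_{\Sigma_S} = 0$. Since $i_S$ is a diffeomorphism from $\Sigma_S$ onto $M$, the set $\Sigma_S$ is connected, so $S$ takes a single constant value $c$ on $\Sigma_S$. After the translation operation I may assume $c = 0$. The remaining claim is that such an $S$ is equivalent, after possibly stabilizing, to its quadratic part at infinity $Q$. The idea is to build an explicit path $(S_s)_{s\in[0,1]}$ of g.f.q.i.\ of $0_M$ connecting $S$ to $Q$, using the fiberwise gradient flow of $S$ to contract the non-quadratic region onto $\Sigma_S$ together with a Morse-type normal form near $\Sigma_S$. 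Matching the quadratic parts of $S$ and $S'$ at the very end is then handled by an additional stabilization, since on a common higher-dimensional fiber bundle any two non-degenerate quadratic forms of the same index are linearly conjugate.

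The main obstacle is the deformation to the quadratic normal form while remaining inside the class of g.f.q.i.\ of $0_M$. The naive straight-line homotopy $sS + (1-s)Q$ fails this property in general, so one must argue more delicately, typically via a Cerf-theoretic argument on the one-parameter family of fiberwise functions, or via the Serre fibration structure on the space of g.f.q.i.\ established by Théret \cite{MR1709692}. A secondary subtlety is that the index of the quadratic part at infinity must agree for $S$ and $S'$ after sufficient stabilization; this should follow from the fact that stabilization shifts the index in a controlled (and matched) way while the other basic operations preserve it, so that the index modulo stabilization is a well-defined invariant of $\mathcal{L}$ alone.
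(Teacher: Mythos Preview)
The paper does not prove this theorem. At the start of Section~\ref{GFsection} the author writes explicitly that ``we will give proofs of all properties except for Sikorav's existence theorem and Viterbo's uniqueness theorem''; Theorem~\ref{Uniq} is stated with citations to \cite{MR1157321,MR1709692} and nothing more. So there is no paper proof to compare against.

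As for your outline, it follows the classical strategy (Viterbo, completed by Th\'eret): reduce to the zero section via a Hamiltonian isotopy and Sikorav's theorem, then normalize any g.f.q.i.\ of $0_M$ to its quadratic part. Two points deserve sharpening. First, the ``propagation back'' you invoke is not an extra argument: the Sikorav construction already produces $S_1$ \emph{equivalent} to $S$ (the path is obtained by successive stabilizations and compositions with fibered diffeomorphisms), so once $S_1$ and $S'_1$ are shown equivalent you are done directly. Second, the zero-section step is the entire content of the theorem, and you explicitly defer it to a Cerf-type argument or to Th\'eret's Serre-fibration result without carrying anything out; the linear homotopy fails, and the actual argument (normalizing the family of fiberwise Morse functions, handling birth--death points, matching indices after stabilization) is substantial. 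What you have written is a correct roadmap of the known proof, but the hard step is only named, not done.
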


\subsection{Liouville Primitives} \label{LPrim}

Recall that an exact Lagrangian submanifold $\mathcal{L}$ in $T^*M$ is a Lagrangian submanifold such that the Liouville form restricted to it $\lambda_{|\mathcal{L}}$ is exact i.e. $\lambda_{|\mathcal{L}} = dh$ with Liouville primitive $h: \mathcal{L} \to \mathbb{R}$.\\

The Lagrangian submanifolds $\mathcal{L}_t$ are exact and do admit Liouville primitives that can be deduced from their generating functions as follows
\begin{equation}
	h^T_t = S^T_t \circ i_{S|\mathcal{L}_t}^{-1}
\end{equation}

\begin{prop}
	$h^T_t$ is a Liouville primitive on $\mathcal{L}_t$
\end{prop}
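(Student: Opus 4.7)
The plan is to verify directly that $dh^T_t = \lambda_{|\mathcal{L}_t}$. Since $i_S : \Sigma_S \to \mathcal{L}_S$ (where I write $S = S^T_t$ for brevity) is a diffeomorphism by the generating function axioms, it suffices, after pulling back along $i_S$, to show that $d(S|_{\Sigma_S}) = i_S^{*}\lambda$ as $1$-forms on the critical submanifold $\Sigma_S$.

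First I would compute the right-hand side. In local coordinates $(q,\xi)$ on $E = \mathcal{M} \times \mathbb{R}^k$, the map reads $i_S(q,\xi) = (q, d_q S(q,\xi))$, and since the Liouville form is $\lambda = p\, dq$, its pullback is simply
\begin{equation*}
i_S^{*}\lambda = d_qS(q,\xi)\, dq,
\end{equation*}
viewed as a $1$-form on $E$ and then restricted to $T\Sigma_S$.

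Next I would compute the left-hand side. For a tangent vector $v = (v_q, v_\xi) \in T_{(q,\xi)}\Sigma_S$ one has
\begin{equation*}
dS(q,\xi)[v] = d_qS(q,\xi)[v_q] + d_\xi S(q,\xi)[v_\xi].
\end{equation*}
The key observation is that by the definition of the critical locus, $d_\xi S(q,\xi) = 0$ for every $(q,\xi) \in \Sigma_S$, so the second term vanishes identically on $\Sigma_S$ (not merely on tangent vectors, but as a function on $\Sigma_S$). Hence $d(S|_{\Sigma_S}) = d_qS\, dq|_{T\Sigma_S}$, which coincides with $i_S^{*}\lambda$. Transporting this identity back to $\mathcal{L}_t$ via the diffeomorphism $i_S^{-1}$ yields $\lambda_{|\mathcal{L}_t} = dh^T_t$, as required.

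There is no real obstacle here: the argument is the standard verification that the value of a generating function at a fibre-critical point defines the Liouville primitive of the generated Lagrangian, and the only subtlety is remembering that $d_\xi S = 0$ holds as an identity on $\Sigma_S$, which makes the $d\xi$-contribution to $dS|_{\Sigma_S}$ disappear without having to check anything about the tangent space $T\Sigma_S$.
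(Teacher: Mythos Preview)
Your proof is correct and follows essentially the same approach as the paper. Both arguments amount to the chain rule together with the observation that $d_\xi S = 0$ on $\Sigma_S$; the paper computes $dh = dS \circ di_S^{-1}$ directly on $\mathcal{L}_t$ using matrix notation, while you equivalently pull back to $\Sigma_S$ and compare $d(S|_{\Sigma_S})$ with $i_S^*\lambda$ there.
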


\begin{proof} For the sake of simplicity, we omit $T$ and $t$. Let $(q,p) \in \mathcal{L}$ and $(q, \xi) = i_{S|\mathcal{L}}^{-1}(q,p)$. By definition of $S$, we have that $p = d_qS$ and $d_\xi S(q, \xi) = 0$.
	\begin{align*}
		dh(q,p) & = d (S \circ i_{S|\mathcal{L}}^{-1}) (q,p) = dS(i_{S|\mathcal{L}}^{-1}(q,p)) \circ d i_{S|\mathcal{L}}^{-1} (q,p) \\
		&= \begin{pmatrix}
			d_qS(q, \xi) & d_\xi S(q,\xi)
		\end{pmatrix} 
		\begin{pmatrix}
			1 & 0 \\
			\times & \times
		\end{pmatrix}
		= \begin{pmatrix}
			p & 0
		\end{pmatrix} 
		\begin{pmatrix}
			1 & 0 \\
			\times & \times
		\end{pmatrix} \\
		&= p dq = \lambda(p,q)
	\end{align*}
\end{proof}

This gives us a path of Liouville primitives on $\mathcal{L}_t$. Now, we would like to construct a Liouville primitive on the Lagrangian submanifold $\mathscr{L}$.

Fix $h_0$ a Liouville primitive on $\mathcal{L}_0$. For all time $t \in \mathbb{R}$, define the map $h_t : \mathcal{L}_t \to \mathbb{R}$ by
\begin{equation} \label{Primitive}
	h_t(\phi_H^{t}(x)) = h_0(x) + \int_0^t \big( \gamma_x^* \lambda - H(\tau, \gamma_x(\tau)) \big) \; d\tau
\end{equation}
where $x$ belongs to $\mathcal{L}$ and $\gamma_x(\tau) = \phi_H^{\tau}(x)$.

\begin{prop} \label{htPrimitive}
	For all time $t \in \mathbb{R}$, $h_t$ is a Liouville primitive on $\mathcal{L}_t$.
\end{prop}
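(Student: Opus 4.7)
The plan is to transport the problem back to $\mathcal{L}$ via the diffeomorphism $\phi_H^t : \mathcal{L} \to \mathcal{L}_t$ and reduce to the classical identity relating a Hamiltonian flow to its deformation of the Liouville form. Define the action
\[
	A_t(x) := \int_0^t \big( \gamma_x^* \lambda - H(\tau, \gamma_x(\tau)) \big) \, d\tau, \qquad \gamma_x(\tau) = \phi_H^\tau(x),
\]
for $x \in T^*M$. Then definition \eqref{Primitive} reads exactly $h_t \circ \phi_H^t = h_0 + A_t|_{\mathcal{L}}$, so since $\phi_H^t$ is a diffeomorphism it will suffice to show that $d(h_t \circ \phi_H^t) = (\phi_H^t)^*\lambda$ on $\mathcal{L}$. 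Using the assumption $dh_0 = \lambda_{|\mathcal{L}}$, this in turn reduces to proving the global identity $dA_t = (\phi_H^t)^*\lambda - \lambda$ on $T^*M$.

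This identity is the classical formula for the action along a Hamiltonian flow, and I would establish it by differentiating in $t$ and applying Cartan's magic formula: using $\omega = -d\lambda$ and the defining relation $\iota_{X_{H_t}} \omega = dH_t$, one obtains
\[
	\frac{d}{dt}(\phi_H^t)^*\lambda = (\phi_H^t)^* \mathcal{L}_{X_{H_t}} \lambda = (\phi_H^t)^*\big( \iota_{X_{H_t}} d\lambda + d(\iota_{X_{H_t}}\lambda) \big) = (\phi_H^t)^* d\big( \lambda(X_{H_t}) - H_t \big).
\]
At the point $x$, the scalar $\lambda(X_{H_t}) - H_t$ pulls back by $\phi_H^t$ to $\lambda(\dot\gamma_x(t)) - H(t, \gamma_x(t))$, so integrating from $0$ to $t$ and commuting $d$ with the integral (justified by smoothness of the flow and compactness of $[0,t]$) gives $dA_t = (\phi_H^t)^*\lambda - \lambda$, as claimed.

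Combining the two reductions on $\mathcal{L}$ yields
\[
	d(h_t \circ \phi_H^t) = dh_0 + dA_t|_{\mathcal{L}} = \lambda_{|\mathcal{L}} + \big((\phi_H^t)^*\lambda - \lambda\big)_{|\mathcal{L}} = (\phi_H^t)^*\lambda_{|\mathcal{L}},
\]
and pushing this equality forward by $\phi_H^t$ produces $dh_t = \lambda_{|\mathcal{L}_t}$, which is the desired conclusion. The argument is entirely routine, and the only minor point of care is the exchange of the exterior derivative with the time integral, which raises no difficulty given the smoothness of the Hamiltonian flow on the compact interval of integration; I do not anticipate any substantive obstacle.
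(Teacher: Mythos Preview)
Your proof is correct and takes a genuinely different route from the paper's. You work infinitesimally: you differentiate $(\phi_H^t)^*\lambda$ in $t$, apply Cartan's formula to obtain the classical action identity $dA_t = (\phi_H^t)^*\lambda - \lambda$ on all of $T^*M$, and then restrict to $\mathcal{L}$. The paper instead works integrally in the extended phase space $T^*\mathcal{M}$: it connects two points $x_t, y_t \in \mathcal{L}_t$ by a curve $\sigma_t$, lifts everything to the extended Lagrangian $\mathscr{L} \subset T^*(\mathbb{R}\times M)$, and applies Stokes' theorem to the region $\phi_{\mathscr{H}}^{[0,t]}(\sigma)$, using that $d\Lambda$ vanishes on $\mathscr{L}$ to conclude $h_t(y_t)-h_t(x_t) = \int_{\sigma_t}\lambda$.

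Your argument is more self-contained and elementary: it does not invoke the extended submanifold $\mathscr{L}$ or its Lagrangian property, and it yields the stronger global identity $dA_t = (\phi_H^t)^*\lambda - \lambda$ as a byproduct. The paper's approach, on the other hand, is tailored to the extended framework developed in Section~\ref{Extsection}; it makes the role of $\mathscr{L}$ explicit early on and effectively contains the proof of Proposition~\ref{LExtExact} (exactness of $\mathscr{L}$) within the same computation. Both are standard and equally valid.
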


\begin{proof}
	Fix a positive time $t>0$. Let $x_t = \phi_H^t(x)$ and $y_t = \phi_H^t(y)$ be two points of $\mathcal{L}_t$ linked by a curve $\sigma_t = \phi_H^t(\sigma) : [0,1] \to \mathcal{L}_t$. Set $\gamma_x(\tau) = \phi_H^\tau(x)$ and its extended curve $\zeta_x(\tau) =  \big(\tau, E_x(\tau),\gamma_x(\tau)\big) \in \mathscr{L}$. Since $\mathscr{L} \subset \{ \mathscr{H} = 0 \}$, for all $\tau$, $\mathscr{H}\big(\zeta_x(\tau)\big)=0$ gives
	\begin{equation}
		E_x(\tau) = - H\big(\tau, \gamma_x(\tau)\big)
	\end{equation}
	Similarly, define $\gamma_y(\tau) = \phi_H^\tau(y )$ and its extension $\zeta_y : [0,t] \to \mathscr{L}$. We denote by $\sigma : [0,1] \to \mathscr{L}$ the extention of $\sigma : [0,1] \to \mathcal{L}$. Now set $\overline{\zeta_x}(\tau)=\zeta_x(t-\tau)$ the time-backward curve corresponding to $\zeta_x$ defined on $\tau \in [0,t]$, and $\zeta = \overline{\zeta_x}.\sigma. \zeta_y$ the concatenation of the three curves. Then we have
	\begin{align*}
		h_t(y_t) - h_t(x_t) & = - \int_0^t \big( \gamma_x^* \lambda - H(\tau, \gamma_x(\tau))\big) \; d\tau + [h_0(y) - h_0(x)] + \int_0^t \big( \gamma_y^* \lambda - H(\tau, \gamma_y(\tau)) \big) \; d\tau\\
		&= \int_{\overline{\zeta_x}} \Lambda + \int_\sigma \Lambda + \int_{\zeta_y} \Lambda = \int_\zeta \Lambda = \int_{\phi_\mathscr{H}^{[0,t]}(\sigma)} d\Lambda + \int_{\sigma_t} \Lambda = 0 + \int_{\sigma_t} \Lambda = \int_{\sigma_t} \lambda 
	\end{align*}
	where we used Stokes formula on the set $\phi_\mathscr{H}^{[0,t]}(\sigma) := \bigcup_{\tau \in [0,1]} \phi_\mathscr{H}^\tau(\sigma)$, and we used the fact that $\mathscr{L}$ is a Lagrangian submanifold of $(T^* \mathcal{M}, -d\Lambda)$.
\end{proof}

The wanted extended Liouville primitive is given by the map $\mathpzc{h} : \mathscr{L} \to \mathbb{R}$ defined by
\begin{equation}
	\mathpzc{h}(t,E,q,p) = h_t(q,p)
\end{equation}
where $E$ is such that $(t,E,q,p) \in \mathscr{L}$.\\

\begin{prop} \label{LExtExact}
	$\mathpzc{h}$ is a Liouville primitive on $\mathscr{L}$.
\end{prop}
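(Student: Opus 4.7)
The plan is to verify the pointwise identity $d\mathpzc{h} = \Lambda$ on $T\mathscr{L}$. The key observation is that since $\mathscr{L} \subset \{\mathscr{H} = 0\}$ by \eqref{zero}, the constraint $E + H(\tau,q,p) = 0$ forces $E = -H$ along $\mathscr{L}$, so that $\Lambda|_\mathscr{L} = \lambda - H\, d\tau$ is the classical Poincaré--Cartan form. Next, I would parametrize $\mathscr{L}$ by $\Psi : \mathcal{L} \times \mathbb{R} \to \mathscr{L}$, $\Psi(x, t) = \phi_\mathscr{H}^t(0, -H(0, x), x)$; since $\dot\tau = \partial_E \mathscr{H} = 1$ along the flow, the time slice $\mathscr{L} \cap \{\tau = t\}$ is precisely $\{(t, -H(t, q, p), q, p) : (q, p) \in \mathcal{L}_t\}$, and under this identification $\mathpzc{h}$ restricts to $h_t$.

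I would then decompose every tangent vector $v \in T_z \mathscr{L}$ at $z = (t, -H(t, q, p), q, p)$ into a slice component $w$ (tangent to $\mathscr{L} \cap \{\tau = t\}$) and a component along the Hamiltonian vector field $X_\mathscr{H}(z) = (1, -\partial_t H, \partial_p H, -\partial_q H)$. Since $d\tau(X_\mathscr{H}) = 1 \neq 0$ and $\dim(\mathscr{L}\cap\{\tau=t\}) = d$, these two subspaces span $T_z\mathscr{L}$, which has dimension $d+1$. On slice vectors $w$, the condition $d\tau(w) = 0$ gives $\Lambda(w) = \lambda(w)$; on the other hand $d\mathpzc{h}(w) = dh_t(w_{q,p})$ where $w_{q,p}$ is the projection of $w$ onto $T_{(q,p)} \mathcal{L}_t$, and Proposition \ref{htPrimitive} immediately yields $dh_t(w_{q,p}) = \lambda(w_{q,p}) = \lambda(w)$.

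For the Hamiltonian flow direction, a direct computation gives $\Lambda_z(X_\mathscr{H}) = p \cdot \partial_p H + E \cdot 1 = p\, \partial_p H - H$ using $E = -H$ on $\mathscr{L}$. On the other hand, writing $z = \Psi(x, t)$, the flow yields $\phi_\mathscr{H}^s(z) = \Psi(x, t+s)$, so
\begin{equation*}
d\mathpzc{h}(X_\mathscr{H}) = \frac{d}{ds}\bigg|_{s=0} h_{t+s}(\phi_H^{t+s}(x)),
\end{equation*}
and differentiating the defining formula \eqref{Primitive} under the integral gives the integrand evaluated at $\tau = t$, namely $\lambda(\dot\gamma_x(t)) - H(t,\gamma_x(t)) = p\, \partial_p H - H$. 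Thus $d\mathpzc{h}$ and $\Lambda$ agree on both components of the decomposition, and by linearity $d\mathpzc{h} = \Lambda|_{\mathscr{L}}$.

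The main potential subtlety is justifying the smooth transverse splitting of $T\mathscr{L}$ into the slice-tangent and Hamiltonian-flow directions uniformly over $\mathscr{L}$, but this is automatic from transversality of $X_\mathscr{H}$ to the level sets of $\tau$ together with the parametrization $\Psi$. Otherwise the proof is a clean reduction: the spatial direction is handled entirely by Proposition \ref{htPrimitive}, and the temporal direction reduces to a one-line differentiation of the action integral, with the energy constraint $\mathscr{H} = 0$ supplying the matching term $E = -H$.
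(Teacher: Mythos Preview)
Your proof is correct and follows essentially the same structure as the paper's argument: the paper omits a separate proof precisely because the computation in Proposition~\ref{htPrimitive} already integrates $\Lambda$ along flow lines (where the definition \eqref{Primitive} gives $\int_{\zeta_x}\Lambda = \Delta\mathpzc{h}$ directly) and along slice curves (handled via Stokes), which is the integral counterpart of your pointwise decomposition of $T\mathscr{L}$ into the $X_\mathscr{H}$ direction and the $\{\tau=t\}$-slice direction. Your differential formulation is perhaps a bit cleaner once Proposition~\ref{htPrimitive} is in hand, but the underlying idea---splitting into time-flow and spatial-slice components and using $E=-H$ on $\mathscr{L}$---is identical.
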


We omit the proof of this assertion as it is contained in the proof of the Proposition \ref{htPrimitive} above.

\begin{rem}
	Note that for $s <t$ be two real times and $x \in \mathcal{L}_s$, if we set $\gamma(\tau) = \phi_H^{s,\tau}(x)$, then
	\begin{equation} \label{Prim}
		h_t(\phi_H^{s,t}(x)) = h_s(x) + \int_s^t \big( \gamma^* \lambda - H(\tau, \gamma(\tau)) \big) \; d\tau 
	\end{equation}
\end{rem}

\hfill

In the following, we will link the primitive $\mathpzc{h}$ to the constructed primitives $h^T_t$ and their generating functions $S^T_t$. Fix $T > |t|$, we know that $h_t$ and $h^T_t$ are two Liouville primitives for $\mathcal{L}_t$, hence they differ by a constant $\delta^T_t$ and $h_t = h^T_t +  \delta^T_t$. This constant depends smoothly on $t$ since the paths $(h^T_t)_t$ and $(h_t)_t$ are smooth. Moreover, for the two generating functions $S^T_t$ and $S^T_t+\delta_t$, we have $i_{S^T_t} = i_{S^T_t + \delta_t}$.\\ 

Hence, by replacing $S^T_t$ by the g.f.q.i $S^T_t + \delta_t$, we can assume that for all $T>0$ and $t \in [-T,T]$, 
\begin{equation}\label{ntransl}
	h_t = h^T_t = S^T_t \circ i_{S|\mathcal{L}_t}^{-1}
\end{equation}

From now on, this will be assumed true.

\begin{prop} \label{GFext}
	Let $T>0$ be a fixed real time. Then, with the notation $\mathscr{L}_{(-T,T)} = \mathscr{L} \cap \{ \tau \in (-T,T) \}$ we have the equality
	\begin{equation}
		\mathscr{L}_{(-T,T)} = \big\{\big(\tau, \partial_\tau S^T(\tau,q,\xi), q, d_q S^T(\tau, q,\xi) \big) \; | \; d_\xi S^T(\tau,q,\xi) = 0, \; (\tau,q) \in (-T,T) \times M \big\}
	\end{equation}
\end{prop}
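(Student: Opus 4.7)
\medskip

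\textbf{Plan of proof for Proposition \ref{GFext}.} The first step is to rewrite both sides of the claimed equality in a common form. On the left, I would combine the definition of $\mathscr{L}$ with the inclusion $\mathscr{L} \subset \{\mathscr{H}=0\}$ of equation~\eqref{zero}: since $\mathscr{H}(\tau,E,q,p)=E+H(\tau,q,p)$, the slice of $\mathscr{L}$ at time $\tau$ consists exactly of the points $(\tau,-H(\tau,q,p),q,p)$ with $(q,p) \in \mathcal{L}_\tau$. Hence
\begin{equation*}
    \mathscr{L}_{(-T,T)} = \big\{ \big(\tau, -H(\tau,q,p), q, p \big) \; | \; \tau \in (-T,T), \; (q,p)\in \mathcal{L}_\tau \big\}.
\end{equation*}
On the right, the condition $d_\xi S^T = 0$ combined with the property that $S^T_\tau$ generates $\mathcal{L}_\tau$ says precisely that $(q, d_q S^T(\tau,q,\xi)) \in \mathcal{L}_\tau$. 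So the $(q,p)$-components of both sides already coincide, and the proposition reduces to showing the Hamilton--Jacobi identity
\begin{equation*}
    \partial_\tau S^T(\tau,q,\xi) = - H\big(\tau,q, d_q S^T(\tau,q,\xi)\big) \qquad \text{whenever } d_\xi S^T(\tau,q,\xi) = 0.
\end{equation*}

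To obtain this identity, I would fix a critical point $(\tau_0,q_0,\xi_0) \in \Sigma_{S^T}$ and set $(q_0,p_0) = i_{S^T_{\tau_0}}(q_0,\xi_0) \in \mathcal{L}_{\tau_0}$. Because $i_{S^T_t}$ is a diffeomorphism depending smoothly on $t$, the Hamiltonian trajectory $\gamma(t)=(q(t),p(t)) = \phi_H^{\tau_0,t}(q_0,p_0) \in \mathcal{L}_t$ lifts to a smooth curve $\xi(t)$ with $(t,q(t),\xi(t)) \in \Sigma_{S^T}$ and $p(t) = d_q S^T(t,q(t),\xi(t))$ for $t$ near $\tau_0$. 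Then I would compute $\tfrac{d}{dt}\big[S^T(t,q(t),\xi(t))\big]$ in two ways and equate them.

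The chain-rule computation gives, using $d_\xi S^T \equiv 0$ along the lifted curve,
\begin{equation*}
    \tfrac{d}{dt} S^T(t,q(t),\xi(t)) = \partial_\tau S^T + d_q S^T \cdot \dot q + d_\xi S^T \cdot \dot\xi = \partial_\tau S^T(t,q(t),\xi(t)) + p(t)\,\dot q(t).
\end{equation*}
On the other hand, the normalization \eqref{ntransl} rewrites this quantity as $h_t(\gamma(t))$, and the Liouville-primitive formula \eqref{Prim} applied with $s = \tau_0$ gives
\begin{equation*}
    \tfrac{d}{dt} h_t(\gamma(t)) = p(t)\,\dot q(t) - H(t, q(t), p(t)).
\end{equation*}
Subtracting, the $p\dot q$ terms cancel and one reads off $\partial_\tau S^T(\tau_0,q_0,\xi_0) = -H(\tau_0,q_0,p_0)$, which is the required identity.

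The main delicate point is the construction of the smooth lift $\xi(t)$ and the justification that it covers the whole critical locus: this uses the regular-value assumption in Definition~\ref{gf}, the fact that $i_{S^T_t}: \Sigma_{S^T_t} \to \mathcal{L}_t$ is a diffeomorphism for each $t$, and the smoothness in $t$ of the family $S^T_t$ from Sikorav's Theorem~\ref{Exist}. Once this lift exists and the normalization \eqref{ntransl} is invoked, the Hamilton--Jacobi identity above is purely computational; everything else in the proof is bookkeeping to pass from this pointwise identity back to the set equality.
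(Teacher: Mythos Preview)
Your proof is correct and reaches the same reduction as the paper: both sides agree in the $(q,p)$-components, and everything comes down to the identity $\partial_\tau S^T = E$ (equivalently, $\partial_\tau S^T = -H(\tau,q,d_qS^T)$) along the critical locus. The paper, however, establishes this identity differently. Rather than following a Hamiltonian trajectory, it works infinitesimally: it describes the tangent space $T_{(\tau,q,\xi)}\Sigma_{S^T}$, observes that any $\delta\tau$ can be completed to a tangent vector $(\delta\tau,\delta q,\delta\xi)$, and then computes $dS^T(\delta\tau,\delta q,\delta\xi)$ in two ways --- once directly, once via $S^T = \mathpzc{h}\circ i_{S^T}$ together with $d\mathpzc{h} = \Lambda$ on $\mathscr{L}$ --- obtaining $\partial_\tau S^T\cdot\delta\tau + p\,\delta q = E\,\delta\tau + p\,\delta q$. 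Your approach instead picks a \emph{specific} curve in $\Sigma_{S^T}$ (the lift of the Hamiltonian orbit) and differentiates along it, invoking the integral formula \eqref{Prim} rather than the abstract primitive $\mathpzc{h}$. Your route is more concrete and avoids the tangent-space analysis; the paper's route is more geometric and makes no appeal to the flow. Both are clean, and the ingredients you flag (the regular-value condition, the diffeomorphism $i_{S^T_t}$, and the smoothness in $t$ from Theorem~\ref{Exist}) are exactly what is needed to justify the lift $\xi(t)$.
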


In other words, the map $S^T$ plays the role of a g.f.q.i for $\mathscr{L}_{(-T,T)}$ in $T^* \mathcal{M} = T^* (\mathbb{R} \times M)$.

\begin{proof}
	Let $\tau \in (-T,T)$ and let $(q, p)$ be a point of $\mathcal{L}_\tau$. By definition of the generating functions, we have that $p = d_q S^T( \tau, q, \xi)$ with $(q,\xi) = i_{S^T_\tau}^{-1} (q,p)$. Let $E \in \mathbb{R}$ be the energy such that $(\tau,E,q,p) \in \mathscr{L}$. We know from Theorem \ref{Exist} that $S^T$ is regular with respect to $\tau$ and thus a derivation is possible. We need to prove that $E = \partial_\tau S^T( \tau, q, \xi)$.
	
	We define the set $\Sigma_{S^T}$ by 
	\begin{equation}
		\Sigma_{S^T} := \big\{(\tau, q,\xi) \in (-T,T) \times M \times \mathbb{R}^{k_T} \; | \; d_\xi S^T(\tau, q, \xi) =0 \big\}
	\end{equation}
	so that the map $S^T(\tau,q,\xi) = h_\tau \circ i_{S^T}(\tau,q,\xi)$ is defined on $\Sigma_{S^T}$. We need to differentiate with respect to $\tau$. To do so, we determine $T_{(\tau,q,\xi)}\Sigma_{S^T}$ given by differentiating the equation $d_\xi S^T(\tau, q, \xi) =0$. Hence,
	\begin{equation}
		T_{(\tau,q,\xi)}\Sigma_{S^T} = \{ (\delta \tau, \delta q, \delta \xi) \; | \; d^2_{\tau\xi} S^T(\tau,q,\xi). \delta \tau + d^2_{q\xi} S^T(\tau,q,\xi). \delta q + d^2_{\xi\xi} S^T(\tau,q,\xi). \delta \xi =0 \}
	\end{equation}
	Moreover, since $S^T_\tau$ is a generating function of $\Sigma_{S^T_\tau}$, we know that the map $(\delta q , \delta \xi ) \mapsto d^2_{q\xi} S^T(\tau,q,\xi). \delta q + d^2_{\xi\xi} S^T(\tau,q,\xi). \delta \xi$ is onto. Hence, for all $\delta\tau$, there exist $(\delta q,\delta \xi)$ such that $(\delta \tau,  \delta q, \delta \xi) \in T_{(\tau,q,\xi)}\Sigma_{S^T}$.
	
	Take such a vector $(\delta \tau,  \delta q, \delta \xi) \in T_{(\tau,q,\xi)}\Sigma_{S^T}$. We have
	\begin{align*}
		dS^T(t,q,\xi).(\delta \tau,  \delta q, \delta \xi) &= \partial_\tau S^T(\tau,q,\xi). \delta \tau + d_q S^T(\tau,q,\xi). \delta q + d_\xi S^T(\tau,q,\xi). \delta \xi \\
		&= \partial_\tau S^T(\tau,q,\xi). \delta \tau + p\delta q
	\end{align*}
	and
	\begin{align*}
		dS^T(t,q,\xi).(\delta \tau,  \delta q, \delta \xi) &= d(h^T \circ i_{S^T})(t,q,\xi).(\delta \tau,  \delta q, \delta \xi)\\
		&= dh^T(t,q,p) \circ di_{S^T}(t,q,\xi) .(\delta \tau,  \delta q, \delta \xi) \\
		&= \Lambda(t,q,p) \circ di_{S^T}(t,q,\xi) .(\delta \tau,  \delta q, \delta \xi)
		\\&= \begin{pmatrix}
			E & p & 0
		\end{pmatrix} 
		\begin{pmatrix}
			1 & 0 & 0 \\
			0 & 1 & 0 \\
			d^2_{tq}S^T(t,q,\xi) & d^2_{qq}S^T(t,q,\xi) & 0
		\end{pmatrix}
		\begin{pmatrix}
			\delta \tau \\
			\delta q \\
			\delta \xi
		\end{pmatrix} \\
		&= \begin{pmatrix}
			E & p & 0
		\end{pmatrix} 
		\begin{pmatrix}
			\delta \tau \\
			\delta q \\
			\delta \xi
		\end{pmatrix}
		= E \delta \tau + p \delta q
	\end{align*}
	where we use that $h$ derives from $\mathpzc{h}$, Liouville primitive on $\mathscr{L}$. We obtain
	\begin{equation*}
		(\partial_\tau S^T(\tau,q,\xi)-E) \delta \tau = p\delta q - p\delta q = 0
	\end{equation*}    
	Taking $\delta \tau \neq 0$, which is possible by the study of $T_{(\tau,q,\xi)}\Sigma_{S^T}$, we conclude that $\partial_t S^T(t,q,\xi) = E$. Therefore, $(t,E,q,p) \in \mathscr{L}$ if and only if $p = d_q S^T( t, q, \xi)$ and $E = \partial_t S^T(t,q,\xi)$ with $(q,\xi) = i_{S^T_t}^{-1} (q,p)$.
\end{proof}

\section{Spectral Invariants and Graph Selectors} \label{GSsection}

In this section we present the construction and properties of spectral invariants rising from generating functions. Subsections \ref{SIsubsection} and \ref{SI2} contain the construction and properties of these spectral invariants. And in Subsection \ref{GSsubsection} we introduce the notion of graph selectors which will play a fundamental role in the proof of the main result.\\

All material in this section is known and considered standard. Nevertheless, we chose to provide the proofs of all the properties needed in the final proof of this paper, following following \cite{MR1157321}. This may provide sufficient background for the reader unfamiliar with spectral invariants and graph selectors.

\subsection{Spectral Invariants} \label{SIsubsection}

In this section, we define the spectral invariants of functions $S:E = \mathcal{M} \times \mathbb{R}^k \to \mathbb{R}$ that are quadratic at infinity, i.e such that there exist a real constant $c \in \mathbb{R}$ and a non-degenerate quadratic form $Q : \mathbb{R}^k \to \mathbb{R}$ such that $S = c + Q$ outside of a compact set of $E$. We will denote f.q.i for functions quadratic at infinity.\\

Fix such an f.q.i $S$ with associated pair $(c,Q)$. The domain $\mathbb{R}^k$ of the non-degenerate quadratic form $Q$ decomposes into $F^+ \oplus F^-$ which are the positive and negative spaces for $Q$ and $m= \dim F^-$ is the index of $Q$.\\

For any real number $a \in \mathbb{R}$, we denote the sub-level of $S$ with height $a$ by
\begin{equation}
	S^a = \{ e \in E \; | \; S(e) \leq a \}
\end{equation}

Since $S= c+Q$ outside of a compact set, there exists a large $N>0$ such that $S^N = \mathcal{M} \times Q^{N-c}$ and $S^{-N} = \mathcal{M} \times Q^{-N-c}$. Hence, for any $a \in \mathbb{R}$ and large $b \geq N$, the homotopy types of the pairs $(S^b,S^a)$ and $(S^a, S^{-b})$ are independent of $b$. We denote these respectively by $(S^\infty,S^a)$ and $(S^a, S^{-\infty})$.\\

Let $H^*$ denote the simplicial cohomology with coefficients in a field, namely $\mathbb{R}$. By a successive application of K\"unneth formula and Thom's isomorphism to the trivial fibre bundle $\mathcal{M} \times F^- \to \mathcal{M}$, one gets
\begin{equation} \label{Thom}
		H^*(S^\infty, S^{- \infty}) \simeq H^*( \mathcal{M} ) \otimes H^*(Q^\infty, Q^{-\infty}) \simeq H^*( \mathcal{M} ) \otimes H^*(\mathbb{D}^m, \partial \mathbb{D}^m)  \simeq H^{*-m}( \mathcal{M} )
\end{equation}
where $\mathbb{D}^m$ is the $m$-dimensional disk.\\

Moreover, the inclusion $S^a \hookrightarrow S^b$ for $a<b$ induces the cohomological morphism $ H^*(S^b, S^{- \infty}) \hookrightarrow H^*(S^a, S^{- \infty})$. The existence of this map added to the fact that $H^*(S^{- \infty}, S^{- \infty}) = 0$ makes sense of the following definition.

\begin{defi} 
	Let $S:E = \mathcal{M} \times \mathbb{R}^k \to \mathbb{R}$ be a f.q.i. For all $\alpha \in H^*(\mathcal{M}) \setminus \{0\}$ we define the spectral invariant $c(\alpha,S)$ by
	\begin{equation} \label{SpectralInvariantsDef}
		c(\alpha,S) := \inf \{ a \in \mathbb{R} \; | \; \alpha \neq 0 \text{ in } H^*(S^a, S^{- \infty} ) \}
	\end{equation}
\end{defi}

\begin{rem}
	For all real numbers $a<b<c$, the inclusions $S^a \hookrightarrow S^b \hookrightarrow S^c$ yield the commutative diagram  
	\begin{equation}
		\begin{tikzcd}
			H^*(S^c, S^{- \infty} )  \arrow{r}  \arrow{rd} & H^*(S^b, S^{- \infty} )  \arrow{d} \\
   			& H^*(S^a, S^{- \infty} ) 
		\end{tikzcd}
	\end{equation}
	so that if $\alpha_q$ is null in $H^*(S^b, S^{- \infty})$, then it is null in $H^*(S^a, S^{- \infty})$ for all $a<b$, and we have
	\begin{equation}
		c(\alpha,S) = \sup \{ a \in \mathbb{R} \; | \; \alpha = 0 \text{ in } H^*(S^a, S^{- \infty} ) \}
	\end{equation}           
\end{rem}         

\begin{prop} \label{Crit}
	For any f.q.i $S$, $c(\alpha, S)$ is a critical value of $S$.
\end{prop}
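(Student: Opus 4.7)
The proof proposal is a standard min–max argument via the deformation lemma applied to a pseudo-gradient of $S$. I argue by contradiction: assume that $c = c(\alpha, S)$ is a regular value of $S$, and deduce that $H^*(S^{c+\epsilon}, S^{-\infty})$ and $H^*(S^{c-\epsilon}, S^{-\infty})$ agree for small $\epsilon > 0$, contradicting the definition of $c$.

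First I would equip $E = \mathcal{M} \times \mathbb{R}^k$ with a complete Riemannian metric that coincides, outside the compact set $K \subset E$ where $S$ differs from $c_0 + Q$, with a product metric for which $\nabla Q$ takes its standard form. Since $c$ is assumed regular and critical values are isolated on the compact part, one can choose $\epsilon > 0$ such that $S^{-1}([c-\epsilon, c+\epsilon])$ contains no critical point of $S$. On this strip, the pseudo-gradient vector field $-\nabla S / (1 + \|\nabla S\|^2)$ (suitably cut off to vanish below $S^{c-2\epsilon}$) is globally defined, complete, and decreases $S$ at a uniform rate: outside a compact part this follows from $S = c_0 + Q$, where the gradient flow is the explicit hyperbolic flow of a non-degenerate quadratic form, and on the compact part it follows from the absence of critical points.

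Integrating this vector field for sufficiently long time yields a continuous map $\Phi : E \to E$ with $\Phi(S^{c+\epsilon}) \subset S^{c-\epsilon}$ and $\Phi|_{S^{-\infty}} = \mathrm{id}$; coupled with the obvious straight-line homotopy along flow lines, this produces a strong deformation retraction of the pair $(S^{c+\epsilon}, S^{-\infty})$ onto $(S^{c-\epsilon}, S^{-\infty})$. Hence the inclusion
\begin{equation*}
    (S^{c-\epsilon}, S^{-\infty}) \hookrightarrow (S^{c+\epsilon}, S^{-\infty})
\end{equation*}
induces an isomorphism on $H^*$. However, by the definition \eqref{SpectralInvariantsDef} together with the remark preceding the proposition, the class $\alpha$ is nonzero in $H^*(S^{c+\epsilon}, S^{-\infty})$ but zero in $H^*(S^{c-\epsilon}, S^{-\infty})$, which is the desired contradiction.

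The main obstacle I expect is the non-compactness of $E$: in this paper $\mathcal{M} = \mathbb{R} \times M$ is not compact, so a naive gradient flow need not be complete nor produce uniformly bounded trajectories. The quadratic-at-infinity condition is precisely what cures this, since outside a compact subset of $E$ the flow is that of $c_0 + Q$, whose sub-levels are products $\mathcal{M} \times Q^a$ preserved by the $Q$-flow; this guarantees both completeness of the pseudo-gradient flow and the fact that the homotopy types $(S^\infty, S^a)$ and $(S^a, S^{-\infty})$ used to define the spectral invariants are indeed realised by a genuine deformation retraction onto the compact region where the nontrivial topology lives.
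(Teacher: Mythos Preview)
Your proof is correct and follows essentially the same approach as the paper: argue by contradiction, use that critical points lie in a compact set (from the quadratic-at-infinity condition) to find an $\epsilon$-strip free of critical points, and push $S^{c+\epsilon}$ into $S^{c-\epsilon}$ by a cut-off gradient flow to obtain $H^*(S^{c+\epsilon},S^{-\infty})\simeq H^*(S^{c-\epsilon},S^{-\infty})$. The only cosmetic differences are that the paper phrases the last step via the long exact sequence of the triple $(S^{c+\epsilon},S^{c-\epsilon},S^{-\infty})$ rather than a retraction of pairs, and that your remark ``critical values are isolated'' is unnecessary (and not generally true): all you need---and what the paper uses---is that the critical-value set is closed, which already follows from compactness of the critical locus.
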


\begin{proof}
	Assume $c = c(\alpha,S)$ is not a critical value of $S$. Since $S$ is equal to a non-degenerate quadratic form outside of a compact set, its critical points must be contained in a compact set. Hence there exists a small $\varepsilon > 0$ such that $S$ has no critical values in $[c- \varepsilon, c+\varepsilon]$ and the gradient of $S$ is non-null in $S^{c+\varepsilon} \setminus \mathring{S}^{c - \varepsilon } = S^{-1}([c- \varepsilon, c+\varepsilon])$. Therefore, $S^{c+\varepsilon}$ can be contracted to $S^{c - \varepsilon }$ using a gradient flow generated by a vector field of the form $X = - \chi \frac{\nabla S}{\Vert \nabla S\Vert ^2}$ where $\chi$ is a bump map identically equal to zero in $S^{c+\varepsilon} \setminus S^{c - \varepsilon }$. This shows that $H^*(S^{c+\varepsilon}, S^{c - \varepsilon }) = 0$.\\
	The long exact sequence for the triple $(S^{c+\varepsilon}, S^{c - \varepsilon }, S^{- \infty}) $ is
	\begin{equation}
		H^*(S^{c+\varepsilon}, S^{c - \varepsilon }) \rightarrow H^*(S^{c-\varepsilon}, S^{- \infty}) \rightarrow H^*(S^{c+\varepsilon}, S^{- \infty}) \rightarrow H^{*+1}(S^{c+\varepsilon}, S^{c - \varepsilon })
	\end{equation}
	with the two extremities being null. This results to the isomorphism
	\begin{equation} \label{isom}
		H^*(S^{c-\varepsilon}, S^{- \infty}) \simeq H^*(S^{c+\varepsilon}, S^{- \infty})
	\end{equation}
	However, by the definition of $c$ we have $\alpha \neq 0$ in $H^*(S^{c+\varepsilon}, S^{- \infty})$ and $\alpha = 0$ in $H^*(S^{c-\varepsilon}, S^{- \infty})$. We get a contradiction to (\ref{isom}).
\end{proof}

As an application to generating functions, we obtain the following proposition.
\begin{prop} \label{inegh}
	Let $\mathcal{L}$ be an exact Lagrangian submanifold of $T^* \mathcal{M}$ with g.f.q.i $S$ and Liouville primitive $h=S\circ i_{S|\mathcal{L}}^{-1}$. Then for all $\alpha \in H^*( \mathcal{M})$,
	\begin{equation}
		\min h \leq c(\alpha, S) \leq \max h
	\end{equation}
\end{prop}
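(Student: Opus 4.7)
My plan is to combine Proposition~\ref{Crit}, which asserts that $c(\alpha,S)$ is a critical value of $S$, with the observation that critical values of an f.q.i.\ are exactly the values of the associated Liouville primitive $h$ at the zero-section intersection points of $\mathcal{L}$. Once that identification is made, the bound $\min h \leq c(\alpha,S) \leq \max h$ becomes tautological.

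First I would invoke Proposition~\ref{Crit} to produce a point $(q^*,\xi^*) \in E = \mathcal{M}\times \mathbb{R}^k$ satisfying $S(q^*,\xi^*) = c(\alpha,S)$ and $dS(q^*,\xi^*) = 0$. Splitting the differential along the base and the fibre yields $d_q S(q^*,\xi^*) = 0$ and $d_\xi S(q^*,\xi^*) = 0$. The second equation places $(q^*,\xi^*)$ in the critical locus $\Sigma_S$, so $i_S$ is defined at this point, and
\[
i_S(q^*,\xi^*) = \bigl(q^*,\, d_q S(q^*,\xi^*)\bigr) = (q^*,0),
\]
so the corresponding point of $\mathcal{L}$ lies in the intersection $\mathcal{L} \cap 0_{T^*\mathcal{M}}$.

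The identification with $h$ is then immediate from its very definition: since $h = S \circ i_{S|\mathcal{L}}^{-1}$, one has
\[
c(\alpha,S) = S(q^*,\xi^*) = h\bigl(i_S(q^*,\xi^*)\bigr) = h(q^*,0),
\]
so $c(\alpha,S)$ coincides with the value of $h$ at some intersection point of $\mathcal{L}$ with the zero section, and a fortiori $\min h \leq c(\alpha,S) \leq \max h$.

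The proof is essentially a direct unwinding of Proposition~\ref{Crit} together with the defining properties of a generating function, and I do not foresee any substantive obstacle. The only subtle point is that one tacitly uses the existence of a critical point of $S$ realising the value $c(\alpha,S)$; this is ensured by the quadratic-at-infinity hypothesis, which confines the critical points of $S$ to a compact subset of $E$ and guarantees that the spectral invariant $c(\alpha,S)$, for any nonzero class $\alpha \in H^*(\mathcal{M})$, is finite and attained.
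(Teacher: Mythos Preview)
Your proof is correct and follows essentially the same route as the paper: invoke Proposition~\ref{Crit} to obtain a critical point $(q^*,\xi^*)$ of $S$ with value $c(\alpha,S)$, observe that $d_\xi S(q^*,\xi^*)=0$ places it in $\Sigma_S$, and use $h=S\circ i_S^{-1}$ to identify $c(\alpha,S)$ with a value of $h$. Your additional remark that $i_S(q^*,\xi^*)=(q^*,0)\in\mathcal{L}\cap 0_{T^*\mathcal{M}}$ is a nice extra observation the paper leaves implicit.
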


\begin{proof}
	We know from Proposition \ref{Crit} that $c(\alpha, S)$ is a critical value of $S$. Then there exists $(q, \xi)$ such that $S(q,\xi) = c(\alpha, S)$ and $dS(q,\xi) =0$. It follows that $d_\xi S(q,\xi) =0$, $(q,\xi) \in \Sigma_S$ and
	\begin{align*}
		h(x) = S \circ i_S^{-1}(x) = S(q,\xi) = c(\alpha, S)
	\end{align*}
	Therefore we conclude that $\min h \leq c(\alpha, S) \leq \max h$.
\end{proof}

For any f.q.i $S$, we denote by $\Vert S\Vert _\infty$ the quantity in $[0,+\infty]$ given by
\begin{equation}
	\Vert  S \Vert _\infty = \sup \{ |S(q,\xi)| \; | \; (q, \xi) \in M \times \mathbb{R}^k \}
\end{equation}

\begin{prop} \label{Cont}
	For any f.q.i $S_1$, there exists a real number $\varepsilon_0 >0$ small enough such that for all $0<\varepsilon< \varepsilon_0$ and all f.q.i $S_2$ such that $\Vert S_2 - S_1 \Vert_\infty \leq \epsilon $, we have $|c(\alpha,S_2) - c(\alpha,S_1)| \leq \varepsilon$.
\end{prop}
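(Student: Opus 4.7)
The approach I would take is based on sublevel-set monotonicity under the pointwise inequalities $S_1 - \varepsilon \leq S_2 \leq S_1 + \varepsilon$, which is the content of $\|S_1-S_2\|_\infty \leq \varepsilon$. First, since the difference of two globally $\varepsilon$-close f.q.i.'s is bounded, their asymptotic nondegenerate quadratic parts must be identical: writing both as $c_i + Q$ outside a common compact, we get $|c_1-c_2|\leq \varepsilon$, and the Thom--K\"unneth chain producing the isomorphism $H^*(S_i^\infty, S_i^{-\infty}) \simeq H^{*-m}(\mathcal{M})$ of (\ref{Thom}) is the same for $S_1$ and $S_2$. The role of $\varepsilon_0$ is simply to ensure that a single large $N$ can be chosen once and for all so that $S_i^{-N\pm\varepsilon}$ realizes the ``$-\infty$'' homotopy type, for every $\varepsilon<\varepsilon_0$.

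From the pointwise inequalities one deduces the sublevel inclusions $S_1^{t} \subset S_2^{t+\varepsilon}$ and $S_2^{t} \subset S_1^{t+\varepsilon}$ for every $t\in\mathbb{R}$. In particular, for $N$ large, the inclusion of pairs $(S_1^{a-\varepsilon}, S_1^{-N-\varepsilon}) \hookrightarrow (S_2^a, S_2^{-N})$ is well defined, and after identifying $S_i^{-N\pm\varepsilon}$ with ``$-\infty$'' it induces a natural cohomology map
\[
\Phi \;:\; H^*(S_2^a, S_2^{-\infty}) \longrightarrow H^*(S_1^{a-\varepsilon}, S_1^{-\infty}).
\]

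The heart of the proof is then to check that $\Phi$ intertwines the distinguished classes: the class in $H^*(S_2^a, S_2^{-\infty})$ obtained from $\alpha \in H^*(\mathcal{M})$ via the Thom--K\"unneth identification and restriction from $H^*(S_2^\infty, S_2^{-\infty})$ is sent by $\Phi$ to the corresponding class in $H^*(S_1^{a-\varepsilon}, S_1^{-\infty})$. I would establish this by interpolating via the straight-line homotopy $S_t = (1-t)S_1 + tS_2$, noting that every $S_t$ is an f.q.i with the same asymptotic form $Q$, and invoking the naturality of the Thom isomorphism along this family (the trivial bundle $\mathcal{M} \times F^- \to \mathcal{M}$ is constant throughout the deformation). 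This compatibility step is the main technical obstacle, and is essentially the only non-formal part of the argument.

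Once $\Phi$ is known to send $\alpha$ to $\alpha$, the statement follows by a short diagram chase: if $b > c(\alpha, S_1)$, then $\alpha \neq 0$ in $H^*(S_1^b, S_1^{-\infty})$, and applying $\Phi$ with $a := b+\varepsilon$ one deduces that $\alpha \neq 0$ in $H^*(S_2^{b+\varepsilon}, S_2^{-\infty})$, hence $c(\alpha, S_2) \leq b+\varepsilon$. Passing to the infimum gives $c(\alpha, S_2) \leq c(\alpha, S_1) + \varepsilon$; swapping the roles of $S_1$ and $S_2$ yields the matching lower bound, and together they give $|c(\alpha, S_1) - c(\alpha, S_2)| \leq \varepsilon$.
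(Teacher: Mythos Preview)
Your argument is correct and follows essentially the same route as the paper: use the pointwise bound to obtain sublevel inclusions, pass to cohomology, check that the induced map is compatible with the class $\alpha$ coming from $H^*(\mathcal{M})$ via the isomorphism at infinity, and conclude by a diagram chase plus symmetry. The only cosmetic differences are that the paper phrases the final step with an auxiliary $\delta>0$ instead of an infimum over $b>c(\alpha,S_1)$, and it establishes the compatibility of the $\alpha$-classes simply by asserting the commutative square
\[
\begin{tikzcd}
H^*(S_2^{\infty},S_2^{-\infty}) \arrow{d} \arrow[r,"\sim"]  & H^*(S_1^{\infty},S_1^{-\infty}) \arrow{d} \\
H^*(S_2^{c_1+\delta+\varepsilon}, S_2^{-\infty}) \arrow{r}{i^*}  & H^*(S_1^{c_1+\delta}, S_1^{-\infty})
\end{tikzcd}
\]
rather than invoking the straight-line homotopy you describe (your observation that the asymptotic quadratic forms must in fact coincide, not merely share an index, is a slightly sharper justification of this step).
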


\begin{proof}
	To simplify the notations, set $c_1 =c(\alpha,S_1)$. For $\varepsilon$ small enough, the non-degenerate quadratic forms at infinity associated to $S_1$ and $S_2$ have same index. Hence, we infer that $S_1^{\pm\infty}$ and $S_2^{\pm\infty}$ are homotopically equivalent and we obtain an isomorphism $H^*(S_1^{\infty},S_1^{-\infty}) \simeq H^*(S_2^{\infty},S_2^{-\infty})$ sending the element $\alpha_1$ corresponding to $\alpha$ in $H^*(S_1^{\infty},S_1^{-\infty})$ to the element $\alpha_2$ corresponding to $\alpha$ in $H^*(S_2^{\infty},S_2^{-\infty})$.
	
	 Now fix a real number $\delta >0$. We know from the hypothesis that $S_2 \leq S_1 + \varepsilon$ and more precisely that $S_1^{c_1+\delta} \subset S_2^{c_1+\delta+\varepsilon}$. This inclusion yields the cohomological morphism 
	\begin{equation}
		i^*: H^*(S_2^{c_1+\delta+\varepsilon}, S_2^{-\infty}) \rightarrow H^*(S_1^{c_1+\delta}, S_1^{- \infty})
	\end{equation}
	and we obtain a commutative diagram
	\begin{equation}
		\begin{tikzcd}
			H^*(S_2^{\infty},S_2^{-\infty}) \arrow{d} \arrow[r,"\sim"]  & H^*(S_1^{\infty},S_1^{-\infty}) \arrow{d} \\
			H^*(S_2^{c_1+\delta+\varepsilon}, S_2^{-\infty}) \arrow{r}{i^*}  & H^*(S_1^{c_1+\delta}, S_1^{- \infty})
		\end{tikzcd}	
	\end{equation}	
	By definition of $c_1$, we know that $i^*(\alpha_2) = \alpha_1 \neq 0$ in $H^*(S_1^{c_1+\delta}, S_1^{- \infty})$. Thus, $\alpha_2 \neq 0$ in $H^*(S_2^{c_1+\delta+\varepsilon}, S_2^{-\infty})$ meaning that $c(\alpha,S_2) \leq c_1+\delta+\varepsilon$. By letting $\delta$ go to zero, we conclude that
	\begin{equation}
		c(\alpha,S_2) - c(\alpha,S_1) \leq \varepsilon
	\end{equation}
	The symmetry between $S_1$ and $S_2$ yields the inverse inequality.
\end{proof}

\subsection{Operations on Spectral Invariants} \label{SI2}

We focus in this subsection on two key operations on Lagrangian submanifolds and their induced properties on spectral invariants. These will show crucial in the proof of the main theorem.

\begin{defi}
	\begin{enumerate}
		\item For a Lagrangian submanifold $\mathcal{L}$ in $T^* \mathcal{M}$ we define its inverted Lagrangian submanifold $\overline{\mathcal{L}}$ as
		\begin{equation} \label{bar}
			\overline{\mathcal{L}} := \{ (q,-p) \; | \; (q,p) \in \mathcal{L} \}
		\end{equation}
		If $S$ is a generating function of $\mathcal{L}$, $-S$ is a generating function of $\overline{\mathcal{L}}$.
		\item For two Lagrangian submanifolds $\mathcal{L}_1$ and $\mathcal{L}_2$ in $T^* \mathcal{M}$ we define their fibred sum $\mathcal{L}_1 \string#  \mathcal{L}_2$ as the set
		\begin{equation}
			\mathcal{L}_1 \string#  \mathcal{L}_2 := \{ (q , p_1+p_2) \; | \; (q,p_1) \in \mathcal{L}_1, \; (q,p_2) \in \mathcal{L}_2 \}
		\end{equation}
		If $S_1(q,\xi_1)$ and $S_2(q,\xi_2)$ are two respective generating functions of $\mathcal{L}_1$ and $\mathcal{L}_2$, then we associate to $\mathcal{L}_1 \string#  \mathcal{L}_2$ the fibred sum $S_1 \oplus S_2$ given by
		\begin{equation}
			S_1 \oplus S_2(q,\xi_1,\xi_2) := S_1(q,\xi_1) + S_2(q,\xi_2)
		\end{equation}
	\end{enumerate}
\end{defi}

\begin{rem}
	Note that $\mathcal{L}_1 \string#  \mathcal{L}_2$ is not necessarily a submanifold and hence not a Lagrangian submanifold of $T^* \mathcal{M}$, but the fibred sum $S : = S_1 \oplus S_2$ still generates $\mathcal{L}_1 \string#  \mathcal{L}_2$ in the sense that
	\begin{equation*}
		\mathcal{L}_1 \string#  \mathcal{L}_2 = \{ (q, d_qS(q,\xi_1,\xi_2) \; | \; d_{(\xi_1,\xi_2)}S(q,\xi_1,\xi_2) =0 \}
	\end{equation*}
\end{rem}

The function $S_1 \oplus S_2$ is not quadratic at infinity. This issue is solved by the following proposition.

\begin{prop} \label{FibredGFQI}  
	Let $\mathcal{L}_1$ and $\mathcal{L}_2$ be two Lagrangian submanifolds generated by g.f.q.i $S_1$ and $S_2$ with corresponding constant and quadratic form pairs $(c_1,Q_1)$ and $(c_2,Q_2)$. Then, $\mathcal{L}_1 \string#  \mathcal{L}_2$ is generated by a g.f.q.i $S$ with corresponding pair $(c_1+c_2,Q_1 \oplus Q_2)$, and there exists a diffeomorphism $\phi$ of $T^*\mathcal{M}$ such that $S \circ \phi = S_1 \oplus S_2$.
\end{prop}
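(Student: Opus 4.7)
The plan is to split the argument into two pieces: first verify that $S_1 \oplus S_2$ generates $\mathcal{L}_1 \string# \mathcal{L}_2$ in the generalized sense laid out in the preceding remark, and then modify it via a fiber-preserving diffeomorphism to obtain a genuine g.f.q.i with the prescribed quadratic form and constant at infinity.

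For the generation property, I would compute directly that the fiber equation $d_{(\xi_1,\xi_2)}(S_1 \oplus S_2)(q,\xi_1,\xi_2) = 0$ decouples into $d_{\xi_1}S_1(q,\xi_1) = 0$ and $d_{\xi_2}S_2(q,\xi_2) = 0$, so the fiber critical locus is the fibered product $\Sigma_{S_1} \times_{\mathcal{M}} \Sigma_{S_2}$. On this set $d_q(S_1 \oplus S_2) = d_qS_1(q,\xi_1) + d_qS_2(q,\xi_2) = p_1 + p_2$ with $(q,p_i) \in \mathcal{L}_i$, which matches the definition of $\mathcal{L}_1 \string# \mathcal{L}_2$ and confirms the generalized generation statement made in the remark preceding the proposition.

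The substantive step is the quadratic-at-infinity property. Writing $S_i = c_i + Q_i(\xi_i) + R_i(q,\xi_i)$ with each $R_i$ compactly supported in $\mathcal{M} \times \mathbb{R}^{k_i}$, we obtain $S_1 \oplus S_2 = (c_1+c_2) + (Q_1 \oplus Q_2)(\xi_1,\xi_2) + R_1(q,\xi_1) + R_2(q,\xi_2)$, but $\Supp(R_1 + R_2)$ is a union of two cylinders in $\mathcal{M} \times \mathbb{R}^{k_1+k_2}$ and is not compact. I would construct a fiber-preserving diffeomorphism $\phi(q,\xi_1,\xi_2) = (q, \psi_q(\xi_1,\xi_2))$ that equals the identity on a large neighborhood of the critical set of $S_1 \oplus S_2$ and that, outside this neighborhood, realizes an affine change of fiber coordinates absorbing the cylindrical tails of $R_1 + R_2$ into the quadratic asymptotics. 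Setting $S := (S_1 \oplus S_2) \circ \phi^{-1}$, the relation $S \circ \phi = S_1 \oplus S_2$ holds tautologically; $S$ equals $(c_1 + c_2) + (Q_1 \oplus Q_2)$ outside a compact subset of $\mathcal{M} \times \mathbb{R}^{k_1+k_2}$; and since $\phi$ is the identity on the critical set of $S_1 \oplus S_2$, the map $i_S$ coincides there with $i_{S_1 \oplus S_2}$, so $\mathcal{L}_S = \mathcal{L}_1 \string# \mathcal{L}_2$.

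I expect the main obstacle to be the explicit design of $\phi$: one must verify the invertibility of $\psi_q$, the smoothness of the interpolation between the identity on the critical region and the tail-straightening at infinity, and that this interpolation actually produces the prescribed pair $(c_1 + c_2, Q_1 \oplus Q_2)$ rather than a merely equivalent quadratic form. A more abstract alternative would combine the existence theorem (Theorem \ref{Exist}) for a g.f.q.i on $\mathcal{L}_1 \string# \mathcal{L}_2$, when the latter is itself Lagrangian and isotopic to the zero section, with the uniqueness theorem (Theorem \ref{Uniq}); but identifying that g.f.q.i with $S_1 \oplus S_2$ up to a fiber diffeomorphism would still rest on the explicit construction above.
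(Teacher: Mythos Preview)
Your overall strategy matches the paper's: verify that $S_1\oplus S_2$ generates $\mathcal{L}_1\string#\mathcal{L}_2$, then produce a fiber-preserving diffeomorphism making it quadratic at infinity. The missing piece is precisely the construction of $\phi$, which you correctly flag as the obstacle. The specific mechanism you propose --- an ``affine change of fiber coordinates'' outside a neighborhood of the critical set --- cannot work as stated: a smooth fiber map that is the identity on an open set and affine elsewhere is globally the identity, and more generally no affine reparametrization of $\xi$ absorbs an arbitrary bounded perturbation into a fixed quadratic form.

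The paper supplies $\phi$ by Moser's method. Write $S^0=S_1\oplus S_2$ and $Q=Q_1\oplus Q_2$; the key observation is that although $S^0-Q=R_1+R_2$ does not have compact support, its $C^1$ norm is uniformly bounded (each $R_i$ is compactly supported in its own factor). Choose a radial cutoff $\rho$ vanishing on $\{|\xi|\le A\}$ and equal to $1$ on $\{|\xi|\ge B\}$ with $|\rho'|$ small, and set $S^t=(1-t\rho)S^0+t\rho\,Q$, so $S:=S^1$ is a genuine g.f.q.i.\ with pair $(c_1+c_2,Q)$. Differentiating the desired relation $S^t\circ\phi^t=S^0$ yields the linear equation $\langle\partial_\xi S^t,\sigma_t\rangle=-\rho\,(S^0-Q)$ for the fiber component $\sigma_t$ of the generating vector field; on $\{|\xi|\ge A\}$ one has $|\partial_\xi S^t|\ge C'|\xi|-\text{const}>0$ for $A$ large, since $|\partial_\xi Q|$ grows linearly while $|\partial_\xi(S^0-Q)|$ is bounded. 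Hence $\sigma_t$ exists, is bounded, and its flow is complete; taking $\phi=\phi^1$ finishes the proof without any explicit coordinate formula. This is the step your outline leaves open.
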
       
\begin{proof}
	For simplicity, we assume that $c_1 = c_2 = 0$. We set $S^0:= S_1 \oplus S_2$, $Q:= Q_1 \oplus Q_2$. We have
	\begin{equation*}
		S^0-Q = S_1 \oplus S_2 - Q_1 \oplus Q_2 = (S_1 - Q_1) \oplus (S_2 - Q_2)
	\end{equation*}
	with the supports of $S_1 - Q_1$ and $S_2 - Q_2$ being both compact in their domains of definition. Hence, there exists a constant $C_1$ and $C_2>0$ such that 
	\begin{equation*}
		\Vert S_1-Q_1 \Vert_1 := \Vert S_1-Q_1 \Vert_\infty + \Vert \nabla( S_1-Q_1 )\Vert_\infty \leq C_1 \quad \text{and} \quad \Vert S_2-Q_2 \Vert_1 \leq C_2
	\end{equation*}
	and setting $C = C_1 + C_2$, we get
	\begin{equation} \label{FibredGFQIDem1}
		\Vert S^0-Q \Vert_1 \leq \Vert S_1-Q_1 \Vert_1  + \Vert S_2-Q_2 \Vert_1  \leq C1+C_2 = C
	\end{equation}
	
	Let $B>A>0$ be large real numbers and consider an increasing smooth map $\rho : [0, +\infty) \to [0,1]$ such that $\rho \equiv 0$ on $[0,A]$, $\rho \equiv 1$ on $[B+\infty)$ and $|\rho'| \leq \varepsilon$ small enough. We consider the map
	\begin{equation*}
		S^1(q,\xi) = \rho(\xi)Q(q,\xi) + (1-\rho(\xi))S^0(q,\xi)
	\end{equation*}
	and the family $(S^t)_{t \in [0,1]}$ defined by
	\begin{equation*}
		S^t = (1-t)S^0 + tS^1 = Q + (1-t\rho)(S^0 - Q)
	\end{equation*}
	Note that $S^1$ is quadratic at infinity with quadratic form $Q$ on $\{|\xi| \geq B\}$.\\
	
	We will construct an isotopy $\phi^t$ such that for all $t \in [0,1]$, $S^t \circ \phi^t = S^0$. We consider the vector flow $X_t = \frac{d\phi^t}{dt} \circ (\phi^t)^{-1}$ associated to $\phi^t$. We assume that $\phi^t$ preserves the fibres and we adopt the notations $\phi^t(q,\xi) = (q,\eta)$ and $X_t = (0, \sigma_t)$. 
	We would like to get
	\begin{equation}
		\frac{d}{dt} (S^t \circ \phi^t) = 0
	\end{equation}
	Hence, computation yields
	\begin{align*}
		0 &= \frac{dS^t}{dt}(q,\eta) + dS^t.X_t(q,\eta) \\
		&= \rho(\eta) (S^0 -Q)(q,\eta) + \langle  \partial_\eta S^t(q,\eta) , \sigma_t(q,\eta) \rangle
	\end{align*}
	with 
	\begin{align*}
		\partial_\eta S^t(q,\eta) = \partial_\eta Q(q,\eta) - t\rho'(\eta)(S^0-Q)(q,\eta) + (1-t\rho)\partial_\eta(S^0 - Q)
	\end{align*}
	
	We set $\sigma_t = 0$ for $|\eta| \leq A$. For $|\eta| \geq A$, we verify that we can define $\sigma_t$ such that
	\begin{equation} \label{FibredGFQIDem2}
		\langle  \partial_\eta S^t(q,\eta) , \sigma_t(q,\eta) \rangle = -  \rho(\eta) (S^0 -Q)(q,\eta)
	\end{equation}	
		
	Since the quadratic form $Q$ is non-degenerate, we deduce that there exists a constant $C'>0$ such that $|\partial_\eta Q| \geq C'|\eta|$. And using \eqref{FibredGFQIDem1}, we get for $|\eta| \geq A$
	\begin{align*}
		| \partial_\eta S^t(q,\eta) | &\geq |\partial_\eta Q(q,\eta)| - \big| t\rho'(\eta)(S^0-Q)(q,\eta) \big| - \big| (1-t\rho)\partial_\eta(S^0 - Q) \big| \\
		&\geq C'|\eta| - C(\varepsilon+1) \geq C'A - C(\varepsilon+1) \geq 2C'A
	\end{align*}
	for $A$ large enough. Therefore, the identity \eqref{FibredGFQIDem2} allows to define $\sigma_t$ and we have
	\begin{align*}
		|\sigma_t(q,\eta)| \leq \frac{|-  \rho(\eta) (S^0 -Q)(q,\eta) | }{|\partial_\eta S^t(q,\eta)|} \leq \frac{C}{2C'A}
	\end{align*}
	which shows that the vector field $X_t$ is complete. The diffeomorphism $\phi := \phi^1$ and the g.f.q.i $S := S^1$ verify the desired properties of the statement. 
\end{proof}

\begin{rem} 
	The proposition above implies that the level sets of $S_1 \oplus S_2$ and of the g.f.q.i $S$ are homotopic. Consequently we get the homotopy equivalence 
	\begin{equation}
		\big((S_1 \oplus S_2)^\infty, (S_1 \oplus S_2)^{-\infty} \big) \simeq \big((Q_1 \oplus Q_2)^\infty, (Q_1 \oplus Q_2)^{-\infty} \big)
	\end{equation}
	where $Q_1 \oplus Q_2$ is quadratic on fibres with index the sum of the indexes of $S_1$ and $S_2$.
	
	This means that $S_1 \oplus S_2$ can be seen as a f.q.i and with well defined spectral invariants $c(\alpha,S_1 \oplus S_2)$ as in \eqref{SpectralInvariantsDef}. We can that it plays the role of a g.f.q.i for the set $\mathcal{L}_1 \string#  \mathcal{L}_2$.
\end{rem}         

We now focus on the consequences of these two operations on the spectral invariants. We start by a property on the inversion operation.

\begin{prop} \label{propc} 
	Let $1$ and $\mu$ be the respective generators of $H^0(\mathcal{M})$ and $H^d( \mathcal{M})$ and let $S: E = \mathcal{M} \times \mathbb{R}^k \to \mathbb{R}$ be a q.f.i. Then, we have
	\begin{equation}
		c(\mu, -S) = - c(1, S)
	\end{equation}
\end{prop}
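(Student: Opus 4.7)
My plan is to prove Proposition \ref{propc} by invoking the Poincaré–Lefschetz duality between sub-level sets of $S$ and of $-S$, following Viterbo's approach in \cite{MR1157321}.

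The starting observation is that $-S$ is again a f.q.i, with associated pair $(-c, -Q)$ and quadratic form $-Q$ of index $k-m$. Applying the Thom isomorphism used in (\ref{Thom}) to $-S$ gives $H^*((-S)^\infty, (-S)^{-\infty}) \simeq H^{*-(k-m)}(\mathcal{M})$, so $1 \in H^0(\mathcal{M})$ and $\mu \in H^d(\mathcal{M})$ correspond respectively to classes of degrees $m$ and $d+k-m$ in the relative cohomologies of $S$ and $-S$. These degrees sum to $\dim E = d+k$, which is the signature of a Poincaré pairing and the first hint that the two Thom-transported classes are dual to one another.

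The second step is the duality itself. Since $(-S)^a = \{S \geq -a\}$, sub-level sets of $-S$ are super-level sets of $S$, and Poincaré–Lefschetz duality applied to the cobordism $S^{-1}([-N, a])$ (for $N$ large), passing to the limit $N \to \infty$, produces for every regular value $a$ of $S$ an isomorphism
\begin{equation*}
H^j(S^a, S^{-\infty}) \simeq H_{d+k-j}((-S)^\infty, (-S)^{-a}),
\end{equation*}
interchangeable with cohomology by the universal coefficient theorem over $\mathbb{R}$. Combining this with the long exact sequence of the triple $((-S)^\infty, (-S)^{-a}, (-S)^{-\infty})$, the dictionary I seek to extract is: $1 \neq 0$ in $H^*(S^a, S^{-\infty})$ if and only if $\mu = 0$ in $H^*((-S)^{-a}, (-S)^{-\infty})$. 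Intuitively, $1_S$ and $\mu_{-S}$ are Poincaré-dual classes that pair non-trivially against the fundamental class of $E$, and this non-degeneracy forbids simultaneous (non-)vanishing.

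Once the dictionary is in place, the claim follows by reading off the definitions: substituting $b = -a$,
\begin{equation*}
c(\mu, -S) = \inf\{b \; | \; \mu \neq 0 \text{ in } H^*((-S)^b, (-S)^{-\infty})\} = -\sup\{a \; | \; 1 = 0 \text{ in } H^*(S^a, S^{-\infty})\} = -c(1, S).
\end{equation*}
Irregular values of $S$ are handled by the continuity statement of Proposition \ref{Cont} via small perturbations of $S$.

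The main obstacle is establishing the dictionary in the second step: one must carefully transport the Thom classes through Poincaré–Lefschetz duality and confirm that the non-degenerate top-bottom pairing $H^0(\mathcal{M}) \otimes H^d(\mathcal{M}) \to \mathbb{R}$ lifts to a non-degenerate pairing on the relative cohomologies of $E$, so that the connecting map in the long exact sequence detects precisely the obstruction to vanishing of the dual class. Once that compatibility is in hand, the remainder of the argument is formal.
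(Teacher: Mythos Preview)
Your approach is essentially that of the paper. The only cosmetic difference is that the paper invokes Alexander duality (from $(-S)^a = E \setminus S^{-a}$) rather than Poincar\'e--Lefschetz duality on a cobordism, obtaining $H_m(S^{-a}, S^{-\infty}) \simeq H^{d+k-m}((-S)^a, (-S)^{-\infty})$; it then places the long exact sequences of the triples $(S^\infty, S^{-a}, S^{-\infty})$ and $((-S)^\infty, (-S)^a, (-S)^{-\infty})$ side by side in a commutative ladder and diagram-chases both inequalities---this is exactly the ``dictionary'' you flag as the main obstacle, and the paper carries it out explicitly using that the middle groups are one-dimensional, with no perturbation to regular values needed.
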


\begin{proof}
	we first link the sublevels of the g.f.q.i $S$ to those of the g.f.q.i $-S$. Let $a$ be a real number.
	\begin{equation}
		(-S)^a = \{ x \in E \; | \; -S(x) \leq a \} = E \setminus S^{-a}
	\end{equation}
	Thus, by Alexander duality, we have the isomorphism
	\begin{equation}
		AD : H_{m}(S^{-a}, S^{- \infty}) \overset{\sim}{\longrightarrow} H^{d+k-m}((-S)^a , (-S)^{-\infty})
	\end{equation}
	where $m$ is the index of $S$. And writing (vertically) the exact homological and cohomological sequences of the triplets $(S^{\infty}, S^{-a}, S^{-\infty})$ and $((-S)^\infty,(-S)^a,(-S)^{-\infty})$, we get the following commutative diagram
	\begin{equation} \label{diag1}
		\begin{tikzcd}
			H_{m}(S^{-a}, S^{-\infty}) \arrow{d} \arrow[r,"\sim"]  & H^{d+k-m}((-S)^\infty, (-S)^a)\arrow{d} \\
			H_{m}(S^{\infty}, S^{-\infty}) \arrow{d} \arrow[r, "\sim"]  & H^{d+k-m}((-S)^\infty, (-S)^{- \infty}) \arrow{d} \\
			H_{m}(S^{\infty}, S^{-a}) \arrow[r, "\sim"] & H^{d+k-m}((-S)^a, (-S)^{-\infty})
		\end{tikzcd}	
	\end{equation}
	But before proceeding to a diagram chasing, we mention that the isomorphism (\ref{Thom}) and its homological counterpart lead to
	\begin{equation}
		H^{d+k-m}((-S)^\infty, (-S)^{- \infty}) \simeq H^d( \mathcal{M} ) \quad \text{and} \quad H_{m}(S^\infty, S^{- \infty}) \simeq H_0( \mathcal{M} )
	\end{equation}
	Thus, we can see $\mu \in H^d(\mathcal{M})$ and its Poincaré dual $1_0 \in H_0( \mathcal{M} )$ as respective elements of $H^{d+k-m}((-S)^\infty, (-S)^{- \infty})$ and $H_{m}(S^\infty, S^{- \infty})$. And with naturality $AD(1_0) = \mu$.
	
	Suppose that $a < -c(1, S)$. Since $-a > c(1, S)$ and by definition of the spectral invariants, we know that $1 \neq 0$ in $H^{m}(S^{-a}, S^{- \infty})$. Then the morphism $ H^{m}(S^{\infty}, S^{- \infty}) \rightarrow H^{m}(S^{-a}, S^{- \infty}) $ is non null and so is its transpose map
	\begin{equation}
		H_{m}(S^{-a}, S^{- \infty}) \longrightarrow H_{m}(S^{\infty}, S^{- \infty})
	\end{equation}
	And since $\dim H_{m}(S^{\infty}, S^{- \infty}) =1$, the observation above means that $1_0 \in H_{m}(S^{-a}, S^{- \infty})$ and $AD(1_0) \neq 0$ in $H^{d+k-m}((-S)^\infty, (-S)^a)$. By exactness of the vertical lines in the diagram, we get that $\mu =  AD(1_0) = 0$ in $H^{d+k-m}((-S)^a, (-S)^{-\infty})$ and hence $a \leq c(\mu, -S)$. As a result, 
	\begin{equation}\label{ineg1}
		-c(1, S) \leq  c(\mu, -S)
	\end{equation}
	
	Now, suppose that $a < c(\mu, -S)$. We know that $\mu = 0$ in $H^{d+k-m}((-S)^a, (-S)^{-\infty})$, then $\mu$ belongs to
	\begin{multline} \label{exac}
		\ker \big(H^{d+k-m}((-S)^\infty, (-S)^{- \infty}) \to H^{d+k-m}((-S)^a, (-S)^{-\infty}) \big) \\ 
		= \text{Im } \big(  H^{d+k-m}((-S)^\infty, (-S)^a) \to H^{d+k-m}((-S)^\infty, (-S)^{- \infty}) \big)
	\end{multline}
	And since $\mu \neq 0$ in $H^{d+k-m}((-S)^\infty, (-S)^{- \infty})$, the second morphism of (\ref{exac}) is non null and so is its left counterpart in the diagram (\ref{diag1}). Hence, the transpose map
	\begin{equation}
		H^{m}(S^{\infty}, S^{- \infty}) \rightarrow H^{m}(S^{-a}, S^{- \infty})
	\end{equation}
	is non null. And since $\dim H^{m}(S^{\infty}, S^{- \infty}) =1$, its generator $1$ has a non null image in $H^{m}(S^{-a}, S^{- \infty})$. Therefore, $-a \geq c(1, S)$ and $a \leq -c(1, S)$. We obtain 
	\begin{equation} \label{ineg2}
		c(\mu, -S) \leq -c(1, S)
	\end{equation}
	The inequalities (\ref{ineg1}) and (\ref{ineg2}) give the desired property.
\end{proof}

\begin{prop} \label{Invariance}
	Let $\mathcal{L}_1$ and $\mathcal{L}_2$ be two exact Lagrangian submanifolds of $T^* \mathcal{M}$. Let $\phi^t$ be a Hamiltonian flow and let $S_{1,t}$ and $S_{2,t}$ be two (smooth) paths of generating functions respectively for $\phi^t(\mathcal{L}_1)$ and $\phi^t(\mathcal{L}_2)$ such that their associated Liouville primitives $h_{1,t}$ and $h_{2,t}$ do verify the identity \eqref{Primitive}. Then for all $\alpha \in H^*( \mathcal{M} ) $, the map $t \mapsto c \left( \alpha, S_{1,t} \ominus S_{2,t} \right)$ is constant for $S_{1,t} \ominus S_{2,t} := S_{1,t} \oplus (-S_{2,t})$.
\end{prop}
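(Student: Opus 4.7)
The plan is to prove that $\phi : t \mapsto c(\alpha, S_{1,t} \ominus S_{2,t})$ is continuous and that its image lies in a $t$-independent subset of $\mathbb{R}$ of empty interior; connectedness of the time parameter will then force $\phi$ to be constant.

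For continuity, I would use Proposition \ref{Cont}. Along the smooth paths $(S_{i,t})_t$ of g.f.q.i, the fibre dimension is constant by Theorem \ref{Exist}; using the equivalence relations on generating functions (bundle isomorphisms and stabilisations), one can arrange the quadratic form at infinity to also be constant in $t$. Then for $t$ and $s$ in a compact interval, the variation $(S_{1,t} \ominus S_{2,t}) - (S_{1,s} \ominus S_{2,s})$ has compact support contained in a uniform compact set, and smoothness of the paths gives uniform smallness in $\|\cdot\|_\infty$ as $|t-s| \to 0$. Proposition \ref{Cont} then yields continuity of $\phi$.

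Next, I would identify the critical values of $S_{1,t} \ominus S_{2,t}$ with a $t$-independent set. A critical point $(q,\xi_1,\xi_2)$ satisfies $d_{\xi_i} S_{i,t} = 0$, which places $(q, d_q S_{i,t}) \in \mathcal{L}_{i,t} := \phi^t(\mathcal{L}_i)$, together with $d_q S_{1,t} = d_q S_{2,t}$, which equates the two $T^*\mathcal{M}$-projections. The critical set thus corresponds bijectively to the intersection $\mathcal{L}_{1,t} \cap \mathcal{L}_{2,t}$, and the associated critical value at $x \in \mathcal{L}_{1,t} \cap \mathcal{L}_{2,t}$ equals $h_{1,t}(x) - h_{2,t}(x)$. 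Writing $x = \phi^t(y)$ with $y \in \mathcal{L}_1 \cap \mathcal{L}_2$, and applying formula \eqref{Primitive} to both $h_{1,t}$ and $h_{2,t}$ along the common orbit $\tau \mapsto \phi^\tau(y)$, the integral contributions depend only on this orbit and cancel in the difference, yielding
\begin{equation*}
h_{1,t}(x) - h_{2,t}(x) = h_1(y) - h_2(y).
\end{equation*}
Consequently, for every $t$, the set of critical values of $S_{1,t} \ominus S_{2,t}$ coincides with the fixed set $A := \{\, h_1(y) - h_2(y) \ :\ y \in \mathcal{L}_1 \cap \mathcal{L}_2 \,\}$.

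Finally, Proposition \ref{Crit} gives $\phi(t) \in A$ for every $t$, and Sard's theorem applied to the smooth function $S_{1,0} \ominus S_{2,0}$ shows that $A$ has Lebesgue measure zero, hence empty interior in $\mathbb{R}$. The image of a connected time interval under the continuous map $\phi$ is an interval contained in $A$, and any such interval must be a singleton. Therefore $\phi$ is constant. The main obstacle is the continuity step: the fibre dimension and the quadratic form at infinity may vary along a generic smooth path of g.f.q.i, and must first be normalised to fit the hypothesis of Proposition \ref{Cont}; this is handled via the equivalence operations on generating functions.
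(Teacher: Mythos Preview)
Your argument is correct and follows the same overall architecture as the paper's proof: establish continuity of $t\mapsto c(\alpha,S_{1,t}\ominus S_{2,t})$ via Proposition \ref{Cont}, identify the critical values of $S_{1,t}\ominus S_{2,t}$ with the $t$-independent set $A=\{h_{1,0}(y)-h_{2,0}(y):y\in\mathcal{L}_1\cap\mathcal{L}_2\}$ by unwinding the critical point equations and cancelling the action integrals in \eqref{Primitive}, and conclude by connectedness that a continuous function into a set of empty interior must be constant.

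The only substantive difference is how you show $A$ is ``thin''. You appeal to Sard's theorem for $S_{1,0}\ominus S_{2,0}$; the paper instead first treats the case where $S_{1}\ominus S_{2}$ has finitely many critical values (so $A$ is discrete and the conclusion is immediate), and then reduces the general case to this one by approximating $S_{1}\ominus S_{2}$ in $\|\cdot\|_\infty$ by such a function and invoking Proposition \ref{Cont} again. Your route is slicker under a $C^\infty$ hypothesis, but note that the paper only assumes $C^2$ generating functions (Definition \ref{gf}); Sard for a map into $\mathbb{R}$ requires $C^k$ with $k$ at least the dimension of the source, which here is $\dim\mathcal{M}+k_1+k_2$, so in the paper's regularity setting your Sard step is not justified and the approximation argument is the safer choice.
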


\begin{proof}
	From Proposition \ref{Crit}, we know that $c \left( \alpha, S_{1,t} \ominus S_{2,t} \right)$ is a critical value of $S_{1,t} \ominus S_{2,t}$. Then there exists a critical point $(q_t,\xi_t, \eta_t) \in \mathcal{M} \times \mathbb{R}^{k_{S_1}} \times \mathbb{R}^{k_{S_2}}$ of $S_{1,t} \ominus S_{2,t}$ such that
	\begin{equation}
		c \left( \alpha, S_{1,t} \ominus S_{2,t} \right) = S_{1,t}(q_t, \xi_t) - S_{2,t}(q_t,\eta_t)
	\end{equation}
	and 
	\begin{equation}
		d_\xi S_{1,t}(q_t, \xi_t) =0, \quad d_\eta S_{2,t}(q_t,\eta_t), \quad d_q S_{1,t}(q_t, \xi_t) = d_q S_{2,t}(q_t,\eta_t)
	\end{equation}
	Set $x_t=\big(q_t,d_q S_{1,t}(q_t, \xi_t)\big) = \big(q_t,d_q S_{2,t}(q_t,\eta_t)\big) \in \phi^t(\mathcal{L}_1) \cap \phi^t(\mathcal{L}_2) =\phi^t(\mathcal{L}_1 \cap \mathcal{L}_2) $ and $x=(q,p) \in T^*\mathcal{M}$ such that $\gamma(t) := x_t = \phi^t(x)$.  Then using (\ref{Primitive}), we get
	\begin{align*}
		c \left( \alpha, S_{1,t} \ominus S_{2,t} \right) & = h_{1,t}(x_t) - h_{2,t}(x_t) \\
		& = \big[ h_{1,0}(x) + \int_0^t \gamma^* \lambda - H(\tau, \gamma(\tau))\; dt \big] - \big[ h_{2,0}(x) + \int_0^t \gamma^* \lambda - H(\tau, \gamma(\tau))\; dt \big] \\
		& = h_{1,0}(x) - h_{2,0}(x) =  S_{1}(q, \xi) - S_{2}(q,\eta)
	\end{align*}
	where $(q, \xi, \eta)$ is a critical point of $S_{1} \ominus S_{2}$. Hence, $c \left( \alpha, S_{1,t} \ominus S_{2,t} \right)$ is a critical values of  $S_{1} \ominus S_{2}$.
	\par Suppose at first that $S_{1} \ominus S_{2}$ has a finite number of critical values. Proposition \ref{Cont} shows that the map $t \mapsto c \left( \alpha, S_{1,t} \ominus S_{2,t} \right)$ is continuous. And since it takes its value in a discrete set, it must be constant.
	\par For the general case, knowing that the critical values of $S_{1} \ominus S_{2}$ are bounded, we can approximate $S_{1} \ominus S_{2}$ by a g.f.q.i (up to homotopy in the sense of Proposition \ref{FibredGFQI}) $S'_1 \ominus S'_2$ generating $ \mathcal{L}'_1 \string # \overline{\mathcal{L}'_2}$ and verifying the assumption above. We get that the map $t \mapsto c \left( \alpha, S'_{1,t} \ominus S'_{2,t} \right)$ is constant and by Proposition \ref{Cont}, it converges uniformly on compact sets to $t \mapsto c \left( \alpha, S_{1,t} \ominus S_{2,t} \right)$ as $S'_1 \ominus S'_2$ nears $S_{1} \ominus S_{2}$. Therefore, the latter map is also constant.
\end{proof}

\subsection{Graph Selectors} \label{GSsubsection}

We recall in this subsection the standard graph selector principal \cite{MR1094198,MR2025275,unknown}. As in these references, our approach is done in the g.f.q.i framework. However, an interested reader may refer to \cite{MR1484890} and \cite{MR3860396} for a more general approach based on Floer Homology.

\subsubsection{Generalities on Graph Selectors}

\begin{defi}
	Let $\mathcal{L}$ be an exact Lagrangian submanifold of $T^* \mathcal{M}$ generated by a g.f.q.i $S : \mathcal{M} \times \mathbb{R}^k \to \mathbb{R}$. For all $q \in \mathcal{M}$, denote by $\alpha_q$ the generator of $H^*(\{q\})$. The \textit{graph selector} of $\mathcal{L}$ associated to $S$ is the map $u_S : \mathcal{M} \to \mathbb{R}$ defined by
	\begin{equation}
		u_S(q) = c( \alpha_q , S_q) \quad \text{for } S_q := S_{|\{q\} \times \mathbb{R}^k}
	\end{equation}
\end{defi}

\begin{prop} \label{GSprop}
	The graph selector $u_S$ defined as above verify the following properties.
	\begin{enumerate}
		\item $u_S$ is Lipschitz on $\mathcal{M}$.
		\item There exists an open subset $U \subset \mathcal{M}$ of full Lebesgue measure such that $u_S$ is as regular as $S$ on $U$ and for all $q \in U$, 
		\begin{equation}
			(q, d_qu_S) \in \mathcal{L} \quad \text{and} \quad u_S(q) = h(q, d_q u_S) 
		\end{equation}
		with $h = S \circ i_S^{-1}$ the Liouville primitive on $\mathcal{L}$ relative to the g.f.q.i $S$.
	\end{enumerate} 
\end{prop}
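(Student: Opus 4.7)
The plan is to handle the two assertions in turn. For the Lipschitz property, I would fix a coordinate chart on $\mathcal{M}$ and two nearby base points $q, q'$; since $S = c + Q$ outside a compact subset of $E$, the restrictions $S_q, S_{q'} : \mathbb{R}^k \to \mathbb{R}$ are f.q.i.'s sharing the same quadratic form $Q$ at infinity, and their difference is supported in a fixed compact subset of $\mathbb{R}^k$ on which it is bounded by $\mathrm{Lip}(S|_K)\cdot d(q,q')$ for a suitable compact $K$. Proposition \ref{Cont}, applied fibrewise, then yields $|u_S(q) - u_S(q')| \leq \mathrm{Lip}(S|_K)\cdot d(q,q')$, provided $d(q,q')$ is small enough; a standard chaining argument along minimizing paths promotes this local bound to a global Lipschitz estimate on $\mathcal{M}$.

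For the regularity claim, I would define
$$U := \bigl\{ q \in \mathcal{M} \,:\, \exists\, !\; \xi_q \in \mathbb{R}^k \text{ with } d_\xi S(q,\xi_q) = 0,\ S(q,\xi_q) = u_S(q),\ d^2_{\xi\xi}S(q,\xi_q) \text{ invertible} \bigr\}$$
and show that $U$ is open with full Lebesgue measure. Openness would be obtained from the implicit function theorem: at $q_0 \in U$, one produces a smooth local section $q \mapsto \xi(q)$ of $\Sigma_S$ through $\xi_{q_0}$, and sets $\phi(q) := S(q,\xi(q))$; the uniqueness and non-degeneracy of $\xi_{q_0}$, combined with continuity of $u_S$ from Proposition \ref{Cont}, force $u_S \equiv \phi$ on a neighborhood of $q_0$. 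Full measure would be deduced from Sard's theorem applied to the smooth map $\pi_\mathcal{M} \circ i_S^{-1} : \mathcal{L} \to \mathcal{M}$: outside a null set, the fibre is discrete and $\pi_\mathcal{M}$ is a local diffeomorphism on each sheet, which yields non-degeneracy; an additional Sard-type argument on the finitely many branches over such a regular value handles the uniqueness requirement.

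The computation on $U$ is then immediate: differentiating $u_S(q) = S(q,\xi(q))$ and using the critical point equation $d_\xi S(q,\xi(q)) = 0$ gives $d_q u_S(q) = d_q S(q,\xi(q))$. By construction, $i_S(q,\xi(q)) = (q, d_qS(q,\xi(q)))$ lies in $\mathcal{L}$, so $(q, d_qu_S(q)) \in \mathcal{L}$, and
$$u_S(q) = S(q,\xi(q)) = h \circ i_S(q,\xi(q)) = h(q, d_qu_S(q)).$$
The regularity of $u_S$ on $U$ matches that of $S$ by the implicit function theorem.

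The main obstacle I anticipate is the identification $u_S \equiv \phi$ near $q_0 \in U$: one must verify that the branch selected by the cohomological minimax of the spectral invariant coincides with the smooth branch produced by the implicit function theorem. This relies on combining the modulus of continuity from Proposition \ref{Cont} with the local isolation of $u_S(q_0)$ among the critical values of $S_{q_0}$, and it is the only step where the specifically cohomological (rather than merely variational) nature of the selector genuinely enters the argument.
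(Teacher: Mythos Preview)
Your proposal is correct and follows essentially the same route as the paper: Lipschitz continuity via the fibrewise estimate $\|S_q - S_{q'}\|_\infty \le K\,d(q,q')$ fed into Proposition~\ref{Cont}, and the regularity set obtained from Sard's theorem applied to $\pi_S:\Sigma_S\to\mathcal{M}$ together with the observation that distinct branches over a regular value have distinct $q$-differentials (so coincidence of critical values occurs only on a measure-zero set). The only cosmetic difference is that the paper defines $U$ by the stronger conditions ``$q$ is a regular value of $\pi_S$'' and ``all critical values of $S_q$ are pairwise distinct'', whereas you impose the weaker pointwise conditions at the selected $\xi_q$; since your full-measure argument passes through the paper's set anyway, the two constructions coincide in substance.
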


\begin{proof}
	\begin{enumerate}
		\item Let $d$ be a Riemannian metric on $\mathcal{M}$. Since the map $(q,q',\xi) \in \mathcal{M}^2 \times \mathbb{R}^k \mapsto S(q,\xi) - S(q',\xi) \in \mathbb{R}$ is of compact support, it is Lipschitz and there exists a constant $K>0$ such that
		\begin{align*}
			\Vert S_{|\{q\} \times \mathbb{R}^k} - S_{|\{q'\} \times \mathbb{R}^k}\Vert _\infty & \leq K. d(q,q')
		\end{align*}
		Then, one can adapt the proof of Proposition \ref{Cont} and conclude that $u_S$ is $K$-Lipschitz.
		\item Let $U_0$ be the open set of regular values of the $C^1$ projection $\pi_S :\Sigma_S \to \mathcal{M}$. As $\dim \Sigma_S = \dim \mathcal{M}$, Sard's theorem can be applied to the projection map $\pi_S$ and we deduce that $U_0$ is of full measure. Let $q$ be a point of $U_0$. $\Sigma_S$ is transverse to $\{q\}\times \mathbb{R}^k$, then there exists a connected neighbourhood $V_q$ of $q$ in $\mathcal{M}$ such that $\pi_S^{-1}(V_q)$ is the disjoint union of $i$ graphs of $\xi_j : V_q \to \mathbb{R}^k$. Thus, $\mathcal{L}$ is the disjoint union of $i$ graphs of $ y \in V_z \mapsto ( y, d_qS(y, \xi_j(y)) \in T^* \mathcal{M}$ over $V_q$. Observe from the disjoint union that $d_qS(y,\xi_j(y))$ are pairwise distinct, then the sets $\{y \in V_q \; | \; S(y, \xi_{j_1}(y)) = S(y, \xi_{j_2}(y)) \}$ are discrete.
		
		Now set $U \subset U_0$ be the set of $q$ in $U_0$ such that the critical points of $S_q$ have pairwise distinct  critical values. The observation above shows that $U$ is a full measure open set of $\mathcal{M}$. Fix a point $q$ in $U$ and let $V_q$ be its corresponding neighbourhood defined as above. By Proposition \ref{Crit}, for all $y \in V_q$, $u_S(y)$ is one of the critical values of $S_y$ and more precisely one of the pairwise distinct values $S(y,\xi_j(y))$. And by continuity of the maps $y \mapsto u_S(y)$ and $y \mapsto S(y,\xi_j(y))$ and by connectedness of $V_q$, there exists a unique $j_q$ such that for all $y$ in $V_q$, $u_S(y) = S(y,\xi_{j_x}(y))$. Therefore, $u_S$ is as regular as $S$ at $q$ and $du_S(q) = d_qS(q,\xi_{j_q}(q))$. This equality yields $i_S(q,\xi_{j_q}(q))=(q, d_qu_S) \in \mathcal{L}$ and 
		\begin{align*}
			h(q, d_q u_S) = S \circ i_S^{-1} (q, d_qu_S) = S(q,\xi_{j_q}(q)) = u_S(q)
		\end{align*}
	\end{enumerate}
\end{proof}

\begin{prop} \label{GSbar}
	Let $\mathcal{L}$ be an exact Lagrangian submanifold of $T^* \mathcal{M}$ generated by a g.f.q.i $S : \mathcal{M} \times \mathbb{R}^k \to \mathbb{R}$. We have
	\begin{equation}
		u_{-S} = - u_S
	\end{equation}
\end{prop}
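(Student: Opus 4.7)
The statement reduces, essentially by a single application of Proposition~\ref{propc}, to the zero-dimensional case. My plan is as follows.

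First, I would unfold the definition of the graph selector: for every $q \in \mathcal{M}$, by construction,
\begin{equation*}
	u_{-S}(q) = c\bigl(\alpha_q, (-S)_q\bigr) = c\bigl(\alpha_q, -(S_q)\bigr),
\end{equation*}
so the claim $u_{-S} = -u_S$ is equivalent to the pointwise identity $c(\alpha_q, -S_q) = -c(\alpha_q, S_q)$ for each $q$.

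Second, I would view $S_q := S|_{\{q\}\times \mathbb{R}^k}$ as a function quadratic at infinity over the zero-dimensional manifold $\{q\}$. In that setting the cohomology $H^*(\{q\})$ is concentrated in degree zero and generated by the single class $\alpha_q = 1$. In particular, since $d := \dim \{q\} = 0$, the generators $1$ of $H^0(\{q\})$ and $\mu$ of $H^d(\{q\})$ appearing in Proposition~\ref{propc} coincide and are both equal to $\alpha_q$. Applying Proposition~\ref{propc} to the f.q.i $S_q$ on the point $\{q\}$ therefore yields
\begin{equation*}
	c(\alpha_q, -S_q) = c(\mu, -S_q) = -c(1, S_q) = -c(\alpha_q, S_q),
\end{equation*}
which is exactly the identity required above.

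Combining these two observations for every $q \in \mathcal{M}$ gives $u_{-S}(q) = -u_S(q)$ on all of $\mathcal{M}$, finishing the proof. I do not anticipate any real obstacle: the only subtlety is being comfortable with the specialization of Proposition~\ref{propc} to the zero-dimensional base $\{q\}$, where the Poincaré-duality pairing becomes trivial and the two cohomology generators used in its statement are identified.
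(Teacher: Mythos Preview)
Your proposal is correct and matches the paper's own approach: the paper's proof simply states that the identity $c(\alpha_q, -S_q) = -c(\alpha_q, S_q)$ is the fibrewise (i.e., zero-dimensional base) counterpart of Proposition~\ref{propc}. Your observation that for $\mathcal{M} = \{q\}$ the classes $1$, $\mu$, and $\alpha_q$ coincide, so that Proposition~\ref{propc} applies verbatim, is exactly this specialization.
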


\begin{proof}
	The proof of $c( \alpha_q , -S_{|\{q\} \times \mathbb{R}^k}) = -c(\alpha_q, S_{|\{q\} \times \mathbb{R}^k})$ is a simpler counterpart of the proof of Proposition \ref{propc} restricted to the fibres.
\end{proof}

\begin{prop} \label{inegc}
	Let $\mathcal{L}$ be an exact Lagrangian submanifold of $T^* \mathcal{M}$ generated by a g.f.q.i $S : \mathcal{M} \times \mathbb{R}^k \to \mathbb{R}$. Let $1$ and $\mu$ be the respective generators of $H^0( \mathcal{M})$ and $H^d(\mathcal{M})$. Then we have the bounds
	\begin{equation}
		c(1,S) \leq u_S \leq c(\mu, S)
	\end{equation}
\end{prop}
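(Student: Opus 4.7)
The plan is to reduce both inequalities to proving the single lower bound $c(1,S) \leq u_S$, then deduce the upper bound by symmetry. Indeed, the text preceding the statement explains that $-S$ is a g.f.q.i.\ for the inverted Lagrangian $\overline{\mathcal{L}}$; so Proposition \ref{propc} (applied with $-S$ in place of $S$, using $-(-S)=S$) gives $c(\mu, S) = -c(1, -S)$, and Proposition \ref{GSbar} gives $u_{-S} = -u_S$. Granting the lower bound applied to the g.f.q.i $-S$, namely $c(1, -S) \leq u_{-S}$, one multiplies by $-1$ to obtain $u_S \leq c(\mu, S)$. So the real content is the lower bound.

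For the lower bound, I would fix $q \in \mathcal{M}$ and exploit naturality of the isomorphism \eqref{Thom} with respect to the fiber inclusion $\iota_q \colon \{q\} \times \mathbb{R}^k \hookrightarrow \mathcal{M} \times \mathbb{R}^k$. This inclusion restricts to $S_q^a \hookrightarrow S^a$ and $S_q^{-\infty} \hookrightarrow S^{-\infty}$ for every real $a$, producing a commutative square of restriction maps
\begin{equation*}
\begin{tikzcd}
H^*(S^\infty, S^{-\infty}) \arrow[d, "\iota_q^*"] \arrow[r] & H^*(S^a, S^{-\infty}) \arrow[d, "\iota_q^*"] \\
H^*(S_q^\infty, S_q^{-\infty}) \arrow[r] & H^*(S_q^a, S_q^{-\infty}).
\end{tikzcd}
\end{equation*}
Under the Kunneth/Thom identifications $H^*(S^\infty, S^{-\infty}) \simeq H^{*-m}(\mathcal{M})$ and $H^*(S_q^\infty, S_q^{-\infty}) \simeq H^{*-m}(\{q\})$ used in \eqref{Thom}, the left vertical arrow corresponds to the pullback $H^{*-m}(\mathcal{M}) \to H^{*-m}(\{q\})$ by $q \hookrightarrow \mathcal{M}$, which sends the generator $1 \in H^0(\mathcal{M})$ to the generator $\alpha_q \in H^0(\{q\})$. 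Then, using the sup characterization $c(1, S) = \sup \{ a \in \mathbb{R} \mid 1 = 0 \text{ in } H^*(S^a, S^{-\infty}) \}$, take any $a < c(1,S)$: then $1$ vanishes in $H^*(S^a, S^{-\infty})$, and commutativity of the square forces $\alpha_q$ to vanish in $H^*(S_q^a, S_q^{-\infty})$; thus $a \leq u_S(q)$, and letting $a \nearrow c(1,S)$ we conclude $c(1,S) \leq u_S(q)$.

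The only step requiring care is the naturality claim that the Kunneth/Thom isomorphism intertwines base restriction with fiber restriction on the sublevel pairs, but this is formal: both isomorphisms come from cup product with the Thom class of the trivial bundle $\mathcal{M} \times F^- \to \mathcal{M}$, and Thom classes are preserved under restriction to a fiber of the base. Once this is set up, the proof is essentially a one-line diagram chase, and no analytic input (such as critical point analysis as in Proposition \ref{Crit}) is needed beyond what is already encoded in the cohomological definition of the spectral invariants.
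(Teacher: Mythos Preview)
Your proposal is correct and follows essentially the same approach as the paper: both set up the commutative square induced by the fiber inclusion $\{q\}\hookrightarrow\mathcal{M}$ (identifying $1\mapsto\alpha_q$ via the Thom/K\"unneth isomorphism), perform a one-line diagram chase for the lower bound, and then obtain the upper bound by applying the lower bound to $-S$ via Propositions~\ref{propc} and~\ref{GSbar}. The only cosmetic difference is that you argue via the sup characterization ($a<c(1,S)\Rightarrow a\leq u_S(q)$) while the paper uses the contrapositive inf version ($a>u_S(q)\Rightarrow a\geq c(1,S)$).
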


\begin{proof}
	For $q \in \mathcal{M}$, set $S_q := S_{|\{q\} \times \mathbb{R}^k}$. Let $a$ be a real number. From the isomorphism depicted in (\ref{Thom}), we have the two identifications $H^*(\mathcal{M}) \simeq H^*(S^\infty, S^{-\infty})$ and $H^*(\{q\}) \simeq H^*(S_q^\infty, S_q^{-\infty})$. Additionally, the inclusion $i_q :\{q\} \hookrightarrow \mathcal{M}$ yields the morphisms $i_q^* : H^*(S^\infty, S^{-\infty}) \to H^*(S_q^\infty, S_q^{-\infty})$ and $i_{q,a}^* : H^*(S^a, S^{-\infty}) \to H^*(S_q^a, S_q^{-\infty})$ that send $1$ to $\alpha_q$.\\
	Thus, the inclusions $S_q^a \hookrightarrow S^a$ and $S^a \hookrightarrow S^\infty$ complete the following commutative diagram
	\begin{equation}
		\begin{tikzcd}
			H^*(\mathcal{M}) \simeq H^*(S^\infty, S^{-\infty}) \arrow{d} \arrow{r}{i_q^*}  &   H^*(\{q\}) \simeq H^*(S_q^\infty, S_q^{-\infty})  \arrow{d} \\
			H^*(S^a, S^{-\infty})  \arrow{r}{i_{q,a}^*} & H^*(S_q^a, S_q^{-\infty}) 
		\end{tikzcd}	
	\end{equation}		
	A diagram chasing provides the first inequality. If $a > c( \alpha_q , S_{|\{q\} \times \mathbb{R}^k})$, by definition of the spectral invariants $i_{a,q}^*(1) = \alpha_q \neq 0$ in $H^*(S_q^a, S_q^{-\infty})$. Hence, $1 \neq 0$ in $H^*(S^a, S^{-\infty})$ and we get the inequality $u_S(q) = c( \alpha_q , S_q) \geq c(1, S)$.
	
	We prove the second inequality using the Propositions \ref{propc} and \ref{GSbar}. Applying the first part of the proof to $\overline{\mathcal{L}}$, we get
	\begin{align*}
		c( \alpha_q , -S_q) \geq c(1, -S) = -c(\mu , S)
	\end{align*}
	and 
	\begin{align*}
		u_S(q) = - u_{-S}(q) = - c( \alpha_q , -S_q) \leq c(\mu , S)
	\end{align*}
\end{proof}

Recall that the \textit{oscillation} of a scalar map $f : X \to \mathbb{R}$ defined on a compact set $X$ is the quantity
\begin{equation}
	Osc (f) = \max f - \min f
\end{equation}
	
\begin{cor}
	With the notations of Proposition \ref{inegc}, if we consider $h$ the Liouville primitive on $\mathcal{L}$ relative to $S$, then we have
	\begin{equation}
		\min h \leq u_S \leq \max h
	\end{equation}
	and in particular 
	\begin{equation}
		Osc(u_S) \leq Osc(h)
	\end{equation}
\end{cor}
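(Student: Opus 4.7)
The plan is to chain together the two preceding bounds that are already in the excerpt. Proposition \ref{inegc} gives the enveloping inequality $c(1,S)\leq u_S\leq c(\mu,S)$ on $\mathcal{M}$, expressed in terms of the spectral invariants associated to the generators $1\in H^0(\mathcal{M})$ and $\mu\in H^d(\mathcal{M})$. Proposition \ref{inegh} on the other hand controls every spectral invariant $c(\alpha,S)$ by the extrema of the Liouville primitive $h=S\circ i_{S|\mathcal{L}}^{-1}$, namely $\min h\leq c(\alpha,S)\leq \max h$ for all non-zero $\alpha\in H^*(\mathcal{M})$.

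I would simply apply Proposition \ref{inegh} twice, once with $\alpha=1$ and once with $\alpha=\mu$, to obtain
\begin{equation*}
\min h \;\leq\; c(1,S) \;\leq\; \max h \qquad\text{and}\qquad \min h \;\leq\; c(\mu,S) \;\leq\; \max h .
\end{equation*}
Inserting these into Proposition \ref{inegc} yields the chain
\begin{equation*}
\min h \;\leq\; c(1,S) \;\leq\; u_S(q) \;\leq\; c(\mu,S) \;\leq\; \max h
\end{equation*}
valid for every $q\in\mathcal{M}$, which is precisely the first assertion of the corollary.

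For the oscillation inequality, I would take the supremum and infimum of $u_S$ over $\mathcal{M}$ in the above chain. Since $\min h$ and $\max h$ are constants independent of $q$, one gets $\min h\leq \min u_S$ and $\max u_S\leq \max h$, so that
\begin{equation*}
\osc(u_S) \;=\; \max u_S - \min u_S \;\leq\; \max h - \min h \;=\; \osc(h).
\end{equation*}
There is no real obstacle here: the corollary is a purely formal consequence of Propositions \ref{inegh} and \ref{inegc} combined, and the only thing to check is that the two previous propositions indeed apply to the same primitive $h$ relative to the generating function $S$, which is the hypothesis of the statement.
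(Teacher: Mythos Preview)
Your proof is correct and follows exactly the same approach as the paper, which simply states that the corollary is an immediate consequence of Propositions \ref{inegc} and \ref{inegh}. You have merely spelled out the chain of inequalities explicitly.
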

Note that this inequality is valid for all Liouville primitives on $\mathcal{L}$.

\begin{proof}
	This is an immediate consequence of Propositions \ref{inegc} and \ref{inegh}.
\end{proof}

\begin{prop} \label{GSTriangle}
	Let $S_1$ and $S_2$ be two g.f.q.i and consider $S = S_1 \oplus S_2$. Fix a point $q$ in $\mathcal{M}$ and let $\alpha_{1,q}$ and $\alpha_{2,q}$ be the generators of $H^*(S_{1,q}^\infty,S_{1,q}^{-\infty})$ and $H^*(S_{2,q}^\infty,S_{2,q}^{-\infty})$. Then $\alpha_q = \alpha_{1,q} \otimes \alpha_{2,q}$ generates $H^*(S^\infty, S^{-\infty})$ and we have the inequality
	\begin{equation} \label{GSTriangFormula}
		c(\alpha_{1,q},S_{1,q}) + c(\alpha_{2,q},S_{2,q}) = c(\alpha_{1,q} \otimes \alpha_{2,q},S_{1,q} \oplus S_{2,q}) = c(\alpha_q, S_q)
	\end{equation}
\end{prop}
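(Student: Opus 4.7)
The plan is to prove the equality in two directions. For the upper bound $c(\alpha_q, S_q) \leq c(\alpha_{1,q}, S_{1,q}) + c(\alpha_{2,q}, S_{2,q})$, pick $a_i > c(\alpha_{i,q}, S_{i,q})$ for $i=1,2$, so that by definition $\alpha_{i,q}$ is non-zero in $H^*(S_{i,q}^{a_i}, S_{i,q}^{-\infty})$. The Künneth formula for pairs yields the identification
\[
H^*\bigl(S_{1,q}^{a_1} \times S_{2,q}^{a_2},\; (S_{1,q}^{-\infty} \times S_{2,q}^{a_2}) \cup (S_{1,q}^{a_1} \times S_{2,q}^{-\infty})\bigr) \simeq H^*(S_{1,q}^{a_1}, S_{1,q}^{-\infty}) \otimes H^*(S_{2,q}^{a_2}, S_{2,q}^{-\infty}),
\]
under which the cross product $\alpha_{1,q} \otimes \alpha_{2,q}$ is non-zero on the left.

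Choosing the truncation level $-N$ large enough (recall that $S^{-\infty}$ denotes $S^{-N}$ for $N$ sufficiently large), the elementary inclusions $S_{1,q}^{a_1} \times S_{2,q}^{-N} \subset S_q^{-\infty}$, $S_{1,q}^{-N} \times S_{2,q}^{a_2} \subset S_q^{-\infty}$ and $S_{1,q}^{a_1} \times S_{2,q}^{a_2} \subset S_q^{a_1+a_2}$ produce a pair inclusion, inducing in cohomology a restriction
\[
H^*(S_q^{a_1+a_2}, S_q^{-\infty}) \longrightarrow H^*(S_{1,q}^{a_1}, S_{1,q}^{-\infty}) \otimes H^*(S_{2,q}^{a_2}, S_{2,q}^{-\infty}).
\]
By naturality of Künneth, combined with the identification at infinity $\alpha_q \leftrightarrow \alpha_{1,q} \otimes \alpha_{2,q}$ coming from \eqref{Thom}, one checks that $\alpha_q$ restricts to $\alpha_{1,q} \otimes \alpha_{2,q} \neq 0$, so $\alpha_q \neq 0$ in $H^*(S_q^{a_1+a_2}, S_q^{-\infty})$. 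Hence $c(\alpha_q, S_q) \leq a_1 + a_2$, and taking infima over $a_1$, $a_2$ yields the upper bound.

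For the reverse inequality, I apply the just-established upper bound to $-S_{1,q}$ and $-S_{2,q}$, using $-(S_{1,q} \oplus S_{2,q}) = (-S_{1,q}) \oplus (-S_{2,q})$. Proposition \ref{propc} applied to the $0$-dimensional manifold $\{q\}$ (where the generators $1$ and $\mu$ coincide with $\alpha_q$) gives the duality $c(\alpha_q, -S) = -c(\alpha_q, S)$ for any f.q.i $S$ on $\{q\} \times \mathbb{R}^k$; combining with the upper bound applied to $(-S_{1,q}) \oplus (-S_{2,q})$,
\[
-c(\alpha_q, S_q) = c(\alpha_q, -S_q) \leq c(\alpha_q, -S_{1,q}) + c(\alpha_q, -S_{2,q}) = -c(\alpha_{1,q}, S_{1,q}) - c(\alpha_{2,q}, S_{2,q}),
\]
which is exactly the missing lower bound.

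The main obstacle is the naturality verification that under the restriction map from $(S_q^\infty, S_q^{-\infty})$ to the product pair, the abstract generator $\alpha_q$ (defined through the infinity-level identification) actually maps to the cross product $\alpha_{1,q} \otimes \alpha_{2,q}$. This requires a careful diagram chase intertwining the Thom isomorphism of \eqref{Thom}, the Künneth identification at infinity, and the Künneth identification at the finite sublevels; once this compatibility is established, the equality follows formally from the definition of the spectral invariants.
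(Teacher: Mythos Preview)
Your proof is correct. The upper bound argument is essentially identical to the paper's: both use the inclusion $S_{1,q}^{a_1}\times S_{2,q}^{a_2}\subset S_q^{a_1+a_2}$ together with K\"unneth to show that $\alpha_q$ restricts to the non-zero class $\alpha_{1,q}\otimes\alpha_{2,q}$, forcing $c(\alpha_q,S_q)\le a_1+a_2$.

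For the lower bound you take a different route. The paper argues directly: for $c<c(\alpha_{1,q},S_{1,q})+c(\alpha_{2,q},S_{2,q})$ it writes $S_q^c=\bigcup_{t\in\mathbb{R}} S_{1,q}^{c-t}\times S_{2,q}^t$ and observes that on each product slice at least one factor has $\alpha_{i,q}$ vanishing, so $\alpha_q$ vanishes on the whole union. Your approach instead bootstraps the already-proved upper bound via the fibrewise duality $c(\alpha_q,-S)=-c(\alpha_q,S)$ (Proposition~\ref{propc} on the $0$-dimensional base $\{q\}$, equivalently Proposition~\ref{GSbar}), applied to $(-S_{1,q})\oplus(-S_{2,q})=-S_q$. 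This is cleaner: it avoids the slightly informal ``vanishing on an infinite union'' step in the paper's argument, at the cost of invoking the Alexander-duality identity, which is however already available. Both approaches require the same K\"unneth/Thom naturality check you flag in your last paragraph; the paper handles it via the commutative square linking the infinity-level and finite-level K\"unneth identifications.
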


\begin{proof}
	Let $m_1$, $m_2$ and $m=m_1+m_2$ be the respective indices of $S_{1,q}$, $S_{2,q}$ and $S_q$. We have $H^*(S_{1,q}^\infty,S_{1,q}^{-\infty}) \simeq H^*(\mathbb{D}^{m_1}, \partial \mathbb{D}^{m_1})$, $H^*(S_{2,q}^\infty,S_{2,q}^{-\infty}) \simeq H^*(\mathbb{D}^{m_2}, \partial \mathbb{D}^{m_2})$ and $H^*(S_{q}^\infty,S_{q}^{-\infty}) \simeq H^*(\mathbb{D}^{m}, \partial \mathbb{D}^{m})$ which yields the isomorphism
	\begin{equation*}
		\begin{split}
			H^*(S_{q}^\infty,S_{q}^{-\infty}) &\overset{\sim}{\longrightarrow} H^*(S_{1,q}^\infty,S_{1,q}^{-\infty}) \otimes H^*(S_{2,q}^\infty,S_{2,q}^{-\infty}) \\
			\alpha_q &\longmapsto \alpha_{1,q} \otimes \alpha_{2,q}
		\end{split}
	\end{equation*}
	Fix two real numbers $a$ and $b$. Then, we have the inclusions $S_{1,q}^a \times S_{2,q}^b \hookrightarrow S_q^{a+b}$ and for large $N$ and larger $N'>0$, we have inclusion $(S_{1,q}^a \times S_{2,q}^{-N'}) \cup (S_{1,q}^{-N'} \times S_{2,q}^b) \hookrightarrow S_q^{-N}$. These and the K\"unneth formila yield the morphism
	\begin{equation*}
		H^*(S_{q}^{a+b},S_{q}^{-\infty}) \longrightarrow H^*\left( S_{1,q}^a \times S_{2,q}^b, (S_{1,q}^a \times S_{2,q}^{-\infty}) \cup (S_{1,q}^{-\infty} \times S_{2,q}^b) \right) = H^*(S_{1,q}^a,S_{1,q}^{-\infty}) \otimes H^*(S_{2,q}^b,S_{2,q}^{-\infty})  
	\end{equation*}
	Hence, the inclusions $S_{1,q}^a \hookrightarrow S_{1,q}^\infty$, $S_{2,q}^a \hookrightarrow S_{2,q}^\infty$ and $S_{q}^{a+b} \hookrightarrow S_{q}^\infty$ complete the following commutative diagram
	\begin{equation}
		\begin{tikzcd}
			H^*(S_{q}^\infty,S_{q}^{-\infty})  \arrow{d} \arrow[r,"\sim"]  &  H^*(S_{1,q}^\infty,S_{1,q}^{-\infty}) \otimes H^*(S_{2,q}^\infty,S_{2,q}^{-\infty}) \arrow{d} \\
			H^*(S_{q}^{a+b},S_{q}^{-\infty}) \arrow{r} & H^*(S_{1,q}^a,S_{1,q}^{-\infty}) \otimes H^*(S_{2,q}^b,S_{2,q}^{-\infty}) \\
		\end{tikzcd}	
	\end{equation}	
	
	Let $a > c(\alpha_{1,q},S_{1,q})$ and $b > c(\alpha_{2,q},S_{2,q})$. We have $\alpha_{1,q} \neq 0$ and $\alpha_{2,q} \neq 0$ respectively in $H^*(S_{1,q}^\infty,S_{1,q}^{-\infty})$ and $H^*(S_{2,q}^\infty,S_{2,q}^{-\infty})$. Then by the commutativity of the diagram, the lower arrow sends $\alpha_q$ to $\alpha_{1,q} \otimes \alpha_{2,q} \neq 0$ and we deduce that $\alpha_q \neq 0$ in $H^*(S_{q}^{a+b},S_{q}^{-\infty})$. Hence, $a+b \geq  c(\alpha_q, S_q)$ and we deduce the inequality
	\begin{equation*}
		c(\alpha_{1,q},S_{1,q}) + c(\alpha_{2,q},S_{2,q}) \geq c(\alpha_q, S_q)
	\end{equation*}
	
	Now, let $c<c(\alpha_{1,q},S_{1,q}) + c(\alpha_{2,q},S_{2,q})$. If $c = a+b$, then $a < c(\alpha_{1,q},S_{1,q})$ or $b < c(\alpha_{2,q},S_{2,q})$ and in particular $\alpha_{1,q} \otimes \alpha_{2,q}$ vanishes on $S_{1,q}^{a} \times S_{2,q}^{b}$. Hence, $\alpha_q = \alpha_{1,q} \otimes \alpha_{2,q}$ vanishes on $S_q^c = \bigcup_{t\in \mathbb{R}} S_{1,q}^{c-t} \times S_{2,q}^t$ and we deduce that $c \leq c(\alpha_q,S_q)$. We obtain the inverse inequality
	\begin{equation*}
		c(\alpha_{1,q},S_{1,q}) + c(\alpha_{2,q},S_{2,q}) \leq  c(\alpha_q,S_q)
	\end{equation*}
\end{proof}

\begin{prop} \label{Bound}
	Let $\mathcal{L}_1$ and $\mathcal{L}_2$ be two Lagrangian submanifolds of $T^*\mathcal{M}$ respectively generated by the g.f.q.i $S_1$ and $S_2$. Let $u_{S_1}$ and $u_{S_2}$ be the corresponding graphs selectors. And let $1$ and $\mu$ be the respective generators of $H^0(\mathcal{M})$ and $H^d(\mathcal{M})$. Then we have the following bounds
	\begin{equation}
		c( 1, S_1 \ominus S_2 ) \leq u_{S_1} - u_{S_2} \leq c( \mu, S_1 \ominus S_2 )
	\end{equation}
\end{prop}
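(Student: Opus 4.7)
The plan is to reduce the inequalities to an application of Proposition \ref{inegc} applied to the g.f.q.i $S_1 \ominus S_2$, after first identifying the graph selector $u_{S_1 \ominus S_2}$ with the difference $u_{S_1} - u_{S_2}$.

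First I would fix a point $q \in \mathcal{M}$ and apply Proposition \ref{GSTriangle} to the pair $S_1$ and $-S_2$. Writing $(S_1 \ominus S_2)_q = S_{1,q} \oplus (-S_{2,q})$ and denoting by $\alpha_{1,q}$, $\alpha_{2,q}$ the generators of $H^*(S_{1,q}^\infty,S_{1,q}^{-\infty})$ and $H^*((-S_{2,q})^\infty,(-S_{2,q})^{-\infty})$, the proposition gives
\begin{equation*}
    c\bigl(\alpha_q,(S_1 \ominus S_2)_q\bigr) = c(\alpha_{1,q}, S_{1,q}) + c(\alpha_{2,q}, -S_{2,q}).
\end{equation*}
Combining this with Proposition \ref{GSbar} (or rather its pointwise/fibred analogue, which is exactly the statement that $c(\alpha_{2,q},-S_{2,q}) = -c(\alpha_{2,q},S_{2,q})$, proven as in Proposition \ref{propc} restricted to a single fibre) yields
\begin{equation*}
    u_{S_1 \ominus S_2}(q) = u_{S_1}(q) - u_{S_2}(q).
\end{equation*}

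Next, I would invoke Proposition \ref{inegc} with the g.f.q.i $S_1 \ominus S_2$. Strictly speaking, $S_1 \oplus (-S_2)$ is not literally quadratic at infinity, but Proposition \ref{FibredGFQI} and the remark following it show that up to a fibre-preserving diffeomorphism $\phi$, this function is equivalent to a genuine g.f.q.i $S$ generating $\mathcal{L}_1 \string# \overline{\mathcal{L}_2}$; the relative cohomology of sublevels is unchanged, and spectral invariants as in \eqref{SpectralInvariantsDef} are well defined and agree. Applying Proposition \ref{inegc} therefore gives
\begin{equation*}
    c(1, S_1 \ominus S_2) \leq u_{S_1 \ominus S_2}(q) \leq c(\mu, S_1 \ominus S_2).
\end{equation*}
Substituting the identification $u_{S_1 \ominus S_2} = u_{S_1} - u_{S_2}$ established in the first step produces the claimed bounds.

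The proof is essentially a direct composition of three previous results: the fibred additivity of spectral invariants (Proposition \ref{GSTriangle}), the sign flip (Proposition \ref{GSbar}/\ref{propc}), and the global bounds for graph selectors (Proposition \ref{inegc}). The only point requiring care is the remark that the fibred sum is a g.f.q.i only after the diffeomorphism of Proposition \ref{FibredGFQI}, so one must check that both sides of the identity transform compatibly under $\phi$; since $\phi$ is fibre-preserving, this is immediate and does not affect the fibrewise spectral invariant that defines $u_{S_1 \ominus S_2}$.
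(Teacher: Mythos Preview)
Your proof is correct and follows exactly the same approach as the paper: apply Proposition~\ref{GSTriangle} and Proposition~\ref{GSbar} to identify $u_{S_1\ominus S_2}(q)=u_{S_1}(q)-u_{S_2}(q)$, then invoke Proposition~\ref{inegc}. Your additional remark about Proposition~\ref{FibredGFQI} making sense of $S_1\ominus S_2$ as a g.f.q.i.\ is a welcome clarification that the paper leaves implicit.
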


\begin{proof} 
	For $S := S_1 \ominus S_2$, an application of Propositions \ref{GSbar} and \ref{GSTriangle} yields
	\begin{align*}
		u_{S_1}(q) - u_{S_2}(q) = u_{S_1}(q) + u_{-S_2}(q) = c(\alpha_q, S_q)
	\end{align*}
	and we deduce from Proposition \ref{inegc} that
	\begin{align*}
		c(1, S) \leq u_{S_1}(q) - u_{S_2}(q) \leq c(\mu, S )
	\end{align*}
\end{proof}

\subsubsection{Construction of the Graph Selector of $\mathscr{L}$} \label{GSConstruction}

Let us summarize the constructions till this point of the article. We constructed $\mathpzc{h}$ a Liouville primitive on the extended non compact exact Lagrangian submanifold $\mathscr{L}$ defined in Section \ref{Extsection}. We had a path $(h_t)_{t \in \mathbb{R}}$ of Liouville primitives on $\mathcal{L}_t$ with the property that for all $t \in \mathbb{R}$ and $(q,p) \in \mathcal{L}_t$,
\begin{equation*}
	\mathpzc{h}(t,E,q,p) = h_t(q,p)
\end{equation*}
where $E$ is such that $(t,E,q,p) \in \mathscr{L}$. And for all times $T>0$, we set the paths $(S^T_t : M \times \mathbb{R}^{k_T} \to \mathbb{R})_{t \in [-T,T]}$ of g.f.q.i such that $S^T_t$ generates $\mathcal{L}_t= \phi_H^t( \mathcal{L})$ and
\begin{equation*}
	h_t = S^T_t \circ i_{S|\mathcal{L}_t}^{-1}
\end{equation*}

Now, for all $T>0$ we define the graph selector of $\mathcal{L}_t$ as
\begin{equation}
	u^T(t,q) = c(\alpha_q, S^T_{t|\{q\} \times \mathbb{R}^k})
\end{equation}

\begin{prop}
	For all $t \in [-T,T]$, $u^T_t(q) := u^T(t,\cdot)$ is independent of $T$.
\end{prop}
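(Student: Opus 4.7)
The strategy is to invoke Viterbo's uniqueness theorem (Theorem \ref{Uniq}) to relate any two $S^T_t$ and $S^{T'}_t$, track how the fiber-wise spectral invariant transforms under basic operations, and then exploit the normalization (\ref{ntransl}) to identify the translation constants. Fix $t \in \mathbb{R}$ and $T, T' > |t|$. Both $S^T_t$ and $S^{T'}_t$ are g.f.q.i generating $\mathcal{L}_t$, so by Theorem \ref{Uniq} they reduce to a common g.f.q.i $S''$ via a sequence of basic operations (translation, bundle isomorphism, stabilization). Let $c_T$ and $c_{T'}$ denote the total translation constants in the chains $S^T_t \to S''$ and $S^{T'}_t \to S''$ respectively.

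The next step is to analyze how each basic operation affects the fiber-wise spectral invariant $c(\alpha_q, S_q)$. Translation by a constant $c$ shifts $c(\alpha_q, S_q)$ by $c$. A bundle isomorphism over $\mathcal{M}$ restricts on the fiber over $q$ to a diffeomorphism of $\mathbb{R}^k$, and since the sublevel-set cohomology appearing in (\ref{SpectralInvariantsDef}) depends only on the homotopy types of those sublevel sets, the spectral invariant is unchanged. Stabilization by a fiber-wise non-degenerate quadratic form $Q'$ produces $S_q \oplus Q'_q$ on the fiber; by the Thom isomorphism argument used in (\ref{Thom}), the sublevel-set cohomology of $S_q \oplus Q'_q$ is naturally isomorphic to that of $S_q$ (with a degree shift equal to the index of $Q'_q$), and this identification sends the relevant generator $\alpha_q$ to $\alpha_q$, preserving the invariant. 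Putting these together,
\begin{equation*}
u^T(t,q) + c_T \;=\; c(\alpha_q, S''_q) \;=\; u^{T'}(t,q) + c_{T'}.
\end{equation*}

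It remains to show $c_T = c_{T'}$, and this is where the normalization (\ref{ntransl}) enters. Mimicking the computation in the proof of Proposition \ref{inegh}, every critical value of $S^T_{t,q}$ is of the form $S^T_t(q,\xi) = h_t(q, d_q S^T_t(q, \xi))$ at a critical point $(q,\xi) \in \Sigma_{S^T_t}$, so the set of critical values of $S^T_{t,q}$ equals $A_q := \{h_t(q,p) \,:\, (q,p) \in \mathcal{L}_t \cap \pi_M^{-1}(q)\}$; the identical computation applies to $S^{T'}_{t,q}$, and because $h_t$ does not depend on $T$, the set $A_q$ is the same on both sides. Since basic operations transform the set of critical values purely by the translation part, one gets $A_q + c_T = A_q + c_{T'}$, so $A_q$ is invariant under translation by $c_T - c_{T'}$. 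For any $q$ where $\mathcal{L}_t$ is transverse to the fiber $\pi_M^{-1}(q)$---an open dense subset of $M$---the set $A_q$ is finite, and it is nonempty since by Proposition \ref{Crit} the spectral invariant itself lies in $A_q$; the only self-translation of a finite nonempty subset of $\mathbb{R}$ is zero, forcing $c_T = c_{T'}$. Hence $u^T(t,q) = u^{T'}(t,q)$ on a dense set, and then everywhere by the Lipschitz continuity of graph selectors (Proposition \ref{GSprop}). The main delicate step is the stabilization-invariance of the fiber-wise spectral invariant, for which one has to check that the Thom isomorphism is compatible with the sublevel-set filtration and identifies $\alpha_q$ on both sides; this is a standard but careful diagram-chasing exercise.
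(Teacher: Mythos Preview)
Your proof is correct and follows the same overall strategy as the paper: invoke Viterbo's uniqueness theorem and check that the fiber-wise spectral invariant is preserved under bundle isomorphism and stabilization. The paper handles stabilization by directly applying Proposition \ref{GSTriangle} (giving $c(\alpha_q,(S\oplus Q)_q)=c(\alpha_q,S_q)+c(\alpha_q^Q,Q_q)=c(\alpha_q,S_q)$), which packages your Thom-isomorphism argument.

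The one place where you work harder than necessary is the treatment of the translation constant. The paper observes that the normalization (\ref{ntransl}) pins down the Liouville primitive: since bundle isomorphism and stabilization leave $S\circ i_S^{-1}$ unchanged while translation by $c$ shifts it by $c$, any chain of basic operations taking $S^T_t$ to $S''$ changes the Liouville primitive from $h_t$ to $h_t+c_T$, and likewise $S^{T'}_t\to S''$ gives $h_t+c_{T'}$; equating these yields $c_T=c_{T'}$ immediately. Your route via finite critical-value sets $A_q$ at transverse fibers, the translation-rigidity of finite sets, and a density/continuity argument reaches the same conclusion but is a detour---the Liouville primitive already encodes all critical values and is defined globally, so there is no need to restrict to generic $q$ and then pass to the closure.
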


\begin{proof}
	Fix $0<T<T'$ and $t \in [-T,T]$. The g.f.q.i $S^T_t$ and $S^{T'}_t$ generate $\mathcal{L}_t$, then by the uniqueness Theorem \ref{Uniq} they are equivalent. These two generating functions have been constructed according to (\ref{ntransl}) that is $S^T_t \circ i_{S^T_t}^{-1} = S^{T'}_t \circ i_{S^{T'}_t}^{-1}$. This means that the translation never occurs in the successive operations intervening in the equivalence between $S^T_t$ and $S^{T'}_t$. Thus, we only need to take care of isomorphism and stabilization operations.
	
	We first deal with the isomorphism operation. Suppose that there exists a bundle isomorphism $\psi : M \times \mathbb{R}^{k_{T'}} \to M \times \mathbb{R}^{k_{T}}$ such that $S^{T'}_t = S^T_t \circ \psi$. Thus, for all $a \in \mathbb{R}$ and $q \in M$, $(S^{T'}_t)_q^a \simeq (S^T_t)_q^a$ and consequently $c(\alpha_q , (S^{T'}_t)_q) = c(\alpha_q , (S^{T}_t)_q)$ i.e $u^{T'}_t(q) = u^T_t(q)$.
	
	We deal now with the stabilization operation. Suppose that $k_{T'} = k_{T} + k$ and that there exists a non-degenerate quadratic form $Q : M \times \mathbb{R}^k \to \mathbb{R}$ such that $S^{T'}_t = S^T_t \oplus Q$. The quadratic form $Q_q=Q_{|{q}\times \mathbb{R}^k}$ is non-degenerate so that its only critical value in the fibre of $q$ is $c(\alpha^Q_q, Q_q) = 0$. Hence, we deduce from Proposition \ref{GSTriangle} that $u^{T'}_t(q) = u^T_t(q) + c(\alpha^Q_q, Q_q) = u^T_t(q)$.
\end{proof}

This independence enables us to define the map $u=u_\mathscr{L} : \mathcal{M} = \mathbb{R}\times M \to \mathbb{R}$ given by
\begin{equation}
	u_\mathscr{L}(t,q)= u^T_t(q) \quad \text{for some } T> |t|
\end{equation} 

\begin{prop} \label{Graphext}
	$u_\mathscr{L}$ is a graph selector for the exact Lagrangian submanifold $\mathscr{L}$ in the sense that it verifies the properties of Proposition \ref{GSprop}. More precisely
	\begin{enumerate}
		\item $u_\mathscr{L}$ is locally Lipschitz.
		\item There exists an open subset $\mathcal{U} \subset \mathcal{M}$ of full Lebesgue measure such that $u_\mathscr{L}$ is as regular as $\mathpzc{h}$ on $\mathcal{U}$ and for all $(t,q) \in \mathcal{U}$,
		\begin{equation}
			\big(t,\partial_tu(t,q),q, d_qu(t,q)\big) \in \mathscr{L} \quad \text{and} \quad u(t,q) = \mathpzc{h}\big(t,\partial_tu(t,q),q, d_qu(t,q)\big)
		\end{equation}
		with $\mathpzc{h}$ the Liouville primitive on $\mathscr{L}$ defined in Subsection \ref{LPrim}.
	\end{enumerate}
\end{prop}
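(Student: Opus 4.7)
The plan is to identify $u_\mathscr{L}$ with the graph selector of the extended Lagrangian $\mathscr{L}_{(-T,T)}$ associated to the g.f.q.i $S^T$ viewed on the full base $(-T,T) \times M$, and then to invoke Proposition \ref{GSprop} essentially verbatim, with minor modifications to handle the non-compact time direction. By Proposition \ref{GFext}, for each $T>0$ the function $S^T : (-T,T) \times M \times \mathbb{R}^{k_T} \to \mathbb{R}$ generates $\mathscr{L}_{(-T,T)}$, quadraticity at infinity being in the fiber variable $\xi$; moreover, by the normalization (\ref{ntransl}), the associated Liouville primitive $S^T \circ i_{S^T}^{-1}$ on $\mathscr{L}_{(-T,T)}$ coincides with the restriction of $\mathpzc{h}$. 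Unwinding the definition of a graph selector from Subsection \ref{GSsubsection} applied to $S^T$ on the extended base $\mathcal{M}$ yields
\[
c\bigl(\alpha_{(t,q)}, S^T_{|\{(t,q)\} \times \mathbb{R}^{k_T}}\bigr) = c\bigl(\alpha_q, S^T_{t,|\{q\} \times \mathbb{R}^{k_T}}\bigr) = u^T(t,q),
\]
so $u_\mathscr{L}$ agrees on $(-T,T) \times M$ with the graph selector of $\mathscr{L}_{(-T,T)}$ associated to $S^T$.

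Next I run the proof of Proposition \ref{GSprop} slab-by-slab in time. For the Lipschitz statement I fix a compact time slab $[-T+\varepsilon, T-\varepsilon] \times M$; since $S^T$ is quadratic at infinity in $\xi$ with a fiber-independent quadratic form, the map $((t,q), (t',q'), \xi) \mapsto S^T(t,q,\xi) - S^T(t',q',\xi)$ has compact support in $\xi$ on this slab, and the argument of Proposition \ref{Cont} delivers a uniform Lipschitz constant for $u_\mathscr{L}$ there, hence local Lipschitzness on $\mathcal{M}$. For the regularity claim, Sard's theorem applied to the projection $\pi_{S^T} : \Sigma_{S^T} \to (-T,T) \times M$, combined with the ``pairwise distinct critical values'' refinement of the proof of Proposition \ref{GSprop}(2), produces an open subset $\mathcal{U}_T \subset (-T,T) \times M$ of full Lebesgue measure on which $u_\mathscr{L}$ inherits the regularity of $S^T$. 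Setting $\mathcal{U} := \bigcup_{T>0} \mathcal{U}_T$ gives the desired full-measure open subset of $\mathcal{M}$, the independence in $T$ already proved ensuring that the local choices patch consistently.

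Finally, on $\mathcal{U}$, Proposition \ref{GSprop}(2) provides a smooth critical branch $(t,q) \mapsto \xi(t,q)$ of $\Sigma_{S^T}$ along which $d_\xi S^T(t,q,\xi(t,q)) = 0$ and $u_\mathscr{L}(t,q) = S^T(t,q,\xi(t,q))$. Differentiating gives $du_\mathscr{L}(t,q) = \bigl(\partial_t S^T, d_q S^T\bigr)(t,q,\xi(t,q))$, and Proposition \ref{GFext} immediately identifies the point $(t, \partial_t u, q, d_q u)$ with the image of $(t,q,\xi(t,q))$ under $i_{S^T}$, hence as a point of $\mathscr{L}$; then $u_\mathscr{L}(t,q) = S^T \circ i_{S^T}^{-1}(t, \partial_t u, q, d_q u) = \mathpzc{h}(t, \partial_t u, q, d_q u)$. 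The only genuine obstacle is bookkeeping: one must verify that the sublevel-set and Thom-isomorphism arguments of Subsection \ref{SIsubsection} remain valid on the non-compact base $\mathbb{R} \times M$. This is handled by the fact that quadraticity at infinity is required only in the fiber direction, so all homotopical computations localize to compact time slabs, after which the entire theory of Subsections \ref{SIsubsection}--\ref{GSsubsection} transfers without essential change.
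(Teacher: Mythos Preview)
Your proposal is correct and follows essentially the same approach as the paper: both identify $u_\mathscr{L}$ on $(-T,T)\times M$ with the graph selector of $\mathscr{L}_{(-T,T)}$ via the equality $c(\alpha_{(t,q)}, S^T_{|\{(t,q)\}\times\mathbb{R}^{k_T}}) = c(\alpha_q, S^T_{t|\{q\}\times\mathbb{R}^{k_T}})$, run the Lipschitz argument of Proposition~\ref{Cont} on compact time slabs, apply the Sard/branch argument of Proposition~\ref{GSprop}(2) to $S^T$ on $(-T,T)\times M$ to obtain $\mathcal{U}_T$, differentiate and invoke Proposition~\ref{GFext}, and finally take $\mathcal{U} = \bigcup_T \mathcal{U}_T$. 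The paper's proof is slightly terser but the logical structure is identical.
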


\begin{proof}
	\begin{enumerate}
		\item Let $d$ be a Riemannian metric on $\mathcal{M} = \mathbb{R} \times M$. Let $(t,q)$ be a point of $\mathcal{M}$ and $T>|t|$ be a fixed time. Since the map $(t,q,t',q',\xi) \in \big([-T,T]\times M\big)^2 \times \mathbb{R}^k \mapsto S(t,q,\xi) - S(t',q',\xi) \in \mathbb{R}$ is of compact support, it is Lipschitz and there exists a constant $K_T>0$ such that
		\begin{align*}
			\Vert S_{|\{(t,q)\} \times \mathbb{R}^k} - S_{|\{(t',q')\} \times \mathbb{R}^k}\Vert _\infty & \leq K_T. d\big((t,q),(t',q')\big)
		\end{align*}
		Then, one can apply Proposition \ref{Cont} and conclude that $u_\mathscr{L}$ is $K_T$-Lipschitz on $[-T,T]\times M$. This being true for all $T>0$, we conclude that $u_\mathscr{L}$ is locally Lipschitz on $\mathcal{M}$.		
		\item Observe that for $T>0$ and $(t,q) \in (-T,T) \times M$, we have the equality $c(\alpha_{q}, S^T_{t|\{q\} \times \mathbb{R}^k}) = c(\alpha_{(t,q)}, S^T_{|\{(t,q)\} \times \mathbb{R}^k})$. Then, the proof of the second statement of Proposition \ref{GSprop} applied to $S^T$ instead of $S^T_t$ gives an open set $\mathcal{U}^T \subset (-T,T) \times M$ of full measure such that every element $(t,q)$ of $\mathcal{U}^T$ has a neighbourhood $\mathcal{V}_{(t,q)}$ and a regular map $\xi : \mathcal{V}_{(t,q)} \to \mathbb{R}^k$ such that for all $(s,y) \in \mathcal{V}_{(t,q)}$, $u_\mathscr{L}(s,y) = S^T\big(s,y,\xi(s,y)\big)$ which is as regular as $\mathpzc{h}$. When differentiated, this gives
	\begin{equation}
		\partial_t u_\mathscr{L}(s,y) = \partial_t S^T\big(s,y,\xi(s,y)\big) \quad \text{ and } \quad d_q u_\mathscr{L}(s,y) = d_q S^T\big(s,y,\xi(s,y)\big)
	\end{equation}
	Therefore, Proposition \ref{GFext} imply that $\big(t,\partial_tu(t,q),q, d_qu(t,q)\big)$ belongs to $\mathscr{L}$ meaning that
	\begin{align*}
		u(t,q) = u^T_t(q) = h_t(q,d_q u^T(q)) = \mathpzc{h}\big(t,\partial_tu(t,q),q, d_q u(t,q)\big)
	\end{align*}
	Finally, the set $\mathcal{U} = \bigcup_{T \in \mathbb{N}^*} \mathcal{U}^T$ of $\mathcal{M}$ is open of full measure and meets the required properties.
	\end{enumerate}
\end{proof}

\section{The Birkhoff Theorem for the Lagrangian Submanifold $\mathscr{L}$} \label{Proofsection}

The current section is devoted to the proof of Theorem \ref{MainTheorem}.  We start in Subsection \ref{Calibsection} by presenting some concepts that emerged in Fathi's weak-KAM theory that will show crucial in showing the main result. See \cite{fathi2008weak} \cite{MR1720372} for elaborate expositions on the subject in the autonomous framework. The remainder of the section is full focused on the proof.

\subsection{Calibration} \label{Calibsection} 

The Hamiltonian $H : \mathbb{T}^1 \times T^*M \to \mathbb{R}$ is assumed to be Tonelli. This enables to define its convex conjugate named the \textit{Lagrangian} $L: \mathbb{T}^1 \times TM \to \mathbb{R}$ given by the following formula
\begin{equation} \label{Lag}
	L(t,q,v) = \max_{p \in T^*_qM} \{ p(v) - H(t,q,p) \} 
\end{equation}

We introduce a tool derived from convex analysis that will help in the upcoming proofs.
\begin{lem}(Fenchel's inequality)
	For all $q$ in $M$ and all $(v,p) \in T_qM \times T^*_qM$
	\begin{equation} \label{Fenchel}
		p(v) \leq H(t,q,p) + L(t,q,v)
	\end{equation}
	with equality if and only if $p = \partial_v L (q,v)$ if and only if $v= \partial_p H(t,q,p)$.
\end{lem}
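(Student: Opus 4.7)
The plan is straightforward since Fenchel's inequality is a classical fact of convex analysis that follows almost directly from the definition of $L$ as a supremum. First, I would observe that the inequality $p(v) \leq H(t,q,p) + L(t,q,v)$ is an immediate consequence of (\ref{Lag}): for any $p \in T^*_qM$ we have
\begin{equation*}
L(t,q,v) = \sup_{p' \in T^*_qM} \{ p'(v) - H(t,q,p') \} \geq p(v) - H(t,q,p),
\end{equation*}
which rearranges to the desired inequality.

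For the characterization of the equality case, the key ingredients are the Tonelli hypotheses on $H$. By strict convexity of $H$ in the fibre, the map $p' \mapsto p'(v) - H(t,q,p')$ is strictly concave in $p'$, and by superlinearity of $H$ it tends to $-\infty$ as $\Vert p' \Vert \to \infty$. These two properties together guarantee that the supremum defining $L(t,q,v)$ is attained at a unique point $p^\ast$, which is characterized by the first-order condition $v = \partial_p H(t,q,p^\ast)$. Consequently the Fenchel inequality becomes an equality at $(p,v)$ if and only if $p$ is this unique maximizer, i.e.\ if and only if $v = \partial_p H(t,q,p)$.

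To obtain the remaining equivalence with $p = \partial_v L(t,q,v)$, I would invoke the involutivity of the Legendre transform in the Tonelli framework. A standard verification (see, e.g., \cite{fathi2008weak}) shows that $L$ inherits smoothness, strict convexity, and superlinearity in $v$ from the corresponding properties of $H$, and that taking the convex conjugate of $L$ in $v$ recovers $H$:
\begin{equation*}
H(t,q,p) = \sup_{v \in T_qM} \{ p(v) - L(t,q,v) \}.
\end{equation*}
The symmetric argument then yields the equivalence between equality in Fenchel and $p = \partial_v L(t,q,v)$.

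The only delicate point is the verification of this involutivity, equivalently the assertion that the Legendre transform sends Tonelli Hamiltonians to Tonelli Lagrangians in a reversible way. This is the essential content of the proof but is entirely classical and requires no ingredient specific to the paper's framework; accordingly I do not expect any substantial obstacle, and the main task reduces to writing down the two symmetric first-order arguments cleanly.
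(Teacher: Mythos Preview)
Your proposal is correct and is the standard argument. Note, however, that the paper does not actually prove this lemma: it is stated as a classical tool from convex analysis, with no accompanying proof, so there is nothing to compare against beyond observing that your argument is the expected one.
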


\begin{rem}
	Note that for $x=(q,p)$, if $\phi^t_H(x)=(q(t),p(t)) $ is a curve that follows the Hamiltonian flow, then the Hamiltonian equations results in the equalities
	\begin{equation} \label{LegendreMapsEL}
		\dot{q}(t) = \partial_p H\big(t,q(t),p(t)\big) \quad \text{and} \quad p(t) = \partial_v L\big(t,q(t),\dot{q}(t) \big)
	\end{equation}
\end{rem}

Another fundamental concept that has emerged from weak-KAM theory is the following.

\begin{defi}
	Let $\gamma : [a,b] \to M$ be a $C^1$ curve on $M$. The \textit{defect of calibration} of $\gamma$ is defined as
	\begin{equation}
		\delta(u,\gamma) = \int_a^b L(s,\gamma(s),\dot{\gamma}(s)) \; ds - [u(b,\gamma(b)) - u(a,\gamma(a))]
	\end{equation}
\end{defi}

Recall that $u$ is the graph selector of the extended Lagrangian submanifold $\mathscr{L}$ constructed in Subsection \ref{GSConstruction}.

\begin{prop} \label{Domination}
	The defect of calibration $\delta$ is always non-negative. We say that the map $u : \mathbb{R} \times M \to \mathbb{R}$ is \textit{dominated} by $L$.
\end{prop}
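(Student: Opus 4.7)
The plan is to derive the inequality $u(b,\gamma(b)) - u(a,\gamma(a)) \leq \int_a^b L(s,\gamma(s),\dot\gamma(s))\,ds$ by combining three ingredients: the Hamilton--Jacobi-type identity satisfied by $u$ on the regularity set $\mathcal{U}$ from Proposition~\ref{Graphext}, the Fenchel inequality \eqref{Fenchel}, and a chain-rule argument made rigorous by a Fubini-type perturbation.

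\textbf{Core computation on $\mathcal{U}$.} Since $\mathscr{L}\subset\{\mathscr{H}=0\}$ by \eqref{zero}, and since $(t,\partial_t u,q,d_q u)\in\mathscr{L}$ on $\mathcal{U}$ by Proposition~\ref{Graphext}, we have on $\mathcal{U}$ the pointwise identity $\partial_t u(t,q) + H(t,q,d_q u(t,q)) = 0$. Applying Fenchel's inequality \eqref{Fenchel} with $p = d_q u(s,\gamma(s))$ and $v = \dot\gamma(s)$ gives
\begin{equation*}
    d_q u(s,\gamma(s))\cdot\dot\gamma(s) \leq H(s,\gamma(s),d_q u(s,\gamma(s))) + L(s,\gamma(s),\dot\gamma(s)).
\end{equation*}
Substituting the Hamilton--Jacobi identity yields, at every point $(s,\gamma(s))\in\mathcal{U}$,
\begin{equation*}
    \partial_t u(s,\gamma(s)) + d_q u(s,\gamma(s))\cdot\dot\gamma(s) \leq L(s,\gamma(s),\dot\gamma(s)),
\end{equation*}
whose left-hand side is exactly $\tfrac{d}{ds}[u(s,\gamma(s))]$ at points of differentiability.

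\textbf{Handling points outside $\mathcal{U}$ via perturbation.} The main obstacle is that the lifted curve $\tilde\gamma(s)=(s,\gamma(s))$ may fail to stay in $\mathcal{U}$ on a set of positive measure, so the chain rule above cannot be invoked directly. To get around this, I would subdivide $[a,b]$ into finitely many intervals $[a_i,a_{i+1}]$ such that $\gamma([a_i,a_{i+1}])$ lies in a single coordinate chart of $M$, and work in that chart. For small $v\in\mathbb{R}^d$, consider the translated curves $\gamma_v(s) = \gamma(s)+v$ with $\dot\gamma_v=\dot\gamma$. Because $\mathcal{M}\setminus\mathcal{U}$ has Lebesgue measure zero and the map $(s,v)\mapsto(s,\gamma(s)+v)$ is a local diffeomorphism, Fubini's theorem ensures that for almost every sufficiently small $v$, the set $\{s\in[a_i,a_{i+1}] : (s,\gamma_v(s))\notin\mathcal{U}\}$ has measure zero.

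\textbf{Conclusion.} For such admissible $v$, local Lipschitzness of $u$ (Proposition~\ref{Graphext}) makes $s\mapsto u(s,\gamma_v(s))$ Lipschitz and hence absolutely continuous, so the fundamental theorem of calculus applies. Combining this with the core computation on the full-measure set where the chain rule is valid gives
\begin{equation*}
    u(a_{i+1},\gamma_v(a_{i+1})) - u(a_i,\gamma_v(a_i)) \leq \int_{a_i}^{a_{i+1}} L(s,\gamma_v(s),\dot\gamma(s))\,ds.
\end{equation*}
Letting $v\to 0$ along admissible directions and invoking continuity of $u$ on the left and dominated convergence (with $L$ continuous and the image compact) on the right yields the inequality for $\gamma$ on $[a_i,a_{i+1}]$. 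Summing over $i$ gives $u(b,\gamma(b))-u(a,\gamma(a))\leq \int_a^b L(s,\gamma(s),\dot\gamma(s))\,ds$, i.e.\ $\delta(u,\gamma)\geq 0$.
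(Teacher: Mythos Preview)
Your proof is correct and follows essentially the same approach as the paper. The paper isolates your Fubini-translation perturbation as a separate Lemma~\ref{Pertrub}, using a tubular neighbourhood of the graph of $\gamma$ in $\mathbb{R}\times M$ to straighten the curve and then translating in the fibre direction, whereas you subdivide into coordinate charts and translate directly; both arguments achieve the same goal of producing nearby $C^1$ curves lying in $\mathcal{U}$ for almost every time, and the core computation (chain rule, Fenchel, $\mathscr{H}=0$ on $\mathscr{L}$) is identical.
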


\begin{proof}
	Let $\gamma : [a,b] \to M$ be a $C^1$ curve on $M$ such that for almost every time $t \in [a,b]$, $(t,\gamma(t))$ belongs to the dense open set $\mathcal{U}$ defined in Proposition \ref{Graphext}.
	\begin{align*}
		u(b,\gamma(b)) - u(a,\gamma(a)) & = \int_a^b du(s,\gamma(s)).(1,\dot{\gamma}(s)) \;ds = \int_a^b \partial_t u(s,\gamma(s)) + d_q u (s,\gamma(s)).\dot{\gamma}(s) \;ds \\
		& \leq \int_a^b \partial_t u(s,\gamma(s)) + H\big(s,\gamma(s),d_q u (s,\gamma(s))\big) + L\big(s,\gamma(s),\dot{\gamma}(s)\big) \;ds \\
		& = \int_a^b \mathscr{H} \big(s, \partial_t u(s,\gamma(s)),\gamma(s),d_q u (s,\gamma(s))\big) \; ds + \int_a^b L(s,\gamma(s),\dot{\gamma}(s)) \; ds 
	\end{align*}
	where in the second line we used the Fenchel inequality (\ref{Fenchel}).
	
	Now by Proposition \ref{Graphext}, we have from the assumption on $\gamma$ that for almost all $s \in [a,b]$, 
	\begin{equation}
		\big(s, \partial_t u(s,\gamma(s)),\gamma(s),d_q u (s,\gamma(s))\big) \in \mathscr{L} \subset \{ \mathscr{H} = 0 \}
	\end{equation} 
	Then we get
	\begin{equation*}
		\int_a^b \mathscr{H} \big(s, \partial_t u(s,\gamma(s)),\gamma(s),d_q u (s,\gamma(s))\big) \; ds = 0
	\end{equation*}
	and
	\begin{equation*}
		\delta(u,\gamma) = \int_a^b L(s,\gamma(s),\dot{\gamma}(s)) \; ds - [u(b,\gamma(b)) - u(a,\gamma(a))] \geq 0
	\end{equation*}
	The general configuration case is given by the perturbation process described in Lemma \ref{Pertrub}.
\end{proof}

\begin{lem} \label{Pertrub}
	Let $\gamma : [a,b] \to M$ be a $C^1$ curve and let $\mathcal{U}$ be a full Lebesgue measure set of $\mathcal{M} = \mathbb{R} \times M$. Then there exists a sequence of $C^1$ curves $\gamma_k : [a,b] \to M$ such that
	\begin{enumerate}[label=\roman*.]
		\item For almost all $t \in [a,b]$, $(t, \gamma_k(t)) \in \mathcal{U}$.
		\item The sequence $(\gamma_k)_k$ converges to $\gamma$ in the $C^1$-topology.
	\end{enumerate}
\end{lem}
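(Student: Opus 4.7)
The plan is to perturb $\gamma$ within a finite-dimensional family of curves and use Fubini's theorem to select a perturbation whose graph meets $\mathcal{U}$ for almost every time. The perturbation family is built from a small parameter $v$ in $\mathbb{R}^d$. I would fix a Riemannian metric on $M$ and let $P_t : T_{\gamma(a)}M \to T_{\gamma(t)}M$ denote parallel transport along $\gamma$; identifying $T_{\gamma(a)} M$ with $\mathbb{R}^d$, define for $v$ in a small ball $B_\varepsilon \subset \mathbb{R}^d$ the curve
\begin{equation*}
    \gamma_v(t) := \exp_{\gamma(t)}\bigl(P_t\, v\bigr), \qquad t \in [a,b].
\end{equation*}

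Next, I would consider the map $\Phi : [a,b] \times B_\varepsilon \to [a,b] \times M$ defined by $\Phi(t,v) = (t, \gamma_v(t))$. At a point $(t,0)$, the differential in $v$ equals $P_t$ since $d\exp_{\gamma(t)}(0) = \mathrm{Id}_{T_{\gamma(t)}M}$, and the $t$-direction contributes $(1, \dot\gamma(t))$; the resulting Jacobian is therefore invertible for every $t \in [a,b]$. By compactness of $[a,b]$ and the inverse function theorem applied uniformly in $t$, one may choose $\varepsilon > 0$ small enough that $\Phi$ is a $C^1$ diffeomorphism onto an open neighbourhood of the graph $\{(t, \gamma(t)) : t \in [a,b]\}$ in $\mathbb{R} \times M$. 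Since $\mathcal{U}^c$ has zero Lebesgue measure in $\mathcal{M}$, and the Jacobian of $\Phi$ is bounded below on the compact set $[a,b] \times \overline{B_{\varepsilon/2}}$, the pullback $\Phi^{-1}(\mathcal{U}^c)$ has zero Lebesgue measure in $[a,b] \times B_\varepsilon$. Applying Fubini's theorem then yields that, for Lebesgue-almost every $v \in B_\varepsilon$, the slice $\{t \in [a,b] : (t, \gamma_v(t)) \notin \mathcal{U}\}$ has zero Lebesgue measure.

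From there, I would pick any sequence $v_k \to 0$ inside this full-measure subset of $B_\varepsilon$, setting $\gamma_k := \gamma_{v_k}$; property (i) holds by construction. Property (ii) is then established by noting that the family $v \mapsto \gamma_v$ is $C^1$ from $B_\varepsilon$ into $C^1([a,b], M)$: both $\gamma_v(t)$ and $\dot\gamma_v(t)$ depend continuously on $v$, uniformly in $t$, since $\exp$ and $P_t$ are smooth and $\gamma$ is $C^1$. Hence $\gamma_k \to \gamma_0 = \gamma$ and $\dot\gamma_k \to \dot\gamma$ uniformly on $[a,b]$, giving the desired $C^1$ convergence.

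The main conceptual ingredient is the Fubini reduction, which hinges on the fact that the perturbation map $\Phi$ preserves Lebesgue null sets in both directions. Once the perturbation family has been set up globally via parallel transport and $\exp$ on the compact arc $\gamma([a,b])$---a device needed precisely because $M$ is curved and lacks a canonical translation---the remainder is a routine measure-theoretic extraction. An alternative, if one prefers to avoid Riemannian machinery, is to cover $\gamma([a,b])$ by finitely many charts, perturb by genuine translations in each chart on a partition of $[a,b]$, and then smooth the result near the chart transitions; the Fubini argument transfers verbatim.
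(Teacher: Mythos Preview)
Your proof is correct and follows essentially the same strategy as the paper's: build a $d$-parameter family of curves near $\gamma$, show the associated map $(t,v)\mapsto (t,\gamma_v(t))$ is a diffeomorphism onto a tube around the graph, transfer the null set $\mathcal{U}^c$ through it, and apply Fubini to select good parameters $v_k\to 0$. The only difference is cosmetic: the paper invokes the tubular neighbourhood theorem abstractly to obtain a straightening $\psi$ with $\psi(t,\gamma(t))=(t,0)$ and then perturbs by constant vectors $q'_k$, whereas you construct the tube explicitly via parallel transport and the exponential map; your $\Phi$ is precisely an inverse of such a $\psi$.
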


\begin{proof}
	$\gamma : [a,b] \to M$ is a $C^1$ curve in $M$, then we can extend it to a $C^1$ curve defined on $(c,d) \supset [a,b]$. The curve $(t,\gamma(t))$ is $C^1$ embedded in $\mathcal{M} = \mathbb{R} \times M$, then it admits a tubular neighbourhood $\mathcal{O}$ and there exists a $C^1$ embedding $\psi : \mathcal{O} \to \mathbb{R} \times \mathbb{R}^n$ such that for all $t \in (c,d)$, $\psi(t, \gamma(t)) = (t,0)$.
	
	Take $\mathcal{R}' = [a,b] \times [-\varepsilon,\varepsilon]^n \subset \psi(\mathcal{O})$ and $\mathcal{R} = \psi^{-1}(\mathcal{R}')$. Denote by $l$ the Lebesgue measure on $\mathcal{M}$ and $l' = \psi_*(l)$ be a measure on $\psi( \mathcal{O})$. Since $\mathcal{U}_0 := \mathcal{U} \cap \mathcal{R}$ is of full measure in $\mathcal{R}$, $\mathcal{U}'_0 := \psi( \mathcal{U}_0)$ is of full measure in $\mathcal{R}'$. Then using Fubini
	\begin{align*}
		l'(\mathcal{R'}) = l'(\mathcal{U}'_0) = \int_{[-\varepsilon,\varepsilon]^n} \int_{([a,b] \times \{q'\})  \cap \mathcal{U}_0' } dt'\; dq' \leq \int_{[-\varepsilon,\varepsilon]^n} \int_{([a,b] \times \{q'\}) } dt'\; dq' = l'( \mathcal{R}')
	\end{align*}
	Hence, by positivity of the integrands, we get that for almost every $q'$ in $[-\varepsilon, \varepsilon ]^n$, 
	\begin{equation} \label{almostx}
		\int_{([a,b] \times \{q'\})  \cap \mathcal{U}_0' } dt' = \int_{([a,b] \times \{q'\}) } dt'
	\end{equation}   
	or in other words, for all such $q'$, for almost every $t'$ in $[a,b]$, $(t',q')$ belongs to $\mathcal{U}_0$.
	
	Let $(q'_k)_k$ be a sequence in $[-\varepsilon, \varepsilon]^n$ such that every element $q_k$ verifies the equality (\ref{almostx}) and the sequence converges to $0$ as $k$ goes to infinity. For all $k$, we define the constant curves $\gamma'_k \equiv q'_k: [a,b] \to \mathcal{R}'$ and $\gamma_k(t) = \psi^{-1}(t,\gamma'_k)$. The curves $\gamma_k'$ converge to $\psi(\gamma)$ in the $C^1$-topology and since $\psi$ is regular, the same holds for the curves $\gamma_k$. Moreover, from (\ref{almostx}) we know that for almost all $t$ in $[a,b]$, $(t, \gamma_k(t))$ belongs to $\mathcal{U}$ which concludes the proof.
\end{proof}

Now that the domination of $u$ has been established, we can take more interest in curves that achieve equality in this domination, or in other words curves having null calibration defect.

\begin{defi}
	A $C^1$ curve $\gamma : I \to M$ on $M$ is said \textit{calibrated} by $u$ if for all $[a,b] \subset I$, $\delta(u,\gamma_{|[a,b]}) = 0$.
\end{defi}

\begin{rem}
	We know from Proposition \ref{Domination} that for all times $t<t'$ and points $q$ and $q'$ of $M$,
	\begin{equation}
		u(t',q') - u(t,q) \leq \inf \left\{ \int_t^{t'} L(\tau,\gamma(\tau),\dot{\gamma}(\tau)) \; d\tau \; \left| \;
		\begin{matrix}
			\gamma : & [t,t'] \to M \\
			& s \mapsto q \\
			& t \mapsto q'
		\end{matrix} \right.	\right\}
	\end{equation}
	Hence, if $\gamma : [a,b] \to M$ is $u$-calibrated, then it is minimizing for the Lagrangian $L$.
\end{rem}

\begin{prop} \label{Fathi}
	\begin{enumerate}
		\item If $\gamma : I \to M$ is a $u$-calibrated curve, then for all time $t$ in the interior of $I$, $u$ is differentiable at $(t, \gamma(t))$ and
		\begin{equation}
			d_qu(t,\gamma(t)) = \partial_vL \big(t,\gamma(t), \dot{\gamma}(t) \big) \quad \text{and} \quad \mathscr{H}\big(t,\partial_tu(t,\gamma(t)), \gamma(t), d_qu(t,\gamma(t)) \big) =0
		\end{equation}

		\item Let $A_{\varepsilon,u}$ be the set of points $(t,q) \in \mathbb{R} \times M$ such that there exists a curve $\gamma_{t,q} : (t-\varepsilon, t+\varepsilon) \to M$ with $\gamma_{t,q}(t) = q$ which is $u$-calibrated. Then, the map 
		\begin{align*}
			A_{\varepsilon,u} &\longrightarrow T^*(\mathbb{R} \times M) \\
			(t,q) &\longmapsto du (t,q) = \big(t,\partial_tu(t,q), q, d_qu(t,q)\big)
		\end{align*}
		is locally Lipschitz. 
	\end{enumerate}		
\end{prop}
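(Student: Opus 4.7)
The plan is to prove the two parts together by first pinning down the differential of $u$ along $\gamma$ via a squeezing argument, then promoting pointwise differentiability to local Lipschitz regularity by exploiting the underlying Hamiltonian flow.

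For Part 1, I would begin by showing that a $u$-calibrated curve $\gamma$ minimizes the Lagrangian action among $C^1$ competitors with the same endpoints: for any $\sigma$ joining $\gamma(a)$ to $\gamma(b)$, domination (Proposition \ref{Domination}) gives $u(b,\gamma(b))-u(a,\gamma(a)) \leq \int_a^b L(\tau,\sigma,\dot\sigma)\,d\tau$, while calibration converts the left side into $\int_a^b L(\tau,\gamma,\dot\gamma)\,d\tau$. By Tonelli's theorem, $\gamma$ is then $C^2$ and its Legendre lift $t \mapsto (\gamma(t), \partial_v L(t,\gamma(t),\dot\gamma(t)))$ is a Hamiltonian orbit. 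Setting $q_0=\gamma(t_0)$, $v_0=\dot\gamma(t_0)$, $p_0=\partial_v L(t_0,q_0,v_0)$, and $E_0=-H(t_0,q_0,p_0)$, the Fenchel equality $L(t_0,q_0,v_0)=p_0 v_0+E_0$ identifies the candidate differential as $(E_0,p_0)$.

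To establish differentiability, I would pinch the difference quotient of $u$ at $(t_0,q_0)$ between matching one-sided bounds. For the upper bound at a nearby point $(t',q')$, interpolate by a smooth curve $\sigma$ with $\sigma(t_0)=q_0$, $\sigma(t')=q'$, and $\dot\sigma(t_0)=(q'-q_0)/(t'-t_0)$; domination plus the Fenchel inequality $L(t_0,q_0,v)\geq p_0 v+E_0$ yields $u(t',q')-u(t_0,q_0)\leq E_0(t'-t_0)+p_0(q'-q_0)+o(|(t',q')-(t_0,q_0)|)$. For the matching lower bound, I would split at a backward time $t_0-s$: calibration converts $u(t_0-s,\gamma(t_0-s))-u(t_0,q_0)$ into an integral involving $L$ along $\gamma$, and domination applied to a connecting curve from $\gamma(t_0-s)$ to $(t',q')$ gives the reverse estimate (the error controlled by letting $s$ tend to $0$ jointly with $(t',q')\to(t_0,q_0)$, using that $\gamma$ is $C^1$ and $L$ is $C^2$). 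These combine to show $u$ is differentiable at $(t_0,\gamma(t_0))$ with $du(t_0,\gamma(t_0))=(E_0,p_0)$; the energy identity $\mathscr{H}(t_0,E_0,q_0,p_0)=E_0+H(t_0,q_0,p_0)=0$ is immediate from the definition of $E_0$.

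For Part 2, by Part 1 one has on $A_{\varepsilon,u}$ the explicit formula
\begin{equation*}
	du(t,q) = \bigl(t,\,-H(t,q,p_{t,q}),\,q,\,p_{t,q}\bigr), \qquad p_{t,q}=\partial_v L\bigl(t,q,\dot\gamma_{t,q}(t)\bigr).
\end{equation*}
Since the Legendre map is $C^1$ and $H$ is $C^2$, it suffices to prove that $(t,q)\mapsto\dot\gamma_{t,q}(t)$ is locally Lipschitz on $A_{\varepsilon,u}$. The strategy is to use the Hamiltonian flow to parametrize calibrated lifts: fixing a small $\delta\in(0,\varepsilon/2)$, the correspondence $(q,p_{t,q})\mapsto \phi_H^{t,t+\delta}(q,p_{t,q})$ is a smooth diffeomorphism, so $(t,q,p_{t,q})$ is determined by the endpoint $\phi_H^{t,t+\delta}(q,p_{t,q})=(\gamma_{t,q}(t+\delta),\partial_v L(t+\delta,\gamma_{t,q}(t+\delta),\dot\gamma_{t,q}(t+\delta)))$. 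The locally Lipschitz control then comes from the standard a priori compactness and non-crossing lemmas for Tonelli minimizers: two calibrated curves through nearby points $(t_1,q_1),(t_2,q_2)\in A_{\varepsilon,u}$ must stay uniformly close in $C^1$ on $[t-\delta,t+\delta]$ (else action minimality fails in the short time $\delta$), which together with the smoothness of $\phi_H^{\cdot,\cdot}$ upgrades continuity to Lipschitz continuity.

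The main obstacle is the last step: promoting continuity of $(t,q)\mapsto\dot\gamma_{t,q}(t)$ to a quantitative Lipschitz estimate. Continuity is a soft compactness argument, whereas the Lipschitz bound needs a quantitative non-crossing / strict convexity estimate for action minimizers. I expect this to follow by combining the strict convexity of $L$ in the fibre (which yields a uniform lower bound on the second variation of the action on short intervals) with the Lipschitz dependence of the Hamiltonian flow on initial data, but making this sufficiently uniform over compact subsets of $A_{\varepsilon,u}$ is the delicate point.
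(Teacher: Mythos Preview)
Your Part 1 strategy is sound and is in fact the more classical weak-KAM route: building $C^1$ barriers $\psi^\pm$ around $u$ at $(t_0,\gamma(t_0))$ via domination/calibration and reading off $du=(E_0,p_0)$ directly from their common differential. The paper does the sandwiching too, but then proves the energy identity $\mathscr{H}(du)=0$ by a different device: it only extracts $\mathscr{H}(du)\geq 0$ from Fenchel, and for the reverse inequality invokes Clarke's generalized-gradient lemma together with the graph-selector property $du\in\mathscr{L}\subset\{\mathscr{H}=0\}$ on a full-measure set, whence $du$ lies in the convex hull of $\{\mathscr{H}=0\}$ in the fibre, which by strict convexity equals $\{\mathscr{H}\leq 0\}$. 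Your route avoids this extra structure and works for any dominated $u$; the paper's route exploits the specific origin of $u$. One minor correction: your upper bound as written (starting from $(t_0,q_0)$) only handles $t'>t_0$; for $t'<t_0$ you must anchor at a forward reference time $t_0+s$ along $\gamma$, exactly as in your lower bound but with the roles reversed. This is the $\psi^-$ barrier in the paper.

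Part 2 has a real gap. Your plan---invert the Hamiltonian flow over a short time $\delta$ and use a priori compactness plus a non-crossing principle---delivers continuity of $(t,q)\mapsto\dot\gamma_{t,q}(t)$ on $A_{\varepsilon,u}$, but not a Lipschitz bound: non-crossing and compactness arguments are inherently soft. You yourself flag that ``promoting continuity to a quantitative Lipschitz estimate'' is the obstacle, and the second-variation idea you suggest would, if carried out, essentially reproduce the correct mechanism by a longer road.

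The tool you are missing is Fathi's $C^{1,1}$ criterion (Lemma~\ref{C11Criterion} in the paper): if for every $(t,q)\in A_{\varepsilon,u}$ there is a linear form $\varphi_{(t,q)}$ with
\[
\bigl|u(s,y)-u(t,q)-\varphi_{(t,q)}(s-t,y-q)\bigr|\ \leq\ C\bigl(|s-t|^2+\Vert y-q\Vert^2\bigr)
\]
for a constant $C$ depending only on $\varepsilon$, then $(t,q)\mapsto du(t,q)$ is locally Lipschitz on $A_{\varepsilon,u}$ with constant $6C$. The paper obtains this quadratic squeeze by sharpening the Part~1 barriers: it builds explicit $C^2$ functions $\psi^\pm$ via the interpolated curves $\gamma^\pm_{(s,y)}$ anchored at $t^\pm=t\mp\varepsilon$, computes $d\psi^\pm(t,q)=(-H(t,q,p_0),p_0)$ using the Euler--Lagrange equation along $\gamma$, and bounds $\Vert d^2\psi^\pm\Vert_\infty$ uniformly via Mather's a priori compactness (Lemma~\ref{APrioriCompactness}), which confines $(\gamma,\dot\gamma,\ddot\gamma)$ to a compact set depending only on $\varepsilon$. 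Thus the quadratic sandwich you already set up for differentiability, once made uniform in $(t,q)$, is exactly what yields the Lipschitz regularity---no separate dynamical non-crossing argument is needed.
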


\begin{proof}
	\textit{1. Differentiability.} Fix $(t,q) = (t,\gamma(t))\in \mathbb{R} \times M$. We will bound $u$ in a neighbourhood of $(t,q)$ by two $C^1$ maps that coincide with it at this point. In order to do so, we use the domination inequality. Let $(s,y)$ be close enough to $(t,q)$ and fix two reference times $t^+<t<t^-$ and $q^\pm = \gamma(t^\pm)$. From the calibration
	\begin{equation} \label{Calibdiff}
		u(t,q) = u(t^\pm,q^\pm) + \int_{t^\pm}^t L\big(\tau,\gamma(\tau),\dot{\gamma}(\tau) \big)\; d\tau =: \psi^\pm(t,q)
	\end{equation}
	and from the domination
	\begin{equation} \label{psi+formula}
		u(s,y) \leq u(t^+,q^+) + \int_{t^+}^t L\big(\tau,\gamma^+_{(s,y)}(\tau),\dot{\gamma}^+_{(s,y)}(\tau) \big)\; d\tau =: \psi^+(s,y)
	\end{equation}
	and 
	\begin{equation}
		u(s,y) \geq u(t^-,q^-) - \int_t^{t^-} L\big(\tau,\gamma^-_{(s,y)}(\tau),\dot{\gamma}^-_{(s,y)}(\tau) \big)\; d\tau =: \psi^-(s,y)
	\end{equation}
	where, in a chart around $(t,q)$
	\begin{equation}
		\gamma^\pm_{(s,y)}(\tau) = \gamma(\tau) + \frac{\tau - t^\pm}{s - t^\pm} (y-\gamma(s))
	\end{equation}		
	are smooth families of curves linking $(t^\pm,q^\pm)$ to $(s,y)$ and such that $\gamma^\pm_{(t,q)}= \gamma$.\\
	It is easy to see that $\psi^\pm$ are $C^1$. Moreover, $\psi^- \leq u \leq \psi^+$ with equalities at $(t,q)$. Then $u$ is $C^1$ at $(t,q)$.\\
	
	\textit{Evaluation of the Differential.} We differentiate (\ref{Calibdiff}) with respect to time $t$ without forgetting that $q = \gamma(t)$, to get
	\begin{equation}
		\partial_tu(t,\gamma(t)) + d_q u(t,\gamma(t)) = L\big(t,\gamma(t), \dot{\gamma}(t) \big)
	\end{equation}
	And by Fenchel inequality (\ref{Fenchel}) for $q = \gamma(t)$, $v = \dot{\gamma}(t)$ and $p = d_q u(t,\gamma(t))$, we have
	\begin{multline} \label{CalibrationDem0}
		0 = \partial_tu(t,\gamma(t)) + d_q u(t,\gamma(t)). \dot{\gamma}(t) - L\big(t,\gamma(t), \dot{\gamma}(t) \big) \\
		\leq \partial_tu(t,q) + H(t,q,d_q u(t,q)) = \mathscr{H}\big(t,\partial_tu(t,\gamma(t)), \gamma(t), d_qu(t,\gamma(t)) \big)
	\end{multline}

	In order to obtain the equality, we need to show that $\mathscr{H}\big(t,\partial_tu(t,\gamma(t)), \gamma(t), d_qu(t,\gamma(t)) \big) \leq 0$. This will follow from the convexity of the Hamiltonian $H$ on fibres and from a result due to Clarke (see \cite{MR2346451} for a proof or \cite{MR0709590} for a more general result).
	\begin{lem} \label{ConvexHull}
		Let $f : U \to \mathbb{R}$ be a Lipschitz map defined on an open subset $U$ of $\mathbb{R}^d$ and let $U_0 \subset U$ be a subset of full Lebesgue measure. Let $q$ be a fixed element of $U$. We introduce the following sets
		\begin{equation}
			\begin{split}
				K^{U_0}_f(q) &:= \big\{ \text{limit points of } \big(df(q_n)\big)_{n \geq 0} \;  \big| \; q_n \in U_0, \; \lim\limits_n q_n = q  \big\} \subset T^*_qU \\
				C^{U_0}_f(q) &:= \conv (K^{U_0}_f(q))
			\end{split}
		\end{equation}
		where $\conv$ stands for the convex hull. Then, whenever $f$ is differentiable at a point $q \in U$, we have $df(q) \in C^{U_0}_f(q)$.
	\end{lem}	 
	We apply the lemma to the map $u : \mathbb{R} \times M \to \mathbb{R}$ and to the full measure subset $\mathcal{U}$ of $\mathbb{R} \times M$ introduced in Proposition \ref{Graphext}. We get that 
	\begin{equation} \label{CalibratinoDem1}
		du(t,\gamma(t)) = \big(\partial_tu(t,\gamma(t)), d_qu(t,\gamma(t))\big) \in C^{\mathcal{U}}_u(t,\gamma(t))
	\end{equation}

	Additionally, we know from the definition of the set $\mathcal{U}$ that for all $(s,q) \in \mathcal{U}$, $\big(s,\partial_tu(s,q), q, d_qu(s,q) \big)  \subset \mathscr{L} \subset \{\mathscr{H} =0 \}$. Hence, we deduce by continuity of $\mathscr{H}$ that 
	\begin{equation} \label{CalibrationDem2}
		K^\mathcal{U}_u(t,\gamma(t)) \subset \{\mathscr{H}=0\} \cap T^*_{(t,\gamma(t))}( \mathbb{R} \times M ) \quad \text{and} \quad C^\mathcal{U}_u(t,\gamma(t)) \subset \conv \{\mathscr{H}=0\} \cap T^*_{(t,\gamma(t))}( \mathbb{R} \times M )
	\end{equation}
	Moreover, 
	\begin{align*}
		\{\mathscr{H} \leq 0\} \cap T^*_{(t,\gamma(t))}( \mathbb{R} \times M ) = \{ (E,p) \in T^*_{(t,\gamma(t))}( \mathbb{R} \times M ) \; | \;  H(t,\gamma(t),p ) \leq -E \}
	\end{align*}
	which corresponds, up to the symmetry $E \mapsto -E$, to the epigraph of the strictly convex Hamiltonian $H$ restricted to the fibre $T^*_{(t,\gamma(t))}( \mathbb{R} \times M )$. Thus, this set is strictly convex with boundary (or extremal points)
	\begin{align*}
		\{ (E,p) \in T^*_{(t,\gamma(t))}( \mathbb{R} \times M ) \; | \;  H(t,\gamma(t),p ) = -E \} =  \{\mathscr{H} = 0\} \cap T^*_{(t,\gamma(t))}( \mathbb{R} \times M )
	\end{align*}
	Therefore, we deduce that $\conv \{\mathscr{H}=0\} \cap T^*_{(t,\gamma(t))}( \mathbb{R} \times M ) = \{\mathscr{H} \leq 0\} \cap T^*_{(t,\gamma(t))}( \mathbb{R} \times M )$. Regrouping the inclusions (\ref{CalibratinoDem1}) and (\ref{CalibrationDem2}), we obtain 
	\begin{equation*}
		du(t,\gamma(t)) \in \{\mathscr{H} \leq 0 \}  \cap T^*_{(t,\gamma(t))}( \mathbb{R} \times M ) \subset  \{\mathscr{H} \leq 0 \}
	\end{equation*}    
	and 
	\begin{equation} \label{CalibrationDem4}
		\mathscr{H} \big(t,\partial_tu(t,\gamma(t)), \gamma(t), d_qu(t,\gamma(t)) \big) \leq 0
	\end{equation}
	This implies the equality all along the inequalities of (\ref{CalibrationDem0}) and (\ref{CalibrationDem4}) leading to the desired identities.\\
	
	\textit{2. $C^{1,1}$ Regularity.} We aim to use Fathi's criterion for a Lipschitz derivative, the proof of which can be found in proposition 4.11.3 of \cite{fathi2008weak}.
	\begin{lem} \label{C11Criterion}
		Fix a point $q_0$ of $\mathbb{R}^d$ and a radius $r>0$. Let $u : B(q_0,r) \to \mathbb{R}$ be a function and let $C>0$ be a positive constant. We introduce the set $A^{C,u}$ of $B(q_0,r)$ as
		\begin{align*}
			A^{C,u} = \big\{q \in B(q_0,r) \; | \; \exists \varphi_q : \mathbb{R}^d \to \mathbb{R} \text{ linear, } \forall y\in B(q_0,r), \; \Vert u(y) - u(q) - \varphi_q(y-q) \Vert  \leq C\Vert y-q\Vert ^2 \big\}
		\end{align*}
		Then $u$ has for all $q \in A^{C,u}$, $d_qu = \varphi_q$ and the restriction of $q \mapsto d_qu$ to $\{ q \in A^{C,u} \; | \; \Vert q-q_0\Vert  \leq r /3\}$ is Lipschitz with Lipschitz constant $6C$.
	\end{lem}
	We will search for a constant $C$ such that the elements $A_{\varepsilon,u} \subset A^{C,u}$. Keeping the notation of the previous part of the proof, we set $t^\pm = t\pm \varepsilon$ and we consider a neighbourhood $[s^+,s^-] \times V$ of $(t,q)$ such that the curves $\gamma^\pm_{(s,y)}(\tau)$ remains in the chart we are working in for all $\tau \in [t^+,t^-]$. We take $t^+ + \varepsilon/2  < s^+ < t < s^- < t^- - \varepsilon/2$ and $V \subset B(q,\varepsilon)$.
	
	We have $\psi^- \leq u \leq \psi^+$ and $\psi^- (t,q) = u (t,q)= \psi^+(t,q)$. Thus, for $(s,y) \in (t^-, t^+) \times M$, 
	\begin{equation} \label{CalibRegDem0}
		\psi^-(s,y) - \psi^-(t,q) \leq u(s,y) - u(t,q) \leq \psi^+(s,y) - \psi^+(t,q)
	\end{equation}
	We estimate the right-hand side by applying a 2 Taylor expansion on the map $\psi^+$ at $(t,q)$.
	\begin{equation} \label{CalibRegDem11}
		\Big|  \psi^+(s,y)  - \psi^+(t,q) - d\psi^+(t,q).(s-t,y-q)  \Big| \leq \left\Vert d^2 \psi^+ \right\Vert_\infty^{[s^+, s^-] \times V} .\big( |s-t|^2 + \Vert y-q\Vert ^2 \big) 
	\end{equation}
	where $\Vert \cdot \Vert_\infty^{[s^+, s^-] \times V}$ stands for the $\Vert \cdot \Vert_\infty$-norm of the restriction to the set $[s^+, s^-] \times V$. We set 
	\begin{equation} \label{CalibRegDem12}
		C' = \left\Vert d^2 \psi^+ \right\Vert_\infty^{[s^+, s^-] \times V}
	\end{equation}
	We would like to see that it is independent of $(t,q)$. This is given by the following classical lemma first proved by John N. Mather in \cite{MR1109661}.
	
	\begin{lem}(A Priori Compactness) \label{APrioriCompactness}
		Let $L : TM \to \mathbb{R}$ be a Tonelli Lagrangian and fix a small positive $\varepsilon >0$. Then, there exists a compact subset $K_\varepsilon$ of $TM$ such that every minimizing curve $\gamma : [s,t] \to M$ with $t-s \geq \varepsilon$ verifies $(\gamma(\tau) , \dot{\gamma}(\tau)) \in K_\varepsilon$.
	\end{lem}
	Since the curve $\gamma$ is minimizing and of time length $2\varepsilon$, we infer from the à priori compactness that $(\gamma, \dot{\gamma})$ is contained in the compact set $K_{2\varepsilon}$. Moreover, since the minimizing curves follow the Lagrangian flow $\phi_L$ of $L$, we have $\ddot{\gamma}(\tau) = dv \circ X_L(\tau, \gamma(\tau),\dot{\gamma}(\tau))$ so that $\{\ddot{\gamma}(\tau) \; | \; \tau \in [t^+,t^-] \}$ is contained in a compact set that only depends on $\varepsilon$. Consequently, the set $\{(\gamma^\pm_{(s,y)}(\tau), \dot{\gamma}^\pm_{(s,y)}(\tau), \ddot{\gamma}^\pm_{(s,y)}(\tau) ) \; | \; (\tau,s,y) \in [t^+,t^-] \times [s^+, s^-] \times V \}$ is contained in a compact set $K$ of $T^*M$ independent of $(t,q)$. Moreover, we took $[t^+,t^-] \times [s^+,s^-] \times V \subset [t-\varepsilon, t+ \varepsilon] \times [t-\varepsilon/2, t+ \varepsilon/2] \times B(q,\varepsilon)$. This shows that the constant $C'$ only depends on $\varepsilon$.\\   
	
	Let us now evaluate $d\psi^+(s,y)$ where we recall the definition \eqref{psi+formula} of $\psi^+$. We have
	\begin{equation}
		\begin{split}
			\partial_s \psi^+(t,q) &= \int_{t^+}^t \partial_q L(\tau,\gamma(\tau), \dot{\gamma}(\tau)). \partial_s \gamma^+_{(t,q)}(\tau) \; d\tau \\
			&+ \int_{t^+}^t \partial_vL (\tau,\gamma(\tau), \dot{\gamma}(\tau)). \partial_s \dot{\gamma}^+_{(t,q)}(\tau) \; d\tau  + L(t,\gamma(t), \dot{\gamma}(t)) \\
			d_y \psi^+(t,q) &= \int_{t^+}^t \partial_q L(\tau,\gamma(\tau), \dot{\gamma}(\tau)).d_y \gamma^+_{(t,q)}(\tau) \; d\tau \\
			&+ \int_{t^+}^t \partial_vL (\tau,\gamma(\tau), \dot{\gamma}(\tau)).  d_y \dot{\gamma}^+_{(t,q)}(\tau) \; d\tau
		\end{split}
	\end{equation}
	so that
	\begin{equation} \label{CalibRegDem10}
		\begin{split}
			d\psi^+(t,q).(s-t,y-q) &=\partial_s \psi^+(t,q).(s-t) + d_y \psi^+(t,q).(y-q) \\
			&= \int_{t^+}^t \partial_q L(\tau,\gamma(\tau), \dot{\gamma}(\tau)).\big[ \partial_s \gamma^+_{(t,q)}(\tau). (s-t) + d_y \gamma^+_{(t,q)}(\tau). (y-q) \big] \; d\tau \\
			&+ \int_{t^+}^t \partial_vL (\tau,\gamma(\tau), \dot{\gamma}(\tau)). \big[ \partial_s \dot{\gamma}^+_{(t,q)}(\tau). (s-t) + d_y \dot{\gamma}^+_{(t,q)}(\tau). (y-q) \big] \; d\tau \\
			&+ L(t,\gamma(t), \dot{\gamma}(t)).(s-t) 
		\end{split}
	\end{equation}	
	
	We evaluate the differential of $\gamma^+_{(s,y)}$ and $\dot{\gamma}^+_{(s,y)}$ with respect to $s$ and $y$. Recall that
	\begin{align*}
		\gamma^+_{(s,y)}(\tau) = \gamma(\tau) + \frac{\tau - t^+}{s - t^+} (y-\gamma(s)) \quad \text{and} \quad \dot{\gamma}^+_{(s,y)}(\tau) = \dot{\gamma}(\tau) + \frac{1}{s - t^+} (y-\gamma(s))
	\end{align*}
	so that
	\begin{align*}
		\partial_s \gamma^+_{(s,y)}(\tau) &= - \frac{\tau - t^+}{(s-t^+)^2} (y- \gamma(s)) - \frac{\tau - t^+}{s- t^+} \dot{\gamma}(s), & \partial_s \gamma^+_{(t,q)}(\tau) &= - \frac{\tau - t^+}{t-t^+} \dot{\gamma}(t) \\
		d_y \gamma^+_{(s,y)}(\tau) &=  \frac{\tau - t^+}{s-t^+} dy, & d_y \gamma^+_{(t,q)}(\tau).(y-q) & = \frac{\tau - t^+}{t-t^+} (y-q)
	\end{align*}
	and
	\begin{align*}
		\partial_s \dot{\gamma}^+_{(s,y)}(\tau) &= - \frac{1}{(s-t^+)^2} (y- \gamma(s)) - \frac{1}{s- t^+} \dot{\gamma}(s), & \partial_s \dot{\gamma}^+_{(t,q)}(\tau) &= - \frac{1}{t-t^+} \dot{\gamma}(t) \\
		d_y \dot{\gamma}^+_{(s,y)}(\tau) &=  \frac{1}{s-t^+} dy, & d_y \dot{\gamma}^+_{(t,q)}(\tau).(y-q) & = \frac{1}{t-t^+} (y-q)
	\end{align*}
	Hence, we get
	\begin{multline*}
		\partial_q L(\tau,\gamma(\tau), \dot{\gamma}(\tau)).\big[ \partial_s \gamma^+_{(t,q)}(\tau). (s-t) + \partial_y \gamma^+_{(t,q)}(\tau). (y-q) \big]\\
		= \partial_q L(\tau,\gamma(\tau), \dot{\gamma}(\tau)). \big[ - \frac{\tau - t^+}{t-t^+}(s-t). \dot{\gamma}(t)  +   \frac{\tau - t^+}{t-t^+} (y-q)  \big] \\
		= (\tau - t^+). \partial_q L(\tau,\gamma(\tau), \dot{\gamma}(\tau)). \big[ - \frac{1}{t-t^+}(s-t). \dot{\gamma}(t)  +   \frac{1}{t-t^+} (y-q)  \big]
	\end{multline*}
	and
	\begin{multline*}
		\partial_vL (\tau,\gamma(\tau), \dot{\gamma}(\tau)). \big[ \partial_s \dot{\gamma}^+_{(t,q)}(\tau). (s-t) + \partial_y \dot{\gamma}^+_{(t,q)}(\tau). (y-q) \big] \\
		= \partial_vL (\tau,\gamma(\tau), \dot{\gamma}(\tau)). \big[ - \frac{1}{t-t^+}(s-t).\dot{\gamma}(t) + \frac{1}{t-t^+} (y-q) \big]
	\end{multline*}
	And since the curve $\gamma$ is calibrated, it is minimizing and it verifies the Euler-Lagrange equation (see proposition 2.2.6 of \cite{fathi2008weak})
	\begin{align*}
		(\tau - t^+). \partial_q L(\tau,\gamma(\tau), \dot{\gamma}(\tau)) + \partial_vL (\tau,\gamma(\tau), \dot{\gamma}(\tau)) &= (\tau - t^+). \frac{d}{d\tau} \big(  \partial_vL (\tau,\gamma(\tau), \dot{\gamma}(\tau)) \big) +  \partial_vL (\tau,\gamma(\tau), \dot{\gamma}(\tau))  \\
		&= \frac{d}{d\tau} \Big( (\tau - t^+).\partial_vL (\tau,\gamma(\tau), \dot{\gamma}(\tau)) \Big)
	\end{align*}
	Going  back to identity \eqref{CalibRegDem10}, we have shown that
	\begin{equation} \label{CalibRegDem13}
		\begin{split}
			d\psi^+(t,q).(s-t,y-q) &= (t - t^+).\partial_vL (t,\gamma(t), \dot{\gamma}(t)) . \big[ - \frac{1}{t-t^+}(s-t).\dot{\gamma}(t) + \frac{1}{t-t^+} (y-q) \big] \\
			&+ L(t,\gamma(t), \dot{\gamma}(t)).(s-t)  \\
			&= \big[ -\partial_vL (t,\gamma(t), \dot{\gamma}(t)).\dot{\gamma}(t) + L(t,\gamma(t), \dot{\gamma}(t)) \big].(s-t) + \partial_vL (t,\gamma(t), \dot{\gamma}(t)).(y-q) \\
			&= - H \big(t,q, \partial_vL (t,q, \dot{\gamma}(t)) \big) .(s-t) + \partial_vL (\tau,q, \dot{\gamma}(t)).(y-q)
		\end{split}
	\end{equation}
	where we last used the equality case of the Fenchel inequality (\ref{Fenchel}).
		
	Gathering \eqref{CalibRegDem11}, \eqref{CalibRegDem12} and \eqref{CalibRegDem13}, we obtain
	\begin{equation*}
		\psi^+(s,y)  - \psi^+(t,q) \leq - H \big(t,q, \partial_vL (t,q, \dot{\gamma}(t)) \big) .(s-t) + \partial_vL (\tau,q, \dot{\gamma}(t)).(y-q) + C' \big( |s-t|^2 + \Vert y-q\Vert ^2 \big) 
	\end{equation*}
	Analogously for $\psi^-$, we find a constant $C''>0$ depending only on $\varepsilon$ such that
	\begin{equation*}
		\psi^-(s,y)  - \psi^-(t,q) \geq - H \big(t,q, \partial_vL (t,q, \dot{\gamma}(t)) \big) .(s-t) + \partial_vL (\tau,q, \dot{\gamma}(t)).(y-q) - C'' \big( |s-t|^2 + \Vert y-q\Vert ^2 \big) 
	\end{equation*}
	And we finally get from (\ref{CalibRegDem0}) that
	\begin{align*}
		\big| u(s,y) - u(t,q) + H\big(t,q, \partial_vL (t,q, \dot{\gamma}(t)) \big).(s-t) - \partial_vL (\tau,q, \dot{\gamma}(t)).(y-q) \big| \leq C\big( |s-t|^2 + \Vert y-q\Vert ^2 \big)
	\end{align*}
	This allows to apply Lemma \ref{C11Criterion} and to conclude that 
	\begin{align*}
		d_qu(t,\gamma(t)) = \partial_vL \big(t,\gamma(t), \dot{\gamma}(t) \big), \quad \partial_tu(t,q) = -H (t,q,d_qu(t,q))
	\end{align*}
	and that the map $(t,q) \longmapsto du (t,q) = \big(t,\partial_tu(t,q), q, d_qu(t,q)\big)$ restricted to $A_{\varepsilon,u}$ is locally Lipschitz.
\end{proof}

\subsection{Setting and Notations} \label{SectionSetting}

We keep the same notations as in the statement of Theorem \ref{MainTheorem}.\\

Let $\mathcal{L}$ be a Lagrangian submanifold of $T^*M$, $H$-isotopic to the zero section $0_{T^*M}$, such that there exist two increasing sequences of integers $n_k$ and $m_k$ such that $(\mathcal{L}_{n_k})_{k\geq 0} = (\phi^{n_k}_H(\mathcal{L}))_{k\geq 0}$ and $(\mathcal{L}_{-m_k})_{k\geq 0} = (\phi^{-m_k}_H(\mathcal{L}))_{k\geq 0}$ respectively converge with reduced complexities to Lagrangian submanifolds $\mathcal{L}_\omega$ and $\mathcal{L}_\alpha$ which are $H$-isotopic to the zero section $0_{T^*M}$.\\

According to the Definition \ref{ReducedComplexity} of reduced complexity convergence, if we consider two Hamiltonian maps $\varphi_\alpha$ and $\varphi_\omega \in \Ham (T^*M, \omega)$ such that $\mathcal{L}_\alpha = \varphi_\alpha(0_{T^*M})$ and $\mathcal{L}_\omega = \varphi_\omega(0_{T^*M})$, then we have
\begin{enumerate} [label=\roman*.]
	\item $(\mathcal{L}_{-m_k})_{k\geq 0}$ and $(\mathcal{L}_{n_k})_{k\geq 0}$ converge respectively to $\mathcal{L}_\alpha$ and $\mathcal{L}_\omega$ in the Haussdorff topology.
	\item if we denote by $l^\alpha_k$ and $l^\omega_k$ respective Liouville primitives on $\varphi_\alpha^{-1}(\mathcal{L}_{-m_k})$ and $\varphi_\omega^{-1}(\mathcal{L}_{n_k})$, then we have $\lim_k \osc(l^\alpha_k) = \lim_k \osc(l^\omega_k) =0$.
\end{enumerate}

We choose the Liouville primitives $l^\alpha_k$ and $l^\omega_k$ on $\varphi_\alpha^{-1}(\mathcal{L}_{-m_k})$ and $\varphi_\omega^{-1}(\mathcal{L}_{n_k})$ as follows. The Hamiltonian maps $\varphi_\alpha$ and $\varphi_\omega$ are by definition exact symplectomorphisms of $(T^*M, \lambda)$ and there exist regular maps $f_\alpha$ and $f_\omega : T^*M \to \mathbb{R}$ such that
\begin{equation} 
	\varphi_\alpha^*\lambda - \lambda = df_\alpha \quad \text{and} \quad \varphi_\omega^*\lambda - \lambda = df_\omega
\end{equation}

Then, for all integer $k \in \mathbb{N}$, we choose the Liouville primitives following Lemma \ref{PrimitiveChange}.
\begin{equation} \label{Prim2}
	 l^\alpha_k=  h_{-m_k} \circ \varphi_\alpha - f_\alpha  \quad \text{and} \quad  l^\omega_k= h_{n_k}  \circ \varphi_\omega - f_\omega
\end{equation}\\

As in Section \ref{Extsection}, we extend $\mathcal{L}_\alpha$ and $\mathcal{L}_\omega$ respectively to exact Lagrangian submanifolds $\mathscr{L}_\alpha$ and $\mathscr{L}_\omega$ of $T^*\mathcal{M} = T^*(\mathbb{R} \times M)$. And we can successively construct :
\begin{enumerate}
	\item Liouville primitives $h_\alpha : \mathcal{L}_\alpha \to \mathbb{R}$ for $\mathcal{L}_\alpha$ and $h_\omega : \mathcal{L}_\omega \to \mathbb{R}$ for $\mathcal{L}_\omega$, with time evolution $h^\alpha_t$ and $h^\omega_t$ following (\ref{Primitive}).
	\item Liouville primitives $\mathpzc{h}_\alpha :\mathscr{L}_\alpha \to \mathbb{R}$ for $\mathscr{L}_\alpha$ and $\mathpzc{h}_\omega : \mathscr{L}_\omega \to \mathbb{R}$ for $\mathscr{L}_\omega$ with
	\begin{equation}
		\mathpzc{h}_\alpha(t,E_\alpha, q, p_\alpha) = h_{\alpha,t}(q,p_\alpha) \quad \text{and} \quad \mathpzc{h}_\omega(t,E_\omega, q, p_\omega) = h_{\omega,t}(q,p_\omega)
	\end{equation}
	whenever $(t,E_\alpha, q, p_\alpha) \in \mathscr{L}_\alpha$ and $(t,E_\omega, q, p_\omega) \in \mathscr{L}_\omega$.
	\item Graph selectors $u_\alpha : \mathbb{R} \times M \to \mathbb{R}$ for $\mathscr{L}_\alpha$ and $u_\omega : \mathbb{R} \times M \to \mathbb{R}$ for $\mathscr{L}_\omega$ respectively associated with $\mathpzc{h}_\alpha$ and $\mathpzc{h}_\omega$, following Subsection \ref{GSConstruction}. Proposition \ref{Graphext} provides two open full-measure subsets $\mathcal{U}_\alpha$ and $\mathcal{U}_\omega$ of $\mathbb{R} \times M$ where $u_\alpha$ and $u_\omega$ are respectively regular.
\end{enumerate}

Note that the Liouville primitives $l^\alpha = h_\alpha \circ \varphi_\alpha - f_\alpha$ and 
$l^\omega = h_\omega \circ \varphi_\omega - f_\omega$ of the zero section $\varphi_\alpha^{-1}( \mathcal{L}_\alpha ) = \varphi_\omega^{-1}( \mathcal{L}_\omega ) = 0_{T^*M}$ are constant. In this case, we have 
\begin{align*}
	\varphi^{-1}_\alpha(\mathcal{L}_{n_k}) \string # \overline{\varphi^{-1}_\alpha(\mathcal{L}_{\alpha})} =\varphi^{-1}_\alpha(\mathcal{L}_{n_k}) \quad \text{and} \quad  \varphi^{-1}_\omega(\mathcal{L}_{n_k}) \string # \overline{\varphi^{-1}_\omega(\mathcal{L}_{\omega})} =  \varphi^{-1}_\omega(\mathcal{L}_{n_k}) 
\end{align*}
with associated Liouville primitive $l^\alpha_k$ and $l^\omega_k$. This gives a meaning to $\osc(l^\alpha_k - l^\alpha) = \osc(l^\alpha_k)$ and $\osc(l^\omega_k - l^\omega) = \osc(l^\omega_k)$ where $l^\alpha$ and $l^\omega$ are considered to be elements of $\mathbb{R}$.

\subsection{Study of Limit Points} \label{LPsubsection}

Let us begin the proof of the main Theorem \ref{MainTheorem}. The most crucial step is dealt with in this subsection. The idea is to initially identify curves for which calibration can be easily established.\\

Let $x=(q,p)$ be a point of $\mathcal{L}$ and $x_\omega=(q_\omega, p_\omega)$ be a limit point of the sequence $\big(\phi_H^{n_k}(x)\big)_k$. We know from the Hausdorff convergence of $\mathcal{L}_{n_k}$ that the point $x_\omega$ belongs to $\mathcal{L}_\omega$. And set $x_\omega(t)=(q_\omega(t), p_\omega(t)) = \phi_H^t(x_\omega)$. We can assume, up to extraction, that 
\begin{equation} \label{Extract}
	\phi_H^{n_k}(x) \longrightarrow x_\omega \quad \text{and} \quad n_{k+1}-n_k \longrightarrow +\infty \quad \text{as } k \to \infty
\end{equation}

Similarly, let $x_\alpha=(q_\omega, p_\omega) \in \mathcal{L}_\alpha$ be a limit point of the sequence $\big(\phi_H^{-m_k}(x)\big)_k$. Set $x_\alpha(t)=(q_\alpha(t), p_\alpha(t)) = \phi_H^t(x_\alpha)$. And assume, up to extraction, that 
\begin{equation} \label{Extract2}
	\phi_H^{-m_k}(x) \longrightarrow x_\alpha \quad \text{and} \quad m_{k+1}-m_k \longrightarrow +\infty \quad \text{as } k \to \infty
\end{equation}

\begin{prop} \label{Calib}
	The curves $q_\alpha(t)$ and $q_\omega(t)$ are respectively calibrated by $u_\alpha$ and $u_\omega$.
\end{prop}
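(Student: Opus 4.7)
The proof reduces calibration to the constancy of a monotone scalar function, which is then forced by the two-sided reduced complexity hypothesis via a bridging Hamiltonian orbit.

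By Proposition \ref{Domination}, $u_\omega$ is dominated by $L$. Combining (\ref{Prim}) with the Fenchel equality $p\dot q - H = L$ along the Hamiltonian orbit $x_\omega(\tau) \in \mathcal{L}_\omega^\tau$ gives $h_{\omega,t}(x_\omega(t)) - h_{\omega,s}(x_\omega(s)) = \int_s^t L(\tau, q_\omega(\tau), \dot q_\omega(\tau))\, d\tau$. Hence the function
\begin{equation*}
\Phi_\omega(\tau) := u_\omega(\tau, q_\omega(\tau)) - h_{\omega, \tau}(x_\omega(\tau))
\end{equation*}
is non-increasing on $\mathbb{R}$, and calibration of $q_\omega$ by $u_\omega$ is equivalent to $\Phi_\omega$ being constant; the analogous non-increasing $\Phi_\alpha$ for $q_\alpha$, $u_\alpha$ must also be shown constant.

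The key device is the orbit of the original point $x$: the function $\Psi(\sigma) := u_\mathscr{L}(\sigma, \pi(z(\sigma))) - h_\sigma(z(\sigma))$, with $z(\sigma) = \phi_H^\sigma(x)$, is non-increasing on $\mathbb{R}$ by the same reasoning, and now bridges both asymptotic ends. Proposition \ref{Invariance} ensures that the spectral invariants of the fibred difference $S_{\mathcal{L}_{n_k}} \ominus S_{\mathcal{L}_\omega}$ are $\tau$-invariant under simultaneous Hamiltonian evolution, so Proposition \ref{Bound} applied slice by slice yields $\tau$-independent bounds
\begin{equation*}
a_k \leq u_\mathscr{L}(n_k + \tau, q) - u_\omega(\tau, q) \leq b_k, \qquad b_k - a_k \to 0,
\end{equation*}
the gap vanishing by Proposition \ref{Cont} and Proposition \ref{inegh} together with $\osc(l_k^\omega) \to 0$. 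Combined with the Lagrangian-action identity (\ref{Prim}) for $z$ and the $C^1$-convergence $q_k \to q_\omega$ on compact time intervals (which follows from the a priori compactness, Lemma \ref{APrioriCompactness}), we obtain $\Psi(n_k + \tau) = \Phi_\omega(\tau) + K_k + o_k(1)$ with $K_k := a_k - \mu_k$ and $\mu_k := h_{n_k}(y_k) - h_\omega(x_\omega)$, and the analogous $\Psi(-m_k + \tau) = \Phi_\alpha(\tau) + K_k^\alpha + o_k(1)$ via the $\alpha$-end.

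The crux is that $K_k \to 0$: applying Proposition \ref{inegh} to the Liouville primitive of $\mathcal{L}_{n_k} \#\overline{\mathcal{L}_\omega}$ (whose oscillation vanishes thanks to reduced complexity, arguing as in Proposition \ref{ReducedComplexityProp}), the spectral invariant $a_k$ matches asymptotically the primitive value $\mu_k$ evaluated at the concrete matched pair $(y_k, x_\omega)$, up to the same vanishing oscillation. Since $n_{k+j} - n_k \to \infty$ by (\ref{Extract}), for fixed $s < t$ and $j$ large we have $n_{k+j} + s > n_k + t$, and the monotonicity of $\Psi$ yields
\begin{equation*}
\Phi_\omega(s) - \Phi_\omega(t) \leq K_k - K_{k+j} + o(1) \longrightarrow 0.
\end{equation*}
Combined with $\Phi_\omega$ non-increasing, this forces $\Phi_\omega$ constant, establishing calibration of $q_\omega$ by $u_\omega$; the argument for $q_\alpha$ is entirely symmetric (time-reversed) via the $\alpha$-limit. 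The principal obstacle is precisely the estimate $K_k \to 0$, which is where the reduced complexity hypothesis becomes indispensable: it supplies the vanishing oscillation of the primitive on the fibred difference, forcing the spectral invariant and the concrete primitive value at the matched pair to agree asymptotically — a matching that would fail under merely Hausdorff (or even spectral) convergence.
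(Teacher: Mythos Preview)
Your argument is correct and follows essentially the same route as the paper: your monotone functions $\Phi_\omega$ and $\Psi$ are a repackaging of the paper's defect-of-calibration framework, your two-index comparison $n_k$ versus $n_{k+j}$ corresponds to the paper's interval $[a,b_k]$ with $b_k=a+n_{k+1}-n_k$, and your key estimate $K_k\to 0$ is exactly the paper's $\delta_k\to 0$, obtained from the same spectral bounds (Propositions~\ref{Invariance}, \ref{Bound}, \ref{inegh}) together with the vanishing oscillation hypothesis. The one point worth making explicit is that $\mu_k=h_{n_k}(y_k)-h_\omega(x_\omega)$ is not literally a value of the primitive on the fibred difference (since $y_k$ and $x_\omega$ need not share a base point), so the matching with $a_k$ requires the change of variable via $\varphi_\omega$ and the continuity of the map $f_\omega$ with $\varphi_\omega^*\lambda-\lambda=df_\omega$, which is what your reference to Proposition~\ref{ReducedComplexityProp} is meant to cover.
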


\begin{proof}
	We only prove the calibration for $q_\omega(t)$. The case of $q_\alpha(t)$ is done analogously. Let $a<b$ be two times and $b_k = a + n_{k+1}-n_k$. Set $u_k(t,q) = u(t + n_k, q)$, $x_k =(q_k,p_k)= \phi_H^{n_k}(x)$ and $x_k(t) = (q_k(t),p_k(t)) = \phi_H^t(x_k)$. In order to compute the calibration defect $\delta(u_\omega, q_{\omega|[a,b]})$, we need to link it to
	\begin{equation*}
		\delta(u_k, q_{k|[a,b]}) = \int_a^{b} L(s,q_k(s),\dot{q}_k(s)) \; ds - [u_k(b,q_k(b)) - u_k(a,q_k(a))]
	\end{equation*}
	This requires to know more on the asymptotic $C^1$ convergence of the curve $q_k$ and on the $C^0$ convergence of maps $u_k$ restricted to $[a,b] \times M$.
	
	\begin{lem} \label{LemDeltaConv}
		We have
		\begin{equation*}
			\delta(u_\omega, q_{\omega|[a,b]}) = \lim\limits_k  \delta(u_k, q_{k|[a,b]})
		\end{equation*}
	\end{lem}
	\begin{proof}
		Since the points $x_k=(q_k,p_k)$ converge to $x_\omega=(q_\omega,p_\omega)$, the curves $x_{k|[a,b]}$ converge uniformly to $x_{\omega|[a,b]}$. We also know from (\ref{Fenchel}) that $p_k(t) = \partial_v L(t,q_k(t),\dot{q}_k(t))$ and $p_\omega(t) = \partial_v L(t,q_\omega(t),\dot{q}_\omega(t))$. Thus the curves $q_{k|[a,b]}$ converge to $q_{\omega|[a,b]}$ in the $C^1$ topology so that 
		\begin{equation} \label{CalibDemu0}
			\lim\limits_k \int_a^{b} L(s,q_k(s),\dot{q}_k(s)) \; ds = \int_a^{b} L(s,q_\omega(s),\dot{q}_\omega(s)) \; ds
		\end{equation}	 
	
		For the comparison between the $u$ maps, we have for all time $t$, $u_k(t)$ and $u_\omega(t)$ are respective graph selectors of the Lagrangian submanifolds $\mathcal{L}_{t+n_k}$ and $\mathcal{L}_{\omega +t}:= \phi^t_H(\mathcal{L}_\omega)$. Then, applying Proposition \ref{Bound}, we have
		\begin{equation} \label{CalibDemu1}
			c( 1, \mathcal{L}_{t+n_k} \string # \overline{\mathcal{L}_{\omega +t}} ) \leq u_k(t) - u_\omega(t) \leq c( \mu, \mathcal{L}_{t+n_k} \string # \overline{\mathcal{L}_{\omega +t}} )
		\end{equation}
		and an application of Propositions \ref{Invariance} and \ref{inegh} gives
		\begin{equation}\label{CalibDemu2}
			c( 1, \mathcal{L}_{t+n_k} \string # \overline{\mathcal{L}_{\omega +t}}) = c( 1, \mathcal{L}_{n_k} \string # \overline{\mathcal{L}_{\omega}}) = c( 1, \varphi^{-1}_\omega(\mathcal{L}_{n_k}) \string # \overline{\varphi^{-1}_\omega(\mathcal{L}_{\omega})}) \geq \min (l^\omega_k -l^\omega)
		\end{equation}
		and 
		\begin{equation}\label{CalibDemu3}
			c( \mu, \mathcal{L}_{t+n_k} \string # \overline{\mathcal{L}_{\omega +t}} ) = c( \mu, \varphi^{-1}_\omega(\mathcal{L}_{n_k}) \string # \overline{\varphi^{-1}_\omega(\mathcal{L}_{\omega})}) \leq \max (l^\omega_k -l^\omega)
		\end{equation}
		where $l^\omega$ is the constant Liouville primitive on $\varphi^{-1}_\omega(\mathcal{L}_{\omega}) = 0_{T^*M}$. Gathering these three inequalities (\ref{CalibDemu1}), (\ref{CalibDemu2}) and (\ref{CalibDemu3}), we obtain for all $(t,q) \in \mathbb{R} \times M$
		\begin{equation} \label{ConstantDem0}
			\min (l^\omega_k -l^\omega) \leq  u_k(t,q) - u_\omega(t,q) \leq \max (l^\omega_k -l^\omega)
		\end{equation}
		Applying these inequalities at $t=a$ and $t=b$ yields
		\begin{equation}
			\big| [ u_k(b,q_k(b)) - u_\omega(b,q_k(b)) ] - [ u_k(a,q_k(a)) - u_\omega(a,q_k(a)) ]    \big| \leq \osc(l^\omega_k) \longrightarrow 0 \quad \text{as } k \to \infty
		\end{equation}
		Hence, we get
		\begin{multline} \label{CalibDemu4}
			\big| [ u_k(b,q_k(b)) - u_k(a,q_k(a)) ] - [ u_\omega(b,q_\omega(b)) - u_\omega(a,q_\omega(a)) ]    \big| \\
			\begin{split}
				&\leq \big| [ u_k(b,q_k(b)) - u_k(a,q_k(a)) ] - [ u_\omega(b,q_k(b)) - u_\omega(a,q_k(a)) ] \big| \\
				&+ \big| u_\omega(b,q_\omega(b)) - u_\omega(b,q_k(b)) \big| + \big|  u_\omega(a,q_\omega(a)) - u_\omega(a,q_k(a)) \big| \\
				&\leq \osc (l^\omega_k)  + \big| u_\omega(b,q_\omega(b)) - u_\omega(b,q_k(b)) \big| + \big|  u_\omega(a,q_\omega(a)) - u_\omega(a,q_k(a)) \big| 
			\end{split}
		\end{multline}
		Moreover, we know from the convergence of the curves $q_{k|[a,b]}$ to $q_{\omega|[a,b]}$ and the continuity of $u_\omega$ that
		\begin{align}\label{CalibDemu5}
			\lim_k u_\omega(b,q_k(b)) = u_\omega(b,q_\omega(b)) \quad \text{and} \quad \lim_k u_\omega(a,q_k(a)) = u_\omega(a,q_\omega(a))
		\end{align}
		Hence, we deduce from (\ref{CalibDemu4}) and (\ref{CalibDemu5}) that
		\begin{equation}\label{CalibDemu6}
			\lim_k u_k(b,q_k(b)) - u_k(a,q_k(a)) =  u_\omega(b,q_\omega(b)) - u_\omega(a,q_\omega(a))
		\end{equation}
		Gathering (\ref{CalibDemu0}) and (\ref{CalibDemu6}), we conclude that
		\begin{align*}
			\lim_k \delta(u_k, q_{k|[a,b]})  &= \lim_k \int_a^{b} L(s,q_k(s),\dot{q}_k(s)) \; ds - \lim_k [u_k(b,q_k(b)) - u_k(a,q_k(a))] \\
			&= \int_a^{b} L(s,q_\omega(s),\dot{q}_\omega(s)) \; ds + [u_\omega(b,q_\omega(b)) - u_\omega(a,q_\omega(a))] = \delta(u_\omega, q_{\omega|[a,b]})
		\end{align*}
	\end{proof}
	From assumption (\ref{Extract}), we have inclusion $[a,b] \subset [a,b_k]$ for large $k$. Thus, the lemma and the positivity of the defect of calibration (Proposition \ref{Domination}) lead to
	\begin{equation}
		0 \leq \delta(u_\omega, q_{\omega|[a,b]}) = \lim\limits_k \delta(u_k, q_{k|[a,b]}) \leq \liminf_k \delta(u_k, q_{k|[a,b_k]})
	\end{equation}
	Let us evaluate
	\begin{align*}
		\delta(u_k,q_{k|[a,b_k]}) & = \int_a^{b_k} L(s,q_k(s),\dot{q}_k(s)) \; ds - [u_k(b_k,q_k(b_k)) - u_k(a,q_k(a))]
	\end{align*}
	From the Fenchel's equality case, we know that for all time $s \in \mathbb{R}$,
	\begin{equation}
		L(s+n_k,q_k(s),\dot{q}_k(s)) = p_k(s).\dot{q}_k(s) - H(s+n_k,q_k(s), p_k(s))
	\end{equation}
	Set $\zeta_k(t) = (t+ n_k, E_k(t),q_k(t),p_k(t))$ to be a curve in $\mathscr{L}$. Since $\mathscr{L} \subset \{ \mathscr{H}=0 \}$ ,We have
	\begin{equation}
		E_k(t) = - H\big(t + n_k,q_k(t), p_k(t)\big)
	\end{equation}
	Therefore, using the time one periodicity of $L$, we get 
	\begin{align*}
		\int_a^{b_k} L(s,q_k(s),\dot{q}_k(s)) \; ds &= \int_a^{b_k} L(s+n_k,q_k(s),\dot{q}_k(s)) \; ds \\
		&= \int_a^{b_k} p_k(s).\dot{q}_k(s) - H(s+n_k,q_k(s), p_k(s)) \; ds \\
		&=  \int_a^{b_k} p_k(s).\dot{q}_k(s) + E_k(s) \; ds = \int_\zeta \Lambda_{|\mathscr{L}} =\int_\zeta d \mathpzc{h} \\
		&= \mathpzc{h}(\zeta(b_k)) - \mathpzc{h}(\zeta(a)) = h_{a+n_{k+1}}(x_k(b_k)) - h_{a+n_k}(x_k(a))
	\end{align*}	
	To simplify the notation, set $x^a_k =(q^a_k, p^a_k)= x_k(a)$ and note that 
	\begin{align*}
		x_k(b_k)= x_k(a+n_{k+1}-n_k) = \phi_H^{a+n_{k+1}-n_k}(x_k) = \phi_H^{a}(x_{k+1})=x^a_{k+1}
	\end{align*}		
	and
	\begin{align*}
		u_k(b_k) = u(b_k+n_k) = u(a+n_{k+1}) = u_{k+1}(a)
	\end{align*}
	Hence
	\begin{equation} \label{defect}
		\delta(u_k,q_{k|[a,b_k]}) = [h_{a+n_{k+1}}(x_{k+1}^a) - h_{a+n_k}(x_k^a)] - [u_{k+1}(a,q^a_{k+1}) - u_k(a,q^a_k)]
	\end{equation}\\
	
	We insert the terms $h^\omega_a(x^a_\omega)$ and $u_\omega(a,q^a_\omega)$ as follows
	\begin{equation} \label{CalibDemDelta0}
		\begin{split}
			\delta(u_k,q_{k|[a,b_k]}) &= [h_{a+n_{k+1}}(x_{k+1}^a) - h^\omega_a(x^a_\omega)] - [u_{k+1}(a,q^a_{k+1})  - u_\omega(a,q^a_\omega)] \\ 
			& + [h^\omega_a(x^a_\omega) - h_{a+n_k}(x_k^a)] - [u_{\omega}(a,q^a_\omega) - u_{k}(a,q^a_k) ] 
		\end{split}
	\end{equation}
	We need to prove that the sequence 
	\begin{equation} \label{CalibDemDelta1}
		\delta_k = [h_{a+n_k}(x_k^a) - h^\omega_a(x^a_\omega)] - [u_{k}(a,q^a_k) - u_{\omega}(a,q^a_\omega) ]
	\end{equation}
	converges to $0$. This involves comparing $u_{k+1} - u_{k}$ and $"h_{a+n_{k+1}} -  h_{a+n_k}"$ where the second term does not make sense, justifying the need to use the primitives $l_t$ introduced in (\ref{Prim2}). 
	
	Let $c_k$ be any value in the image of  $u_{k}(a) - u_\omega(a)$. We infer from the inequalities (\ref{ConstantDem0}) that
	\begin{equation}
		\Vert l^\omega_{k} - l^\omega - c_k \Vert _\infty \leq Osc(l^\omega_{k}) \quad \text{and} \quad \Vert u_k(a) - u_\omega(a) - c_k \Vert _\infty \leq Osc(u_k(a) - u_\omega(a)) \leq Osc(l^\omega_k)
	\end{equation}			
	We get 
	\begin{align} \label{CalibDemDelta2}
		|\delta_k| & \leq | h_{a+n_k}(x_k^a) - h^\omega_a(x^a_\omega) - c_k| + |u_{k}(a,q^a_k) - u_{\omega}(a,q^a_\omega) - c_k| 
	\end{align}
	The choice of the constant $c_k$ leads to
	\begin{align*}
		|u_{k}(a,q^a_k) - u_{\omega}(a,q^a_\omega) - c_k| &\leq |u_k(a,q^a_\omega) - u_{\omega}(a,q^a_\omega) - c_k| + | u_{\omega}(a,q^a_\omega) -  u_\omega(a,q^a_k) | \\
		& \leq Osc(u_k(a) - u_\omega(a)) + | u_{\omega}(a,q^a_\omega) -  u_\omega(a,q^a_k) | \\
		& \leq Osc(l_k)+ | u_{\omega}(a,q^a_\omega) -  u_\omega(a,q^a_k) | \longrightarrow 0 \quad \text{ as } k \to \infty
	\end{align*}
	where we used the continuity of $u_\omega$. For the remaining term of (\ref{CalibDemDelta2}), we first use the identity (\ref{Prim}) on $h$ and $h^\omega$ to get
	\begin{multline*}
		| h_{a+n_k}(x_k^a) - h^\omega_a(x^a_\omega) - c_k| \leq \left|\int_0^a (x_k^*\lambda - x_{\omega}^*\lambda) - \int_0^a H(\tau, x_k(\tau)) - H(\tau, x_{\omega}(\tau)) \; d\tau \right| \\ + | h_{n_k}(x_k) - h^\omega_0(x_\omega) - c_k|
	\end{multline*}
	Since the curve $x_k$ converges to $x_\omega$ uniformly on $[0,a]$, and since by the identity \eqref{LegendreMapsEL} the curve $\dot{q}_k$ converges uniformly to $q_\omega$ on the same interval $[0,a]$, we infer that
	\begin{align*}
		\left|\int_0^a (x_k^*\lambda - x_{\omega}^*\lambda) - \int_0^a H(\tau, x_k(\tau)) - H(\tau, x_{\omega}(\tau)) \; d\tau \right|  \longrightarrow 0 \quad \text{ as } k \to \infty
	\end{align*}
	Set $\tilde{x}_{k} = \varphi^{-1}_\omega(x_{k})$ and $\tilde{x}_{\omega} = \varphi^{-1}_\omega(x_{\omega})$. Then, applying (\ref{Prim2}) yields
	\begin{align*}
		| h_{n_k}(x_k) - h^\omega_0(x_\omega) - c_k| & \leq | l^\omega_k(\tilde{x}_k) - l^\omega - c_k| + |f_\omega(\tilde{x}_k) - f_\omega(\tilde{x}_\omega)| \\
		&\leq Osc( l^\omega ) +|f_\omega(\tilde{x}_k) - f_\omega(\tilde{x}_\omega)| \longrightarrow 0 \quad \text{ as } k \to \infty
	\end{align*}
	where we used the continuity of the map $f_\omega$. This establishes the limit $\lim_k \delta_k =0$ and hence
	\begin{align*}
		0 \leq \delta(u_\omega,q_{|[a,b]}) \leq \liminf_k \delta(u_k,q_{k|[a,b_k]}) = \liminf_k (\delta_{k+1} - \delta_k) =0
	\end{align*}
\end{proof}

\subsection{Study of all Points} \label{APsubsection}

We now extend the previous $u_\omega$-calibration result to the $u$-calibration of every Hamiltonian curve included in $\mathscr{L}$.

\begin{prop} \label{Calibration}
	Let $x = (q,p)$ be a point of $\mathcal{L}$ and $x(t) = (q(t),p(t)) = \phi_H^t(x)$. Then the curve $q(t)$ is calibrated by $u$.
\end{prop}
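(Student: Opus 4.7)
The plan is to reduce the statement to Proposition \ref{Calib} by a monotonicity argument. Fix $x=(q,p)\in\mathcal{L}$, write $x(t)=\phi_H^t(x)=(q(t),p(t))$, and introduce
\begin{equation*}
G(t):=h_t(x(t))-u(t,q(t)).
\end{equation*}
Fenchel's equality along $(q,p)$ combined with formula \eqref{Prim} gives $\int_a^b L(s,q(s),\dot q(s))\,ds=h_b(x(b))-h_a(x(a))$, so that $\delta(u,q_{|[a,b]})=G(b)-G(a)$. Domination (Proposition \ref{Domination}) forces $\delta\ge 0$, hence $G$ is non-decreasing, and Proposition \ref{Calibration} is equivalent to $G$ being constant.

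Next I compute the limits of $G$ along the sequences $n_k$ and $-m_k$. After a joint extraction so that $x_k=\phi_H^{n_k}(x)\to x_\omega$ and $y_k=\phi_H^{-m_k}(x)\to x_\alpha$, I transpose the estimate $\delta_k\to 0$ from the proof of Proposition \ref{Calib}: write $h_{n_k}(x_k)=l^\omega_k(\varphi_\omega^{-1}(x_k))+f_\omega(\varphi_\omega^{-1}(x_k))$ via \eqref{Prim2}, pick $c_k\in[\min(l^\omega_k-l^\omega),\max(l^\omega_k-l^\omega)]$, and use \eqref{ConstantDem0} to obtain both $|u(n_k,q_k)-u_\omega(0,q_k)-c_k|\le\osc(l^\omega_k)$ and $|l^\omega_k(\varphi_\omega^{-1}(x_k))-l^\omega-c_k|\le\osc(l^\omega_k)$. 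The constant $c_k$ cancels, and continuity of $f_\omega$ and $u_\omega$ together with $l^\omega=h_\omega\circ\varphi_\omega-f_\omega$ yields
\begin{equation*}
G(n_k)\longrightarrow h^\omega_0(x_\omega)-u_\omega(0,q_\omega),\qquad G(-m_k)\longrightarrow h^\alpha_0(x_\alpha)-u_\alpha(0,q_\alpha).
\end{equation*}

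The final step is to identify each of these two limits with zero. By Proposition \ref{Calib}, the curve $q_\omega$ is $u_\omega$-calibrated, so by Proposition \ref{Fathi} the function $u_\omega$ is differentiable at $(0,q_\omega)$ with $d_q u_\omega(0,q_\omega)=p_\omega$ and $du_\omega(0,q_\omega)\in\{\mathscr{H}=0\}$. On the regular set $\mathcal{U}_\omega$ one has the identity $u_\omega=\mathpzc{h}_\omega\circ du_\omega$, and I extend it to $(0,q_\omega)$ as follows. Lemma \ref{ConvexHull} places $du_\omega(0,q_\omega)$ in the convex hull of $K^{\mathcal{U}_\omega}_{u_\omega}(0,q_\omega)\subset\mathscr{L}_\omega\cap\{\mathscr{H}=0\}\cap T^*_{(0,q_\omega)}\mathcal{M}$; since strict convexity of $H$ in $p$ makes $\{\mathscr{H}\le 0\}\cap T^*_{(0,q_\omega)}\mathcal{M}$ a strictly convex set with boundary $\{\mathscr{H}=0\}\cap T^*_{(0,q_\omega)}\mathcal{M}$ consisting entirely of extreme points, the point $du_\omega(0,q_\omega)$ is itself in $K^{\mathcal{U}_\omega}_{u_\omega}(0,q_\omega)$. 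Picking $(t_n,q_n)\in\mathcal{U}_\omega$ with $(t_n,q_n)\to(0,q_\omega)$ and $du_\omega(t_n,q_n)\to du_\omega(0,q_\omega)$ and passing to the limit in $u_\omega(t_n,q_n)=\mathpzc{h}_\omega(du_\omega(t_n,q_n))$ gives $u_\omega(0,q_\omega)=h^\omega_0(x_\omega)$. The same argument yields $u_\alpha(0,q_\alpha)=h^\alpha_0(x_\alpha)$.

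Combining everything, $G$ is non-decreasing with $G(n_k)\to 0$ and $G(-m_k)\to 0$ while $n_k,m_k\to+\infty$, so $G\equiv 0$ on $\mathbb{R}$, which is calibration of $q$ by $u$. The main obstacle is the last extension of the graph-selector identity $u_\omega=\mathpzc{h}_\omega\circ du_\omega$ to the (possibly singular) point $(0,q_\omega)$; it is forced by strict convexity of $H$ exactly as in the proof of Proposition \ref{Fathi}, and everything else is a routine transposition of the estimates already carried out in Proposition \ref{Calib}.
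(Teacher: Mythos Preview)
Your proof is correct and follows essentially the same route as the paper. The paper works directly with $\delta(u,q_{|[-m_k,n_k]})$ and shows it converges to $\delta_\infty=[h^\omega_0(x_\omega)-u_\omega(0,q_\omega)]-[h^\alpha_0(x_\alpha)-u_\alpha(0,q_\alpha)]$, then proves $\delta_\infty=0$ via a lemma whose proof is word-for-word your strict-convexity argument; your introduction of the monotone function $G(t)=h_t(x(t))-u(t,q(t))$ is just a cleaner repackaging of the same computation, since $\delta(u,q_{|[-m_k,n_k]})=G(n_k)-G(-m_k)$ and the paper's $\delta^k_\omega,\delta^k_\alpha$ are precisely your $G(n_k)$ and $G(-m_k)$ minus their respective limits.
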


In the proof of the Main Theorem \ref{MainTheorem}, we will see that this calibration, by Fathi's result \ref{Fathi} on calibrated curves, ensures that the submanifolds $\mathcal{L}_t$ are graphs over the zero section of $T^*M$ for all times $t \in \mathbb{R}$.            

\begin{proof}
	We use the notation $x_t = (q_t,p_t)$ instead of $x(t) = (q(t),p(t))$. Let $a<b$ be two real times. We will estimate the defect of calibration $\delta(u,q_{|[a,b]})$. But first, let $x_\alpha=(q_\alpha,p_\alpha)\in \mathcal{L}_\alpha$ and $x_\omega=(q_\omega,p_\omega) \in \mathcal{L}_\omega$ be respective limit points of the sequences $\big(\phi_H^{-m_k}(x)\big)_{k\geq 0}$ and $\big(\phi_H^{n_k}(x)\big)_{k\geq 0}$. For $k$ large enough, $[a,b] \subset [-m_k,n_k]$, hence
	\begin{equation}
		0 \leq \delta(u,q_{|[a,b]}) \leq \liminf_k \delta(u,q_{|[-m_k,n_k]})
	\end{equation}
	and from the same computation that led to (\ref{defect}), we get
	\begin{equation}
		\begin{split}
			\delta(u,q_{|[-m_k,n_k]}) & = [h_{n_k}(x_{n_k}) - h_{-m_k}(x_{-m_k})] - [u(n_k,q_{n_k}) - u(-m_k,q_{-m_k}) ]\\
			&= [h_{n_k}(x_{n_k}) - u(n_k,q_{n_k})] - [h_{-m_k}(x_{-m_k}) -  u(-m_k,q_{-m_k}) ]
		\end{split}
	\end{equation}
	Let $\delta^k_\omega$ and $\delta^k_\alpha$ be defined as 
	\begin{align*}
		\delta^k_\alpha &=[h_{-m_k}(x_{-m_k}) -  u(-m_k,q_{-m_k})] - [h^\alpha_0(x_\alpha) - u_\alpha(0,q_\alpha)] \\
		&= [h_{-m_k}(x_{-m_k}) - h^\alpha_0(x_\alpha)] - [  u(-m_k,q_{-m_k})- u_\alpha(0,q_\alpha)]
	\end{align*}	
	and
	\begin{align*}
		\delta^k_\omega &=[h_{n_k}(x_{n_k}) - u(n_k,q_{n_k})] - [h^\omega_0(x_\omega) - u_\omega(0,q_\omega)] \\
		&= [h_{n_k}(x_{n_k}) - h^\omega_0(x_\omega)] - [u(n_k,q_{n_k})- u_\omega(0,q_\omega)]
	\end{align*}
	we have $x_{-m_k}$ converges to $x_\alpha$ and $x_{n_k}$ converges to $x_\omega$. Thus the same control established for (\ref{CalibDemDelta1}) results in the limits
	\begin{equation}
		\delta^k_\omega \longrightarrow 0 \quad \text{and} \quad \delta^k_\alpha \longrightarrow 0 \quad \text{as } k \to \infty
	\end{equation}
	As a result, $\delta(u,q_{|[-m_k,n_k]})$ converges to $\delta_\infty$ given by
	\begin{equation}
		\delta_\infty  = [h^\omega_0(x_\omega) - u_\omega(0,q_\omega)] - [h^\alpha_0(x_\alpha) - u_\alpha(0,q_\alpha)]
	\end{equation}

	We claim the following lemma
	\begin{lem} \label{ConvexityLemma}
		We have
		\begin{equation}
			u_\alpha(0,q_\alpha) = h^\alpha_0(x_\alpha) \quad \text{and} \quad u_\omega(0,q_\omega) = h^\omega_0(x_\omega)
		\end{equation}
	\end{lem}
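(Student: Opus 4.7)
The plan is to prove the $\omega$-identity; the $\alpha$-identity follows by the same argument applied to $(x_\alpha,\mathcal{L}_\alpha,u_\alpha,h^\alpha_0)$. First I would invoke Proposition~\ref{Calib} to get that $q_\omega(t)$ is $u_\omega$-calibrated, and then Proposition~\ref{Fathi} to conclude differentiability of $u_\omega$ at $(0,q_\omega)$ with
\[
d_q u_\omega(0,q_\omega)=p_\omega,\qquad \partial_t u_\omega(0,q_\omega)=-H(0,q_\omega,p_\omega).
\]
Equivalently, $du_\omega(0,q_\omega)$ equals the point $\bigl(0,-H(0,q_\omega,p_\omega),q_\omega,p_\omega\bigr)$, which belongs to $\mathscr{L}_\omega\cap\{\mathscr{H}=0\}$ by the very construction of the extension in Section~\ref{Extsection}. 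Proposition~\ref{Graphext} provides the pointwise identity $u_\omega(t,q)=\mathpzc{h}_\omega\bigl(t,\partial_t u_\omega(t,q),q,d_q u_\omega(t,q)\bigr)$ on the open full-measure set $\mathcal{U}_\omega$, and $\mathpzc{h}_\omega$ evaluated at the above point is by definition $h^\omega_0(x_\omega)$. Thus the lemma will follow by continuity as soon as I can produce a sequence $(t_n,q_n)\in\mathcal{U}_\omega$ converging to $(0,q_\omega)$ along which $du_\omega(t_n,q_n)\to du_\omega(0,q_\omega)$.

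To obtain such a sequence I would apply Clarke's Lemma~\ref{ConvexHull} to $u_\omega$ and $\mathcal{U}_\omega$, obtaining
\[
du_\omega(0,q_\omega)\in \conv\bigl(K^{\mathcal{U}_\omega}_{u_\omega}(0,q_\omega)\bigr).
\]
By Proposition~\ref{Graphext}, every differential $du_\omega(t,q)$ for $(t,q)\in\mathcal{U}_\omega$ lies in the closed set $\mathscr{L}_\omega\subset\{\mathscr{H}=0\}$, so the closed set $K^{\mathcal{U}_\omega}_{u_\omega}(0,q_\omega)$ is contained in the fibre of $\mathscr{L}_\omega$ over $(0,q_\omega)$ and in particular in $\{\mathscr{H}=0\}$, whence $\conv(K^{\mathcal{U}_\omega}_{u_\omega}(0,q_\omega))\subset\{\mathscr{H}\leq 0\}$.

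The main obstacle is now to upgrade the convex-combination membership to the honest membership $du_\omega(0,q_\omega)\in K^{\mathcal{U}_\omega}_{u_\omega}(0,q_\omega)$. For this I would recycle the strict convexity argument used in the proof of Proposition~\ref{Fathi}: restricted to the fibre over $(0,q_\omega)$, $\{\mathscr{H}\leq 0\}$ is the hypograph of the strictly concave map $p\mapsto -H(0,q_\omega,p)$, hence a strictly convex set whose extremal points are exactly the boundary $\{\mathscr{H}=0\}$ in that fibre. Since $du_\omega(0,q_\omega)$ belongs to that extremal set and also to the smaller convex set $\conv(K^{\mathcal{U}_\omega}_{u_\omega}(0,q_\omega))\subset\{\mathscr{H}\leq 0\}$, extremality transfers downward; finite-dimensional convex geometry (Krein--Milman, or Carath\'eodory combined with strict convexity) then forces $du_\omega(0,q_\omega)$ to lie in the closed compact set $K^{\mathcal{U}_\omega}_{u_\omega}(0,q_\omega)$ itself, which by definition yields the desired approximating sequence in $\mathcal{U}_\omega$.

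With the sequence at hand the conclusion is routine: passing to the limit in
\[
u_\omega(t_n,q_n)=\mathpzc{h}_\omega\bigl(t_n,\partial_t u_\omega(t_n,q_n),q_n,d_q u_\omega(t_n,q_n)\bigr)
\]
and using continuity of $u_\omega$ on $\mathcal{M}$ together with continuity of $\mathpzc{h}_\omega$ on $\mathscr{L}_\omega$ (inherited from smoothness of $\phi_\mathscr{H}$ and the integral formula~\eqref{Primitive} defining $h^\omega_t$) gives $u_\omega(0,q_\omega)=h^\omega_0(x_\omega)$. Applying the verbatim construction at $(x_\alpha,q_\alpha,u_\alpha,h^\alpha_0)$ proves the second equality.
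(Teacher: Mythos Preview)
Your argument is correct and follows essentially the same route as the paper's proof: calibration from Proposition~\ref{Calib}, differentiability and the identity $\mathscr{H}=0$ from Proposition~\ref{Fathi}, Clarke's Lemma~\ref{ConvexHull} to place $du_\omega(0,q_\omega)$ in the convex hull $C^{\mathcal{U}_\omega}_{u_\omega}(0,q_\omega)$, the strict-convexity/extremality argument to upgrade this to membership in $K^{\mathcal{U}_\omega}_{u_\omega}(0,q_\omega)$, and finally passage to the limit along the resulting $\mathcal{U}_\omega$-sequence using continuity of $u_\omega$ and $\mathpzc{h}_\omega$. The only cosmetic difference is that the paper treats the $\alpha$-case explicitly while you treat the $\omega$-case.
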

	This lemma leads to the nullity of $\delta_\infty$ and consequently to
	\begin{equation}
		0 \leq \delta(u,q_{|[a,b]}) \leq \liminf_k \delta(u,q_{|[-m_k,n_k]}) = \delta_\infty = 0 
	\end{equation}
	which implies the calibration.
\end{proof}
	
\begin{proof}[Proof of Lemma \ref{ConvexityLemma}]
	We only prove the equality for $x_\alpha= (q_\alpha, p_\alpha)$. Recall from Proposition \ref{Calib} that $q_\alpha(t)$ is $u_\alpha$-calibrated. Then we can apply Proposition \ref{Fathi} at time zero to get that $u_\omega(t,q)$ is differentiable at $(0,q_\alpha)$ and 
	\begin{equation}
		d_q u_\alpha(0,q_\alpha) = \partial_v L(0,q_\alpha, \dot{q_\alpha}(0))=p_\alpha \quad \text{and} \quad  \mathscr{H}(0,\partial_t u_\alpha(0,q_\alpha), q_\alpha, p_\alpha)=0
	\end{equation}
	This implies that 
	\begin{equation} \label{ConvexityLemmaDem1}
		du_\alpha(0,q_\alpha)= (\partial_t u_\alpha(0,q_\alpha), d_qu_\alpha(0,q_\alpha)) \in \{\mathscr{H}=0 \} \cap T^*_{(0,q_\alpha)}(\mathbb{R} \times M)
	\end{equation}
	
	Moreover, We know from Lemma \ref{ConvexHull} that 
	\begin{align*}
		du_\alpha(0,q_\alpha) = (\partial_t u_\alpha(0,q_\alpha), d_qu_\alpha(0,q_\alpha)) \in C_{u_\alpha}(0,q_\alpha) \subset \{\mathscr{H}\leq 0 \} \cap T^*_{(0,q_\alpha)}(\mathbb{R} \times M)
	\end{align*}
	where the last inclusion, due to the strict convexity of the set $\{\mathscr{H}=0\} \cap T^*_{(0,q_\alpha)}(\mathbb{R} \times M)$, has been established in the proof of Proposition \ref{Fathi}. This strict convexity tells that $\{\mathscr{H}=0\} \cap T^*_{(0,q_\alpha)}(\mathbb{R} \times M)$ is the set of extremal points of its convex hull $ \{\mathscr{H}\leq 0 \} \cap T^*_{(0,q_\alpha)}(\mathbb{R} \times M)$. Thus, we infer from the inclusion (\ref{ConvexityLemmaDem1}) that $du_\alpha(0,q_\alpha)$ is an extremal point of the convex set $C_{u_\alpha}(0,q_\alpha)$, or in other words that $du_\alpha(0,q_\alpha) \in K_{u_\alpha}(0,q_\alpha)$. By definition of the set $K_{u_\alpha}(0,q_\alpha)$, there exists a sequence $(t_n,q_n)$ in the set $\mathcal{U}_\alpha$ introduced in Proposition \ref{Graphext} converging to $(0,q_\alpha)$ such that $du_\alpha(t_n,q_n)$ converges to $du_\alpha(0,q_\alpha) = (\partial_tu_\alpha(0,q_\alpha), p_\alpha )$.
	
	By definition of the set $\mathcal{U}_\alpha$, we have 
	\begin{align*}
		\big(t_n, \partial_tu_\alpha(t_n,q_n),q_n,d_qu_\alpha(t_n,q_n)\big) \in \mathscr{L}_\alpha \quad  \text{and} \quad  u_\alpha(t_n,q_n) = \mathpzc{h}_\alpha\big(t_n, \partial_tu_\alpha(t_n,q_n),q_n,d_qu_\alpha(t_n,q_n)\big)
	\end{align*}
	We conclude that $\big(0, \partial_tu_\alpha(0,q_\alpha),q_\alpha,p_\alpha \big) \in \mathscr{L}_\alpha$ and, by continuity of the maps $u_\alpha$ and $\mathpzc{h}_\alpha$, that 
	\begin{align*}
		u_\alpha(0,q_\alpha)&= \lim\limits_n u_\alpha(t_n,q_n) = \lim\limits_n \mathpzc{h}_\alpha\big(t_n, \partial_tu_\alpha(t_n,q_n),q_n,d_qu_\alpha(t_n,q_n)\big) \\
		&= \mathpzc{h}_\alpha\big(0, \partial_tu_\alpha(0,q_\alpha),q_\alpha,p_\alpha \big) = h^\alpha_0(q_\alpha,p_\alpha) = h^\alpha_0(x_\alpha)
	\end{align*}
\end{proof}

\subsection{Proofs of the Main Results}

We have seen in Propositions \ref{Calib} and \ref{Calibration} that reduced complexity convergence in both positive and negative times implies that the extended Lagrangian submanifold $\mathscr{L}$ is foliated by calibrated curves. By applying Fathi's Theorem \ref{Fathi} on calibrated curves, it will follow that $\mathscr{L}$ is a graph over the zero section of $T^*(\mathbb{R} \times M)$.      

\begin{proof}[Proof of Theorem \ref{MainTheorem}]
	Let $t \in \mathbb{R}$ be a fixed time. For all $x = (q,p)$ in $\mathcal{L}_t$ and $x(s) = (q(s),p(s)) = \phi_H^{t,s}(q,p)$, we know from Proposition \ref{Calibration} that $q(s)$ is calibrated by $u$. Hence, we infer from Proposition \ref{Fathi} that $u$ is differentiable at $(t,q)$ and $d_qu(t,q) = \partial_v L(t,q,\dot{q}(t)) = p$. Thus, if we denote by $\mathcal{G}(du_t)$ the graph of $du_t=d_qu(t,\cdot)$ in $T^*M$, we get the inclusion $\mathcal{L}_t \subset \mathcal{G}(du_t)$. And since the projection $\mathcal{L}_t \to M$ is onto, we conclude that $\mathcal{L}_t = \mathcal{G}(du_t)$ is a graph over $0_{T^*M}$.
	
	In order to obtain regularity, using the notation of Proposition \ref{Fathi}, we proved that for all $\varepsilon >0$, we have  $A_{\varepsilon,u} = \mathbb{R} \times M$. Hence, $du$ is locally Lipschitz on $\mathbb{R} \times M$. And since the Lagrangian submanifolds $\mathcal{L}_t$ are $C^1$ regular, we conclude that they are $C^1$ graphs over the zero section $0_{T^*M}$. 
\end{proof}

\begin{proof}[Proof of Corollary \ref{MainCor2}]
	The proof of this result is based on tools coming from the weak-KAM theory. We will show that the graph selector $u(t,x)$ is a recurrent (viscosity) solution of the Hamiltonian-Jacobi equation (\ref{HJ}) associated with the Hamiltonian $H$. We present the concepts in the Appendix \ref{AppendixLO}.
	
	All the proofs of the paper remain unchanged by adding a constant to the Hamiltonian $H$. Hence, up to considering $H-\alpha_0$, we can assume that the \Mane critical value $\alpha_0$ is null. We consider the positive Lax-Oleinik operator $\mathcal{T}^{s,t}_+$ and we show that for all times $s>t$, $\mathcal{T}^{s,t}_+u(s) = u(t)$. Fix a point $q$ in $M$ and two times $s>t$. We saw in Proposition \ref{Calibration} that there exists a $u$-calibrated curve $q(\tau)$ with $q(t) =q$. Then, we have
	\begin{equation*}
		u(t,q) = u(s,q(s)) - h^{t,s}(q,q(s)) \leq \mathcal{T}^{s,t}_+u(s)(q) = \sup_{q' \in M} \big\{ u(s,q') - h^{t,s}(q,q') \big\}
	\end{equation*}
	Moreover, we know from Proposition \ref{Domination} that for any point $q'$ in $M$, we have 
	\begin{align*}
		u(s,q') - u(t,q) \leq h^{s,t}(q,q')
	\end{align*}
	yielding 
	\begin{align*}
		 \mathcal{T}^{s,t}_+u(s)(q) = \sup_{q' \in M} \big\{ u(s,q') - h^{t,s}(q,q') \big\} \leq u(t,q)
	\end{align*}
	We obtained the equality $u(t) = \mathcal{T}^{s,t}_+u(s)$.
	
	Fix a point $q_0$ in $M$. By Proposition \ref{BoundedLO} the family $\big(\mathcal{T}_+^{s,t}u(s)\big)_{t\leq s-1}$ is uniformly bounded for the $C^0$ norm. Hence, we can assume up to extraction that the sequence $u(-m_n,q_0)$ converges to a limit $c_\alpha \in \mathbb{R}$. Adding a constant to $u_\alpha$, we can also assume that $u_\alpha(q_0) = c_\alpha$. Then, since we have already established that $\lim_n \osc (u_{-m_n} - u_\alpha) =0$, we obtain the convergence of $u_{-m_n}$ to $u_\alpha$ in the $C^0$-topology.
	
	We further assume up to extraction that the sequence $m_{n+1} - m_n$ diverges to infinity. We consider the negative Lax-Oleinik operator $\mathcal{T}_-^{s,t}$ with $\mathcal{T}_-^t := \mathcal{T}_-^{0,t}$. Analogously to the above, the map $u$ verifies for all times $s<t$, $u(t) = \mathcal{T}_-^{s,t}u(s)$. Therefore, the non-expansiveness of the $\mathcal{T}_-^{s,t}$ given by Proposition \ref{NonExpansiveness} leads to
	\begin{align*}
		\Vert u(-m_{n+1} + m_n) - u(0) \Vert _\infty  & = \Vert  \mathcal{T}_-^{-m_{n+1} + m_n} u(0) - u(0) \Vert _\infty \\
		& = \Vert \mathcal{T}_-^{m_n} u_{-m_{n+1}} - \mathcal{T}_-^{m_n}u_{-m_n} \Vert _\infty   \\
		& \leq \Vert u_{-m_{n+1}} - u_{-m_n} \Vert _\infty \\
		& \leq \Vert u_{-m_{n+1}} - u_\alpha \Vert _\infty + \Vert u_\alpha - u_{-m_n} \Vert _\infty  \longrightarrow 0 \quad \text{as } n \to \infty
	\end{align*}
	This shows that $u$ is a recurrent map. In particular, it is possible to take $u_\alpha$ and $u_\omega$ to be equal to $u_0$. The definition of reduced complexity convergence provides the respective Hausdorff convergence of the graphs $\mathcal{L}_{-m_k}$ and $\mathcal{L}_{n_k}$ of $du_{-m_k}$ and $du_{n_k}$ to the graph $\mathcal{L}$ of $du_0=du_\alpha=du_\omega$. This concludes the proof of the $C^0$-recurrence of the Lagrangian $\mathcal{L}$.
\end{proof} 

\begin{rem}
	More generally, this Corollary is a direct consequence of Theorem \ref{MainTheorem} and the following Proposition 
	\begin{prop}
		Let $u:\mathbb{R} \times M \to \mathbb{R}$ be a $C^1$ global solution of the Hamilton-Jacobi equation 
		\begin{equation}
			\partial_tu + H(t,q,d_qu(t,q)) = \alpha_0
		\end{equation}
		where $\alpha_0$ is the critical \Mane value introduced in Proposition \ref{ManeCritValueProp}. Then $u$ is $C^1$-recurrent in positive and negative (integer) times.
	\end{prop}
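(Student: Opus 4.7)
The plan is to recast the integer-time slices $u(n,\cdot)$, $n\in\mathbb{Z}$, as a bi-infinite orbit of a non-expansive map on a compact quotient space, to invoke the classical fact that a surjective non-expansive self-map of a compact metric space is an isometry, and finally to upgrade the resulting $C^0$ recurrence to $C^1$ recurrence using the $C^{1,1}$ regularity of Tonelli viscosity solutions.

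After replacing $H$ by $H-\alpha_0$, set $T := \mathcal{T}_-^{0,1}$. Since $H$ is $1$-periodic in time and $u$ is a $C^1$ (hence viscosity) solution, the semigroup property of Lax-Oleinik gives $u(n+1,\cdot)=T\big(u(n,\cdot)\big)$ for every $n\in\mathbb{Z}$. The operator $T$ commutes with additive constants and is non-expansive in the $C^0$-norm (Proposition \ref{NonExpansiveness}), so it descends to a non-expansive map $[T]$ on the quotient $X:=C^0(M)/\mathbb{R}$, and $[T]^n[u(0,\cdot)]=[u(n,\cdot)]$ defines a bi-infinite $[T]$-orbit. Standard weak-KAM bounds for Tonelli Hamiltonians yield a Lipschitz constant $K=K(H)$ for the image of any bounded function under $\mathcal{T}_-^{0,1}$; hence $\{u(n,\cdot)\}_{n\in\mathbb{Z}}$ is equi-Lipschitz and $\{[u(n,\cdot)]\}_{n\in\mathbb{Z}}$ is relatively compact in $X$ by Arzela-Ascoli. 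Let $\mathcal{K}$ denote its closure. Then $[T]:\mathcal{K}\to\mathcal{K}$ is continuous and, the orbit being bi-infinite, surjective; a classical theorem asserts that a surjective non-expansive self-map of a compact metric space is an isometry, so $[T]\!\mid_\mathcal{K}$ is an isometry with isometric inverse.

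A standard isometric-orbit argument then produces $C^0$ recurrence in both time directions. By compactness of $\mathcal{K}$, extract a subsequence with $[u(n_k,\cdot)]$ Cauchy in $X$ and $n_{k+1}-n_k\to+\infty$. Since $[T]^{n_k}$ is an isometry mapping $[u(0,\cdot)]\mapsto[u(n_k,\cdot)]$ and $[u(n_{k+1}-n_k,\cdot)]\mapsto[u(n_{k+1},\cdot)]$, one obtains
\[
d_X\big([u(n_{k+1}-n_k,\cdot)],[u(0,\cdot)]\big)=d_X\big([u(n_{k+1},\cdot)],[u(n_k,\cdot)]\big)\to0,
\]
and applying $[T]^{-1}$ yields the analogous sequence $-m_k\to-\infty$. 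Concretely, there exist constants $c_k,c_k'\in\mathbb{R}$ with $\|u(n_{k+1}-n_k,\cdot)-u(0,\cdot)-c_k\|_\infty\to0$ and $\|u(-m_k,\cdot)-u(0,\cdot)-c_k'\|_\infty\to0$.

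To promote this to $C^1$ recurrence I use that $u(n,\cdot)$ is simultaneously an image of $\mathcal{T}_-^{0,1}$ (hence semi-concave) and of $\mathcal{T}_+^{1,0}$ (hence semi-convex), so it is $C^{1,1}$ with a uniform Lipschitz constant for $d_qu(n,\cdot)$ depending only on $H$. Arzela-Ascoli then yields, up to further extraction, a $C^0$-limit $v$ of $d_qu(n_k,\cdot)$; passing to distributional derivatives in the $C^0$-limit $u(n_k,\cdot)-c_k\to u(0,\cdot)$ forces $v=d_qu(0,\cdot)$, and uniqueness of the limit upgrades this to convergence of the full (sub)sequence. Thus $\mathcal{G}(d_qu(n_k,\cdot))$ converges to $\mathcal{G}(d_qu(0,\cdot))$ in Hausdorff distance, and likewise for $(-m_k)$, which is the asserted $C^1$ recurrence. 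I expect the main obstacles to be (i) producing the uniform weak-KAM Lipschitz and semi-concavity bounds for one-step Lax-Oleinik images, and (ii) cleanly citing the isometry theorem for non-expansive surjections on compact metric spaces — both classical but not explicit in the excerpt.
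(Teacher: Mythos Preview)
Your proposal is correct. Both your $C^0$ and $C^1$ arguments are valid, but they differ in emphasis from the paper's sketch.

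For the $C^0$ recurrence, the paper (in the proof of Corollary~\ref{MainCor2}) bypasses the isometry theorem entirely: once the family $\{u(n,\cdot)\}_{n\in\mathbb{Z}}$ is relatively compact, one simply extracts a $C^0$-Cauchy subsequence $u(-m_k,\cdot)$ and applies non-expansiveness directly,
\[
\Vert u(m_{k}-m_{k+1},\cdot)-u(0,\cdot)\Vert_\infty=\Vert \mathcal{T}_-^{m_k}u(-m_{k+1},\cdot)-\mathcal{T}_-^{m_k}u(-m_k,\cdot)\Vert_\infty\le\Vert u(-m_{k+1},\cdot)-u(-m_k,\cdot)\Vert_\infty\to0.
\]
This one-line computation is exactly the heart of the Freudenthal--Hurewicz isometry theorem you invoke, so your route is equivalent but packaged with more machinery; the paper's version is shorter and avoids passing to the quotient $C^0(M)/\mathbb{R}$ (boundedness in $C^0$ itself comes from Proposition~\ref{BoundedLO}).

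For the $C^1$ upgrade the situation is reversed: the paper simply cites Arnaud's theorem \cite{MR2150356} that $C^0$-convergence of Lax--Oleinik iterates forces Hausdorff convergence of the differential graphs, a result valid even when the limit is merely Lipschitz. Your argument---uniform $C^{1,1}$ bounds from simultaneous semi-concavity and semi-convexity, then Arzel\`a--Ascoli on the derivatives---is more elementary and fully self-contained, but it crucially exploits the hypothesis that $u$ is already $C^1$, which Arnaud's result does not need. In the present setting your approach is arguably preferable; it trades a black-box citation for a few lines of standard analysis.
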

	
	The proof of the $C^0$-recurrence is included in the proof of Corollary \ref{MainCor2}. However, the $C^1$-recurrence is more intricate and follows from a result by M-C.Arnaud \cite{MR2150356} which states that if $\mathcal{T}_{\pm}^{n_k}u$ converges in $C^0$-topology to a scalar map $v$, then the graph of $d\mathcal{T}_{\pm}^{n_k}u$ converges for the Hausdorff distance to the graph of $dv$.
\end{rem}

\appendix

\section{The Positive Lax-Oleinik Operator} \label{AppendixLO}

This appendix is devoted to defining the positive Lax-Oleinik operator and discussing two of its basic properties. An exposition on the analogous negative Lax-Oleinik operator in the non-autonomous framework can be found in \cite{Representation}. The negative operator generates what we call viscosity solutions of the Hamilton-Jacobi equation, while the positive operator introduces new objects. Examples can still be constructed using Peierls barriers, which we define below.

\begin{defi}
	Fix two times $s<t$.
	\begin{enumerate}
		\item The \textit{potential} $h_0^{s,t}: M \times M \to \mathbb{R}$ is defined as
		\begin{equation} \label{Potential0}
			h_0^{s,t} (x,y) = \inf \left\{ \int_s^t L(\tau,\gamma(\tau), \dot{\gamma}(\tau)) \; d\tau \; \left| \;
			\begin{matrix}
				\gamma : & [s,t] \to M \\
				& s \mapsto x \\
				& t \mapsto y
			\end{matrix} \right.	\right\}
		\end{equation}
		where the infimum is taken over such absolutely continuous curves $\gamma$.
		
		\item For any constant $c \in \mathbb{R}$, we consider the \textit{$(L+c)$-potential} $h_c^{s,t}: M \times M \to \mathbb{R}$ defined by
		\begin{equation} \label{Potentialc}
			h_c^{s,t} = h_0^{s,t} + c.(t-s)
		\end{equation}
		
		\item The \textit{Peierls Barrier} $h_c^{s,\infty+t} : M \times M \to \mathbb{R}$ is defined as
		\begin{equation}\label{PeierlsBarrier}
			h_c^{s,\infty+t}(x,y) = \liminf_n h_c^{s,n+t}(x,y)
		\end{equation}
	\end{enumerate}
\end{defi}

The following proposition justifies the definition of the Peierls barrier as a non trivial object.
\begin{prop} \label{ManeCritValueProp}
	There exists a real constant $\alpha_0$, known as the \textit{\Mane critical value}, such that
	\begin{enumerate}[label=\roman*.]
		\item If $c < \alpha_0$, then for all times $s,t \in \mathbb{R}$, $h_c \equiv -\infty $.
		\item If $c > \alpha_0$, then for all times $s,t \in \mathbb{R}$, $h_c \equiv +\infty $.
		\item The Peierls Barrier $h^{s,\infty+t}:= h^{s,\infty+t}_{\alpha_0}$ is everywhere finite and Lipschitz on $M \times M$.
	\end{enumerate}
\end{prop}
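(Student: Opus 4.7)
The strategy is to construct $\alpha_0$ by a subadditivity argument on the actions of closed loops, deduce (i) and (ii) as immediate consequences of its defining property, and reduce (iii) to the a priori compactness of Tonelli minimizers.

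First, I would fix a reference point $x_0 \in M$ and set $g_n := h_0^{0,n}(x_0, x_0)$. Using the $1$-periodicity of $L$, two loops at $x_0$ of integer time lengths $n$ and $m$ concatenate into a loop of time length $n+m$, yielding subadditivity $g_{n+m} \leq g_n + g_m$. Superlinearity of $L$ gives the lower bound $g_n \geq -Bn$, and short curves on the compact manifold $M$ provide an upper bound $g_n \leq Cn$. Fekete's lemma then furnishes the finite limit $-\alpha_0 := \lim_n g_n/n = \inf_n g_n/n$. Since any two points of $M$ can be joined in unit time with uniformly bounded action (by $1$-periodicity of $L$), $\alpha_0$ is independent of $x_0$; this is the \Mane critical value.

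For (ii), the identity $h_c^{s,n+t}(x,y) = h_0^{s,n+t}(x,y) + c(n+t-s)$, the Fekete inequality $g_n + \alpha_0 n \geq 0$, and short connecting arcs from $x, y$ to $x_0$ yield $h_c^{s,n+t}(x,y) \geq (c-\alpha_0)n - C$, which tends to $+\infty$. For (i), I would select a subsequence $n_k$ with $g_{n_k}/n_k \to -\alpha_0$ and a loop at $x_0$ of time $n_k$ with near-minimal action; gluing it with short arcs from $x$ and to $y$ gives a test curve showing $h_c^{s, n_k + t}(x,y) \leq (c - \alpha_0 + \varepsilon) n_k + C'$, which tends to $-\infty$ whenever $c + \varepsilon < \alpha_0$.

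The main obstacle is (iii). Subadditivity only forces $g_n + \alpha_0 n \geq 0$ with sublinear growth in $n$, which does not a priori bound the Peierls barrier from above (the sequence could diverge slowly, like $\sqrt{n}$). To close this gap, I would invoke the a priori compactness Lemma \ref{APrioriCompactness}: minimizing curves on time intervals of length at least $1$ have their lift $(\gamma, \dot\gamma)$ confined to a fixed compact set $K \subset TM$. Choosing minimizing loops $\gamma_{n_k}$ at $x_0$ whose $\alpha_0$-corrected action tends to zero, the family is equi-Lipschitz; Arzelà--Ascoli, applied after suitable integer time shifts, produces a limiting closed orbit of the Euler--Lagrange flow along which $\int (L + \alpha_0)\, d\tau = 0$. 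Any point $\bar x$ on the projection of this orbit then satisfies $\liminf_n h_{\alpha_0}^{0,n}(\bar x, \bar x) \leq 0$, so the Peierls barrier is finite at $(\bar x, \bar x)$. The triangle-type inequality $h_{\alpha_0}^{s, n+t}(x,y) \leq h_{\alpha_0}^{s, s+1}(x, \bar x) + h_{\alpha_0}^{s+1, n+t-1}(\bar x, \bar x) + h_{\alpha_0}^{n+t-1, n+t}(\bar x, y)$, combined with the uniform Lipschitz continuity of $h_0^{s, t}$ in $(x, y)$ on unit-length intervals (again from the velocity bound on minimizers), then transfers finiteness and Lipschitz continuity of the Peierls barrier to all of $M \times M$.
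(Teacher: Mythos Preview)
The paper does not supply a proof of this proposition; it is stated as a known fact from weak KAM theory and used as a black box (the appendix opens by referring the reader to \cite{Representation} and \cite{fathi2008weak} for background). So there is no ``paper's proof'' to compare against, and your proposal must be judged on its own merits.

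Your treatment of (i) and (ii) via Fekete's lemma and concatenation is standard and correct. The real issue is in (iii). You correctly identify the obstacle---subadditivity alone does not prevent $g_n+\alpha_0 n$ from drifting to $+\infty$---but your proposed resolution does not close the gap. From a sequence of minimizing loops $\gamma_{n_k}$ at $x_0$ with $(g_{n_k}+\alpha_0 n_k)/n_k\to 0$, the a~priori compactness lemma and Arzel\`a--Ascoli (after integer time shifts) give you a complete Euler--Lagrange orbit, but there is no reason it should be \emph{periodic}. Without periodicity the statement ``$\int (L+\alpha_0)\,d\tau=0$ along this orbit'' has no obvious meaning, and the conclusion $\liminf_n h_{\alpha_0}^{0,n}(\bar x,\bar x)\le 0$ does not follow directly.

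What actually works is to pass to the occupation measures $\mu_k=\frac{1}{n_k}\int_0^{n_k}\delta_{(\tau\bmod 1,\gamma_k,\dot\gamma_k)}\,d\tau$ on $\mathbb{T}^1\times TM$; these converge weakly-$*$ (along a subsequence) to an invariant probability measure $\mu$ for the Euler--Lagrange flow with $\int(L+\alpha_0)\,d\mu=0$, i.e.\ a Mather minimizing measure. Poincar\'e recurrence applied to $\mu$ then produces a point $\bar x$ with $\liminf_n h_{\alpha_0}^{0,n}(\bar x,\bar x)\le 0$, after which your triangle-inequality propagation to all of $M\times M$ goes through as you describe. Alternatively, one can bypass Mather measures entirely by first establishing the existence of a weak KAM solution (a fixed point of $\mathcal{T}_-^{0,1}$), from which the uniform boundedness of $h_{\alpha_0}^{s,n+t}$ follows immediately; this is closer in spirit to how the appendix is organized.
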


We now define the Lax-Oleinik operators. The set $\mathcal{C}^0(M,\mathbb{R})$ denotes the set of continuous scalar maps $u_0 : M \to \mathbb{R}$.

\begin{defi}
	Fix two times $s<t$,
	\begin{enumerate}
		\item The \textit{negative Lax-Oleinik operator} $\mathcal{T}_{-,0}^{s,t} : \mathcal{C}^0(M,\mathbb{R}) \to \mathcal{C}^0(M,\mathbb{R})$ is defined by
		\begin{equation} \label{LO-}
			\begin{split}
				\mathcal{T}_{-,0}^{s,t} u_0(x) & 
				= \inf_{
				\begin{matrix}
					\gamma : [s,t] \rightarrow M \\
					t \mapsto x 
				\end{matrix}
				} \left\{  u_0(\gamma(s)) + \int_s^t L(\tau,\gamma(\tau), \dot{\gamma}(\tau)) \;d\tau \right\} \\
				& = \inf_{y \in M} \left\{  u_0(y) + h_0^{s,t}(y,x) \right\}\\
			\end{split}
		\end{equation}
		
		\item The \textit{positive Lax-Oleinik operator} $\mathcal{T}_{+,0}^{t,s} : \mathcal{C}^0(M,\mathbb{R}) \to \mathcal{C}^0(M,\mathbb{R})$ is defined by
		\begin{equation} \label{LO+}
			\begin{split}
				\mathcal{T}_{+,0}^{t,s} u_0(x) & 
				= \sup_{
				\begin{matrix}
					\gamma : [s,t] \rightarrow M \\
					s \mapsto x 
				\end{matrix}
				} \left\{  u_0(\gamma(t)) - \int_s^t L(\tau,\gamma(\tau), \dot{\gamma}(\tau)) \;d\tau \right\} \\
				& = \sup_{y \in M} \left\{  u_0(y) - h_0^{s,t}(x,y) \right\}\\
			\end{split}
		\end{equation}
		
		\item The \textit{full Lax-Oleinik operators} $\mathcal{T}_-^{s,t}, \mathcal{T}_+^{t,s} : \mathcal{C}^0(M,\mathbb{R}) \to \mathcal{C}^0(M,\mathbb{R})$ are defined by
		\begin{equation}\label{LOfull}
			\mathcal{T}_-^{s,t} u_0 = \mathcal{T}_{-,0}^{s,t} u_0 + \alpha_0.(t-s) \quad \text{and} \quad \mathcal{T}_+^{t,s} u_0 = \mathcal{T}_{+,0}^{t,s} u_0 - \alpha_0.(t-s)
		\end{equation}
	\end{enumerate}
	We omit the notation of the first time whenever it is null, i.e $\mathcal{T}_\pm ^{t} := \mathcal{T}_\pm ^{0,t}$ and $\mathcal{T}_\pm ^{t} := \mathcal{T}_\pm ^{0,t}$.
\end{defi}

As mentioned above, the negative Lax-Oleinik operator generates what is called the viscosity solutions $u(t,x) = \mathcal{T}_-^{0,t}u_0(x)$ of the Hamilton-Jacobi equation
\begin{equation} \label{HJ}
	\partial_tu + H(t,x,d_xu) = \alpha_0
\end{equation}
The positive Lax-Oleinik operator introduces a new type of weak-solutions that were unknown before the development of A. Fathi's weak-KAM theory.\\

We present a first dynamical property without proof.

\begin{prop} \label{NonExpansiveness}
	Fix two times $s<t$. The Lax-Oleinik operators $\mathcal{T}_-^{s,t}$ and $\mathcal{T}_+^{t,s}$ are non-expansive, i.e for all continuous scalar maps $u$ and $v$ in $\mathcal{C}^0(M, \mathbb{R})$, we have
	\begin{equation}
		\Vert \mathcal{T}_-^{s,t} u - \mathcal{T}_-^{s,t}v \Vert _\infty  \leq \Vert  u - v \Vert _\infty  \quad \text{and} \quad  \Vert \mathcal{T}_+^{t,s} u - \mathcal{T}_-^{t,s}v \Vert _\infty  \leq \Vert  u - v \Vert _\infty 
	\end{equation}
\end{prop}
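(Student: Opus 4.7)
The plan is to use the infimum/supremum representations of the Lax-Oleinik operators given in \eqref{LO-} and \eqref{LO+}, which reduce the claim to the elementary fact that inf and sup respect pointwise $\|\cdot\|_\infty$ perturbations. The additive normalization $\pm\alpha_0(t-s)$ in \eqref{LOfull} is an $x$-independent constant, so it cancels when comparing $\mathcal{T}_\pm^{s,t}u$ and $\mathcal{T}_\pm^{s,t}v$ at a common point; consequently it suffices to prove non-expansiveness of $\mathcal{T}_{-,0}^{s,t}$ and $\mathcal{T}_{+,0}^{t,s}$.

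For the negative operator, fix $u,v\in \mathcal{C}^0(M,\mathbb{R})$ and $x\in M$. From $u(y)\le v(y)+\|u-v\|_\infty$ for every $y\in M$, adding $h_0^{s,t}(y,x)$ on both sides and taking the infimum over $y$ yields
\begin{equation*}
\mathcal{T}_{-,0}^{s,t}u(x) \le \mathcal{T}_{-,0}^{s,t}v(x) + \|u-v\|_\infty.
\end{equation*}
Exchanging the roles of $u$ and $v$ and taking the supremum over $x$ gives $\|\mathcal{T}_{-,0}^{s,t}u-\mathcal{T}_{-,0}^{s,t}v\|_\infty\le\|u-v\|_\infty$, and then \eqref{LOfull} transfers this bound to $\mathcal{T}_-^{s,t}$.

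For the positive operator the argument is the mirror image: from $u(y)\ge v(y)-\|u-v\|_\infty$, subtracting $h_0^{s,t}(x,y)$ and taking the supremum over $y$ yields
\begin{equation*}
\mathcal{T}_{+,0}^{t,s}u(x) \ge \mathcal{T}_{+,0}^{t,s}v(x) - \|u-v\|_\infty,
\end{equation*}
and again exchanging $u$ and $v$, then taking the supremum over $x$, gives the non-expansiveness of $\mathcal{T}_{+,0}^{t,s}$, and hence of $\mathcal{T}_+^{t,s}$.

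There is no genuine obstacle here: the statement is an abstract consequence of the variational formulas and requires neither a minimizer (so no compactness or lower semicontinuity argument for existence), nor any regularity of $h_0^{s,t}$ beyond the fact that it enters additively into the variational expression. The only point to be mindful of is the sign convention so that the same monotonicity argument in the inf formulation transposes correctly to the sup formulation for $\mathcal{T}_+$.
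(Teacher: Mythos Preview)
Your argument is correct and is the standard elementary proof: monotonicity of $\inf$ and $\sup$ under pointwise shifts by a constant immediately yields the $1$-Lipschitz bound for $\mathcal{T}_{\pm,0}$, and the additive constant $\pm\alpha_0(t-s)$ in \eqref{LOfull} cancels in the difference. Note that the paper does not actually prove this proposition---it is stated explicitly ``without proof''---so there is nothing to compare against; your write-up fills that gap cleanly.
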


Although analogous to the case of the negative Lax-Oleinik operator, the proofs of the following propositions are, to our knowledge, lacking in the literature. Therefore, we will present them in full detail.

\begin{prop} \label{WeakKAM+}
	Fix a point $y \in M$. The map $u(t,x) = - h^{t,\infty}(x,y) + \alpha_0.t$ is a \textit{positive weak-KAM solution} of the Hamilton-Jacobi equation. More precisely, for all times $s<t$, $\mathcal{T}_+^{t,s}u(t, \cdot) = u(s, \cdot)$ and $\mathcal{T}_+^{0,-1} u(0, \cdot) = u(0,\cdot)$.
\end{prop}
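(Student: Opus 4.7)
The plan is to establish two properties: (a) the backward semigroup identity $\mathcal{T}_+^{t,s} u(t,\cdot) = u(s,\cdot)$ for every $s<t$, and (b) the time-$1$-periodicity $u(t+1,\cdot) = u(t,\cdot)$. Together these yield the second claim, since (a) with $(t,s)=(0,-1)$ gives $\mathcal{T}_+^{0,-1}u(0,\cdot) = u(-1,\cdot)$ and (b) gives $u(-1,\cdot) = u(0,\cdot)$. Property (b) is short: the $1$-periodicity of $L$ (inherited from $H$) forces $h_0^{t+1,n+1}(x,y) = h_0^{t,n}(x,y)$, so substituting and re-indexing $m=n-1$ inside the defining liminf yields a shift $h^{t+1,\infty}(x,y) = h^{t,\infty}(x,y) + \alpha_0$, which combines with the $+\alpha_0 t$ term in the definition of $u$ to give $u(t+1,x)=u(t,x)$.

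For (a), unravelling the definitions of $\mathcal{T}_+^{t,s}$ and $u$ reduces the claim to the semigroup identity
\begin{equation*}
    \inf_{z \in M} \bigl\{\, h_0^{s,t}(x,z) + h^{t,\infty}(z,y) \,\bigr\} \;=\; h^{s,\infty}(x,y).
\end{equation*}
The direction $\geq$ is immediate: for any $z \in M$ the concatenation bound $h_0^{s,n}(x,y) \leq h_0^{s,t}(x,z) + h_0^{t,n}(z,y)$ survives addition of $\alpha_0 n$ and passage to $\liminf_n$, so $h^{s,\infty}(x,y) \leq h_0^{s,t}(x,z) + h^{t,\infty}(z,y)$; then take the infimum over $z$. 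For the reverse direction I will extract a subsequence $n_k \to \infty$ realizing $h_0^{s,n_k}(x,y) + \alpha_0 n_k \to h^{s,\infty}(x,y)$ and, for each $k$, pick a Tonelli minimizer $\gamma_k : [s, n_k] \to M$ of the action from $x$ to $y$. Setting $z_k := \gamma_k(t)$, compactness of $M$ provides a further subsequence with $z_k \to z^* \in M$. The decomposition $h_0^{s,n_k}(x,y) = h_0^{s,t}(x,z_k) + h_0^{t,n_k}(z_k,y)$ together with continuity of $h_0^{s,t}(x,\cdot)$ forces $h_0^{t,n_k}(z_k,y) + \alpha_0 n_k \to h^{s,\infty}(x,y) - h_0^{s,t}(x,z^*)$. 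To replace $z_k$ by $z^*$ inside the potential, I invoke Mather's a priori compactness (Lemma \ref{APrioriCompactness}) to obtain a Lipschitz constant $K$ for $h_0^{t,n_k}(\cdot,y)$ that is uniform in $k$ (as soon as $n_k - t$ is bounded below), whence $|h_0^{t,n_k}(z_k,y) - h_0^{t,n_k}(z^*,y)| \leq K\, d(z_k, z^*) \to 0$; taking $\liminf_k$ of $h_0^{t,n_k}(z^*,y) + \alpha_0 n_k$ then yields $h^{t,\infty}(z^*,y) \leq h^{s,\infty}(x,y) - h_0^{s,t}(x,z^*)$, which gives the required $\leq$ after taking the infimum over $z$.

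The main obstacle is precisely this interchange of $\inf_z$ and $\liminf_n$ in the reverse direction of (a): the abstract inequality $\liminf_n \inf_z \leq \inf_z \liminf_n$ goes the wrong way, so equality genuinely requires the joint input of Tonelli existence of action minimizers, compactness of $M$ at the intermediate time $t$, and a uniform-in-horizon Lipschitz estimate on the potentials $h_0^{t,n}(\cdot,y)$ — the last being exactly what Mather's a priori compactness delivers.
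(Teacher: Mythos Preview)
Your proposal is correct and follows essentially the same route as the paper: reduce to the semigroup identity $\inf_z\{h^{s,t}(x,z)+h^{t,\infty}(z,y)\}=h^{s,\infty}(x,y)$, get $\ge$ by concatenation, and get $\le$ by extracting a subsequence realizing the $\liminf$, choosing intermediate points $z_k\to z^*$ by compactness of $M$, and passing to the limit. The only genuine difference is in how you justify the limit step $\liminf_k h^{t,n_k}(z_k,y)\ge h^{t,\infty}(z^*,y)$: you make it self-contained by invoking Mather's a~priori compactness (Lemma~\ref{APrioriCompactness}) to obtain an equi-Lipschitz bound on $h_0^{t,n_k}(\cdot,y)$, whereas the paper appeals to the Peierls-barrier properties of \cite{Representation}, Propositions~5.2--5.3, which encapsulate precisely this equicontinuity. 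Your choice of $z_k=\gamma_k(t)$ via Tonelli minimizers is equivalent to the paper's choice of $z_n$ as minimizers of $z\mapsto h^{s,t}(x,z)+h^{t,k_n}(z,y)$, by dynamic programming.
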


\begin{proof}
	Let $v(t,x) = -h^{t,\infty}(x,y)$. We claim that for all times $s<t$, $\mathcal{T}_{+,0}^{t,s} v(t, \cdot) = v(s, \cdot)$. Indeed, we prove it establishing a double inequality. 
	
	Fix two times $s<t$ and a point $x \in M$. Let $k_n$ be an increasing sequence of integers such that $h^{s,\infty}(x,y) = \lim_n h^{s,k_n}(x,y)$. For every integer $n$, the compactness of $M$ ensures the existence of a point $z_n \in M$ realizing the following infimum
	\begin{equation*}
		h^{s,t}(x,z_n) + h^{t,k_n}(z_n,y) = \inf_{z \in M} \{ h^{s,t}(x,z) + h^{t,k_n}(z,y) \}
	\end{equation*}		
	Moreover, it is easy to see that
	\begin{equation*}
		\inf_{z \in M} \{ h^{s,t}(x,z) + h^{t,k_n}(z,y) \} = h^{s,k_n}(x,y)
	\end{equation*}
	Hence, after assuming, up to extraction, that the sequence $z_n$ converges to a point $z \in M$, we deduce that
	\begin{align*}
		h^{s,\infty}(x,y) & = \lim\limits_n h^{s,k_n}(x,y) = \lim\limits_n h^{s,t}(x,z_n) + \liminf_n   h^{t,k_n}(z_n,y)  \\
		&= h^{s,t}(x,z) + \liminf_n  h^{t,k_n}(z_n,y) \\
		& \geq  h^{s,t}(x,z) + h^{t,\infty}(z,y)
	\end{align*}	
	where we mainly used properties of Peierls barrier that can be found in Proposition 5.2 and 5.3 of \cite{Representation}. This yields a first inequality
	\begin{align*}
		v(s,x) = - h^{s,\infty}(x,y) & \leq  - h^{t,\infty}(z,y) - h^{s,t}(x,z)  \\
		& \leq \mathcal{T}_{+,0}^{t,s} v(t)(x) = \sup_{z'\in M} \big\{ - h^{t,\infty}(z',y) - h^{s,t}(x,z') \big\}
	\end{align*}
	
	We now establish the inverse inequality. We know that for any point $z$ of $M$ and any large integer $n >s$, 
	\begin{align*}
		h^{s,n}(x,y) \leq h^{s,t}(x,z) + h^{t,n}(z,y)
	\end{align*}
	Taking the liminf on $n$, we get 
	\begin{equation*}
		h^{s,\infty}(x,y) \leq h^{s,t}(x,z) +  h^{t,\infty}(z,y)
	\end{equation*}
	which holds for all points $z$ of $M$. Changing the sign and taking the supremum on $z$ gives the desired inequality
	\begin{equation*}
		v(s,x) = - h^{s,\infty}(x,y)  \geq \mathcal{T}_{+,0}^{t,s} v(t)(x) = \sup_{z\in M} \big\{ - h^{t,\infty}(z,y) - h^{s,t}(x,z) \big\}
	\end{equation*} \\
		
	We have shown that $\mathcal{T}_{+,0}^{t,s} v(t, \cdot) = v(s, \cdot)$. Let us establish the desired properties on the map $u(t,x)$. We have
	\begin{align*}
		\mathcal{T}_+^{t,s}u(t, \cdot) &= \mathcal{T}_{+,0}^{t,s}u(t, \cdot) - \alpha_0.(t-s) = \mathcal{T}_{+,0}^{t,s} \big( u(t, \cdot) - \alpha_0.t \big) +\alpha_0.s \\
		&= \mathcal{T}_{+,0}^{t,s} v(t, \cdot) +\alpha_0.s = v(s, \cdot) + \alpha_0.s  = u(s, \cdot)
	\end{align*}
	Additionally, we have
	\begin{align*}
		v(t+1,x) &= -h^{t+1,\infty}(x,y) = - \liminf_n h^{t+1,n}(x,y) = - \liminf_n h^{t,n-1}(x,y) = -h^{t,\infty}(x,y) = v(t,x) \\
	\end{align*}
	where we used the time periodicity of the Lagrangian $L$. This yields
	\begin{align*}
		\mathcal{T}_+^{0,-1} u(0, \cdot) = \mathcal{T}_{+,0}^{0,-1}u(t, \cdot) - \alpha_0 = u(-1, \cdot) - \alpha_0 = v(-1, \cdot) = v(0, \cdot) = u(0,\cdot)
	\end{align*}
\end{proof}

\begin{prop} \label{BoundedLO}
	For any time $t \in \mathbb{R}$ and any scalar map $u \in \mathcal{C}^0(M, \mathbb{R})$, the family of maps $\big( \mathcal{T}_+^{t,s}u = \mathcal{T}_{+,0}^{t,s}u -\alpha_0.(t-s) \big)_{s<t}$ is uniformly bounded in the $C^0$ topology.
\end{prop}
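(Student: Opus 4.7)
The plan is to control $\mathcal{T}_+^{t,s}u$ by comparing it with a fixed positive weak-KAM solution whose $C^0$ norm stays uniformly bounded as $s$ varies over $(-\infty,t)$, and then to invoke the non-expansiveness of $\mathcal{T}_+^{t,s}$ provided by Proposition \ref{NonExpansiveness}.

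Concretely, I would fix a point $y_0 \in M$ and consider the function $w(t,x) := -h^{t,\infty}(x,y_0)$ built from the Peierls barrier. Proposition \ref{ManeCritValueProp} ensures that each slice $w(t,\cdot)$ is Lipschitz on $M$. Two properties of $w$ are needed. First, the time-one periodicity of the Lagrangian $L$ forces $w(t+1,\cdot) = w(t,\cdot)$: the change of variables $\tau \mapsto \tau+1$ in \eqref{Potential0} yields $h_{\alpha_0}^{s+1,n+1}(x,y)=h_{\alpha_0}^{s,n}(x,y)$, an identity that passes to the $\liminf$; combined with compactness of $M$ this produces $W := \sup_{(s,x)\in\mathbb{R}\times M} |w(s,x)| < +\infty$. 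Second, $w$ is a positive weak-KAM solution in the sense that $\mathcal{T}_+^{t,s}w(t,\cdot)=w(s,\cdot)$ for all $s<t$; this is the content of Proposition \ref{WeakKAM+} applied to $y=y_0$, obtained by combining the subadditivity $h_{\alpha_0}^{s,n}(x,y_0) \leq h_{\alpha_0}^{s,t}(x,z) + h_{\alpha_0}^{t,n}(z,y_0)$ with a compactness argument that exchanges $\liminf_n$ with $\inf_z$.

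With $w$ in place, the conclusion is immediate. For any $u \in \mathcal{C}^0(M,\mathbb{R})$ and $s<t$, the non-expansiveness of $\mathcal{T}_+^{t,s}$ combined with $\mathcal{T}_+^{t,s}w(t,\cdot)=w(s,\cdot)$ gives
$$\Vert \mathcal{T}_+^{t,s}u - w(s,\cdot)\Vert_\infty \;\leq\; \Vert u - w(t,\cdot)\Vert_\infty \;\leq\; \Vert u\Vert_\infty + W,$$
so the triangle inequality yields $\Vert \mathcal{T}_+^{t,s}u\Vert_\infty \leq \Vert u\Vert_\infty + 2W$, a bound that does not depend on $s$.

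The only delicate point, and the reason the argument must be arranged this way, is to insist on comparing against the genuinely $1$-periodic function $w = -h^{t,\infty}(\cdot,y_0)$ rather than the drifting function $-h^{t,\infty}(\cdot,y_0) + \alpha_0 t$ that appears in the statement of Proposition \ref{WeakKAM+}: adding the linear term $\alpha_0 t$ would make the reference $C^0$-norm blow up as $s \to -\infty$ and ruin the uniform bound. Everything else in the proof is a one-line consequence of the already-established Propositions \ref{NonExpansiveness} and \ref{WeakKAM+}.
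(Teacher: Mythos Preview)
Your argument is correct and follows the same strategy as the paper: compare against a positive weak-KAM solution built from the Peierls barrier and invoke the non-expansiveness of $\mathcal{T}_+^{t,s}$. Your packaging is in fact a little cleaner than the paper's, since you work directly with the periodic family $w(t,\cdot)=-h^{t,\infty}(\cdot,y_0)$ and the identity $\mathcal{T}_+^{t,s}w(t,\cdot)=w(s,\cdot)$, whereas the paper starts from the single slice $w_0=w(0,\cdot)$ and threads through ceiling functions to land in the compact window $\tau\in[-1,0]$. The only point that deserves one more word is the bound $W=\sup_{(s,x)}|w(s,x)|<\infty$: periodicity reduces this to $s\in[0,1]$, but you still need a uniform bound there, which comes from continuity of the Peierls barrier in the time variable (a standard fact, and exactly what the paper itself invokes when it appeals to the continuity of $\tau\mapsto\mathcal{T}_+^{0,\tau}w_0$).
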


\begin{proof}
	This is mainly due to the non-expansiveness of the Lax-Oleinik operator $\mathcal{T}_+^{t,s}$ and to the existence of positive weak-KAM solutions. Fix a time $t \in \mathbb{R}$ and a scalar map $u \in \mathcal{C}^0(M, \mathbb{R})$ and let $w(\tau,x) = - h^{\tau,\infty}(x,y) + \alpha_0.\tau$ be a positive weak solution with initial data $w_0 = w(0, \cdot)$, introduced in Proposition \ref{WeakKAM+}. The Proposition \ref{NonExpansiveness} yields for all time $s \leq t$, 
	\begin{align*}
		\Vert  \mathcal{T}_+^{t,s}u - \mathcal{T}_+^{t,s} \mathcal{T}_+^{ \lceil t \rceil , t}w_0 \Vert _\infty \leq \Vert  u-\mathcal{T}_+^{ \lceil t \rceil , t}w_0 \Vert _\infty
	\end{align*}
	where $\lceil \cdot \rceil$ stand for the ceil map. Besides, we know from the definition of positive weak-KAM solutions that
	\begin{align*}
		\mathcal{T}_+^{t,s} \mathcal{T}_+^{ \lceil t \rceil , t}w_0 =  \mathcal{T}_+^{ \lceil t \rceil , s}w_0 =  \mathcal{T}_+^{ \lceil s \rceil , s} \mathcal{T}_+^{ \lceil t \rceil , \lceil s \rceil} w_0 = \mathcal{T}_+^{ \lceil s \rceil , s} w_0 = \mathcal{T}_+^{0, s- \lceil s \rceil} w_0 
	\end{align*}
	Therefore, we obtain a uniform bound using the continuity in time $\tau$ of $\mathcal{T}_+^{\tau} w_0$ (see \cite{Representation}) as follows
	\begin{align*}
		\Vert  \mathcal{T}_+^{t,s}u \Vert _\infty & \leq \Vert  u-\mathcal{T}_+^{ \lceil t \rceil , t}w_0 \Vert _\infty + \Vert \mathcal{T}_+^{t,s} \mathcal{T}_+^{ \lceil t \rceil , t}w_0\Vert _\infty \\
		&= \Vert u-\mathcal{T}_+^{ \lceil t \rceil , t}w_0  \Vert _\infty + \Vert \mathcal{T}_+^{0, s- \lceil s \rceil} w_0  \Vert _\infty \\
		& \leq \Vert  u-\mathcal{T}_+^{ \lceil t \rceil , t}w_0 \Vert _\infty + \sup_{\tau \in [-1,0]} \Vert \mathcal{T}_+^{0,\tau}w_0 \Vert \Vert _\infty < +\infty
	\end{align*}
\end{proof}

\paragraph{Acknowlegements :} The author is sincerely grateful to Marie-Claude Arnaud for her thorough reading, and to Jacques Féjoz for the many enlightening discussions that greatly contributed to this work.

\bibliographystyle{alpha}
\addcontentsline{toc}{section}{References}
\bibliography{Biblio}

\begin{thebibliography}{AOdS18}

\bibitem[AOdS18]{MR3860396}
Lino Amorim, Yong-Geun Oh, and Joana~Oliveira dos Santos.
\newblock Exact {L}agrangian submanifolds, {L}agrangian spectral invariants and
  {A}ubry-{M}ather theory.
\newblock {\em Math. Proc. Cambridge Philos. Soc.}, 165(3):411--434, 2018.

\bibitem[Arn05]{MR2150356}
M.-C. Arnaud.
\newblock Convergence of the semi-group of {L}ax-{O}leinik: a geometric point
  of view.
\newblock {\em Nonlinearity}, 18(4):1835--1840, 2005.

\bibitem[Arn10]{MR2738994}
Marie-Claude Arnaud.
\newblock On a theorem due to {B}irkhoff.
\newblock {\em Geom. Funct. Anal.}, 20(6):1307--1316, 2010.

\bibitem[AV17]{MR3674224}
Marie-Claude Arnaud and Andrea Venturelli.
\newblock A multidimensional {B}irkhoff theorem for time-dependent {T}onelli
  {H}amiltonians.
\newblock {\em Calc. Var. Partial Differential Equations}, 56(4):Paper No. 122,
  27, 2017.

\bibitem[BdS12]{MR2860422}
Patrick Bernard and Joana~Oliveira dos Santos.
\newblock A geometric definition of the {M}a\~{n}\'{e}-{M}ather set and a
  theorem of {M}arie-{C}laude {A}rnaud.
\newblock {\em Math. Proc. Cambridge Philos. Soc.}, 152(1):167--178, 2012.

\bibitem[Bir22]{MR1555175}
George~D. Birkhoff.
\newblock Surface transformations and their dynamical applications.
\newblock {\em Acta Math.}, 43(1):1--119, 1922.

\bibitem[BP92]{MR1157317}
Misha Bialy and Leonid Polterovich.
\newblock Hamiltonian systems, {L}agrangian tori and {B}irkhoff's theorem.
\newblock {\em Math. Ann.}, 292(4):619--627, 1992.

\bibitem[Bru91]{MR1115746}
Marco Brunella.
\newblock On a theorem of {S}ikorav.
\newblock {\em Enseign. Math. (2)}, 37(1-2):83--87, 1991.

\bibitem[Cha91]{MR1094198}
Marc Chaperon.
\newblock Lois de conservation et g\'{e}om\'{e}trie symplectique.
\newblock {\em C. R. Acad. Sci. Paris S\'{e}r. I Math.}, 312(4):345--348, 1991.

\bibitem[Cha25a]{Representation}
Skander Charfi.
\newblock Representation of global viscosity solutions for tonelli
  hamiltonians, 2025.

\bibitem[Cha25b]{Recurrent}
Skander Charfi.
\newblock A smooth, recurrent, non-periodic viscosity solution of the
  hamilton-jacobi equation, 2025.

\bibitem[CI99]{MR1720372}
Gonzalo Contreras and Renato Iturriaga.
\newblock {\em Global minimizers of autonomous {L}agrangians}.
\newblock 22$\sp {\rm o}$ Col\'{o}quio Brasileiro de Matem\'{a}tica. [22nd
  Brazilian Mathematics Colloquium]. Instituto de Matem\'{a}tica Pura e
  Aplicada (IMPA), Rio de Janeiro, 1999.

\bibitem[Cla83]{MR0709590}
Frank~H. Clarke.
\newblock {\em Optimization and nonsmooth analysis}.
\newblock Canadian Mathematical Society Series of Monographs and Advanced
  Texts. John Wiley \& Sons, Inc., New York, 1983.
\newblock A Wiley-Interscience Publication.

\bibitem[DCR23]{MR4597700}
M\'{a}rio~J. Dias~Carneiro and Rafael~O. Ruggiero.
\newblock On the graph theorem for {L}agrangian invariant tori with totally
  irrational invariant sets.
\newblock {\em Manuscripta Math.}, 171(3-4):423--436, 2023.

\bibitem[Fat97]{MR1451248}
Albert Fathi.
\newblock Th\'{e}or\`eme {KAM} faible et th\'{e}orie de {M}ather sur les
  syst\`emes lagrangiens.
\newblock {\em C. R. Acad. Sci. Paris S\'{e}r. I Math.}, 324(9):1043--1046,
  1997.

\bibitem[Fat98]{MR1650261}
Albert Fathi.
\newblock Sur la convergence du semi-groupe de {L}ax-{O}leinik.
\newblock {\em C. R. Acad. Sci. Paris S\'{e}r. I Math.}, 327(3):267--270, 1998.

\bibitem[Fat08]{fathi2008weak}
A.~Fathi.
\newblock {\em The Weak KAM Theorem in Lagrangian Dynamics}.
\newblock (Book in Preparation). Cambridge Studies in Advanced Mathematics.
  Cambridge University Press, 2008.

\bibitem[FM07]{MR2346451}
Albert Fathi and Ezequiel Maderna.
\newblock Weak {KAM} theorem on non compact manifolds.
\newblock {\em NoDEA Nonlinear Differential Equations Appl.}, 14(1-2):1--27,
  2007.

\bibitem[Her83]{MR0728564}
Michael-R. Herman.
\newblock {\em Sur les courbes invariantes par les diff\'{e}omorphismes de
  l'anneau. {V}ol. 1}, volume 103-104 of {\em Ast\'{e}risque}.
\newblock Soci\'{e}t\'{e} Math\'{e}matique de France, Paris, 1983.
\newblock With an appendix by Albert Fathi, With an English summary.

\bibitem[Her89]{MR1067380}
Michael-R. Herman.
\newblock In\'{e}galit\'{e}s ``a priori'' pour des tores lagrangiens invariants
  par des diff\'{e}omorphismes symplectiques.
\newblock {\em Inst. Hautes \'{E}tudes Sci. Publ. Math.}, (70):47--101, 1989.

\bibitem[Hum08]{humil2008}
Vincent Humilière.
\newblock {\em Continuité en topologie symplectique}.
\newblock PhD thesis, 2008.
\newblock Thèse de doctorat dirigée par Viterbo, Claude Mathématiques
  Palaiseau, Ecole polytechnique 2008.

\bibitem[KO97]{MR1476996}
Y.~Katznelson and D.~S. Ornstein.
\newblock Twist maps and {A}ubry-{M}ather sets.
\newblock In {\em Lipa's legacy ({N}ew {Y}ork, 1995)}, volume 211 of {\em
  Contemp. Math.}, pages 343--357. Amer. Math. Soc., Providence, RI, 1997.

\bibitem[Mat91]{MR1109661}
John~N. Mather.
\newblock Action minimizing invariant measures for positive definite
  {L}agrangian systems.
\newblock {\em Math. Z.}, 207(2):169--207, 1991.

\bibitem[Oh97]{MR1484890}
Yong-Geun Oh.
\newblock Symplectic topology as the geometry of action functional. {I}.
  {R}elative {F}loer theory on the cotangent bundle.
\newblock {\em J. Differential Geom.}, 46(3):499--577, 1997.

\bibitem[Ott15]{unknown}
Alberto Ottolenghi.
\newblock Solutions généralisées pour l'équation de hamilton-jacobi dans le
  cas d'évolution - pure mathematics postgraduate diploma dissertation 1992,
  11 2015.

\bibitem[PPS03]{MR2025275}
Gabriel~P. Paternain, Leonid Polterovich, and Karl~Friedrich Siburg.
\newblock Boundary rigidity for {L}agrangian submanifolds, non-removable
  intersections, and {A}ubry-{M}ather theory.
\newblock volume~3, pages 593--619, 745. 2003.
\newblock Dedicated to Vladimir I. Arnold on the occasion of his 65th birthday.

\bibitem[Sib98]{MR1659220}
Karl~Friedrich Siburg.
\newblock A dynamical systems approach to {B}irkhoff's theorem.
\newblock {\em Enseign. Math. (2)}, 44(3-4):291--303, 1998.

\bibitem[Sik87]{MR0882965}
Jean-Claude Sikorav.
\newblock Probl\`emes d'intersections et de points fixes en g\'{e}om\'{e}trie
  hamiltonienne.
\newblock {\em Comment. Math. Helv.}, 62(1):62--73, 1987.

\bibitem[Th{\'{e}}99]{MR1709692}
David Th{\'{e}}ret.
\newblock A complete proof of {V}iterbo's uniqueness theorem on generating
  functions.
\newblock {\em Topology Appl.}, 96(3):249--266, 1999.

\bibitem[Vit92]{MR1157321}
Claude Viterbo.
\newblock Symplectic topology as the geometry of generating functions.
\newblock {\em Math. Ann.}, 292(4):685--710, 1992.

\bibitem[Vit96]{MR1604386}
C.~Viterbo.
\newblock Solutions of {H}amilton-{J}acobi equations and symplectic geometry.
  {A}ddendum to: {\it {S}\'{e}minaire sur les \'{E}quations aux
  {D}\'{e}riv\'{e}es {P}artielles. 1994--1995} [\'{E}cole {P}olytech.,
  {P}alaiseau, 1995; {MR}1362548 (96g:35001)].
\newblock In {\em S\'{e}minaire sur les \'{E}quations aux {D}\'{e}riv\'{e}es
  {P}artielles, 1995--1996}, S\'{e}min. \'{E}qu. D\'{e}riv. Partielles, page~8.
  \'{E}cole Polytech., Palaiseau, 1996.

\end{thebibliography}

\end{document}